\newtheorem{theo}{Theorem}[section]
\newtheorem{lemm}[theo]{Lemma}
\newtheorem{prop}[theo]{Proposition}
\newtheorem{rema}[theo]{Remark}
\newtheorem{defi}[theo]{Definition}
\renewcommand{\Im}{\operatorname{Im}}
\numberwithin{equation}{section}
\begin{document}

\title[Modal approximation for nano-bubbles  in elastic materials]{
 Resonant modal approximation of  time-domain elastic scattering from nano-bubbles  in  elastic materials}

\author{Bochao Chen}
\address{School of
Mathematics and Statistics, Center for Mathematics and
Interdisciplinary Sciences, Northeast Normal University, Changchun, Jilin 130024, China. }
\email{chenbc758@nenu.edu.cn}

\author{Yixian Gao}
\address{School of
Mathematics and Statistics, Center for Mathematics and
Interdisciplinary Sciences, Northeast Normal University, Changchun, Jilin 130024, China. }
\email{gaoyx643@nenu.edu.cn}

\author{Yong  Li}
\address{School of Mathematics, Jilin University, Changchun, Jilin 130012;
School of Mathematics and Statistics, Center for Mathematics and
Interdisciplinary Sciences, Northeast Normal University, Changchun, Jilin
130024, China}
\email{yongli@nenu.edu.cn}

\author{Hongyu Liu}
\address{Department of Mathematics, City University of Hong Kong, Kowloon, Hong Kong, China }
\email{hongyu.liuip@gmail.com, hongyliu@cityu.edu.hk}

\thanks{The research of BC was supported in part by  NSFC grants (project number, 11901232) and the Fundamental Research Funds for the Central Universities (project number, 2412022QD032). The research of YG was is supported by NSFC grants (project numbers, 11871140 and 12071065) and  FRFCU2412019BJ005. The research of YL was supported in part by NSFC grants (project number, 12071175).
The research of HL was supported by Hong Kong RGC General Research Funds (project numbers, 11311122,
11300821 and 12301420) and the NSFC-RGC Joint Research Grant (project number,
 N\_CityU101/21), and the ANR/RGC Joint Research Grant (project number, A\_CityU203/19).}

\subjclass[2010]{74J20, 35B34, 47G40}

\keywords{Minnaert resonance, Bubbly elastic medium,  Time domain, Neumann-Poincar\'{e} operator, Modal analysis}
%Asymptotic analysis}

\begin{abstract}
This paper is devoted to establishing the resonant modal expansion  of the low-frequency part of the scattered
field for acoustic bubbles embedded in  elastic materials in the time domain. Due to the nano-bubble with damping, Minnaert resonance can be induced at certain discrete resonant frequencies, which forms the fundamental basis of effectively constructing elastic metamaterials via the composite material theory. There are two major contributions in this work. First, we ansatz a special form of the density, approximate the incident field with a finite number of modes,   and then obtain  an expansion with a finite number of modes for the acoustic-elastic wave scattering in the time-harmonic regime. Second, we  show that  the low-frequency part of the scattered field in the time domain can be well approximated by using the resonant modal expansion with sharp error estimates. Interestingly, we find that the $0$-th mode is the main contribution to the resonant modal expansion.

\end{abstract}

\maketitle

\section{Introduction}

\subsection{Background and motivation}

In this paper, we aim at quantitatively understanding the elastic wave propagation in a bubbly elastic medium. Consider an air bubble with damping of nanoscale. For an outgoing elastic point source,  its impingement on the air bubble  generates the scattering phenomenon. It can be mathematical viewed as a time-dependent interaction problem in three-dimensional space between an elastic wave and an acoustic wave. The elastic
and acoustic fields are coupled together via the transmission conditions across the boundary.  This leads to a coupled PDE system. The purpose of this paper is to establish a resonance-type expansion for the low-frequency part of the scattered
field for bubble-elastic structures in the time domain.

Concerning the resonance of the bubbly elastic medium, we have to refer to the Minnaert resonance, which is a widely known acoustic phenomenon. The Minnaert resonance is a kind of low-frequency resonance in which the wavelength is much larger than the size of the bubble; see \cite{xvi1933musical}.  It was first discovered for acoustic wave propagation in liquids with air bubbles \cite{commander1989linear,hwang2000low}. The exceptional acoustic properties can be used to design new materials. Nevertheless, as mentioned in \cite{leroy2009design}, since the bubbles are unstable in the
liquid, it is difficult to control. In order to overcome this difficulty, soft elastic materials (namely, the shear modulus is small) will substitute for liquids. In fact, a small volume fraction of air bubbles in a soft elastic medium can have a significant influence on the effective propagation of the elastic wave in the medium due to a certain resonance phenomenon \cite{calvo2012low,leroy2015superabsorption}. Recently, several mathematical theories have been derived and effectively used
in understanding the Minnaert resonances of bubbles. Due to the sharp contrast of the density between the air bubble and the liquid or elastic medium, the authors in \cite{Ammari2018Minnaert,Li2022Minnaert} provided  a rigorous and systematic mathematical study of the Minnaert resonance in the case of a single bubble immersed in liquids and soft elastic materials, respectively. Moreover, one significant application of the Minnaert resonance and bubbly material structures is that they can be used to effectively realize a variety of metamaterials with exotic material properties via the theory of composite materials; see e.g. \cite{AZ,Li2022Minnaert} and the references cited therein for more related background discussions.

A Several important literatures mentioned
above on typical bubble-elastic structures are confined to the time-harmonic setting.  However, the model setting in the physical setup depends not only on the space, but also on the time. We refer to the recent works  \cite{Ammari2022Asympototic,Baldassari2021Modal} and \cite{CGL} on time-dependent modal approximations as well. In \cite{Ammari2022Asympototic,Baldassari2021Modal}, the authors gave the plasmon resonant expansion of the electromagnetic field scattered
by nanoparticles with dispersive material parameters placed in a homogeneous medium in the low-frequency regime. In \cite{CGL}, the three authors of the present article established the polariton resonant expansion of the elastic field scattered by elastic metamaterial nanoparticles in the low-frequency regime.
%The methods developed therein are based on studying the static system together with the perturbation theory.
Motivated by the aforementioned physical and mathematical studies, we consider
in this paper the elastic wave propagation in a bubbly elastic medium in the time domain.  Specifically, we show that the low-frequency part of the elastic scattered field can be well approximated by the Minnaert resonant modal expansion. In fact, the corresponding resonance is induced by the nano-bubble with damping, which does not depends on, in addition to  the high contrast
of the density, the high contrast
of  the shear modulus and the compression modulus as seen in \cite{Li2022Minnaert}. Nevertheless, for terminological convenience, we still
call it the Minnaert resonance in the present paper.

In addition, we highlight several mathematical and technical developments achieved in the current study. First, we are mainly concerned with a coupled-physics PDE system which is technically more involved than single-physics PDE systems. Second, the Minnaert resonance plays a major role in our modal analysis which is of a different physical nature from the polariton resonance in \cite{CGL}, though they might be connected via the effective medium theory. In particular, we would like to emphasize that the wavenumber is involved in the transmission conditions of the Fourier-transformed acoustic-elastic system, which poses substantial challenges to both the resonant and modal analyses. To derive the Minnaert resonance, we must require that the order of asymptotic expansions of the elastic field with respect to the wavenumber is  higher  than the one given by the perturbation theory used in the literatures \cite{Ammari2022Asympototic,Baldassari2021Modal,CGL}. Hence we shall mainly confine our study within the radial geometry. Nevertheless, this presents sufficient technical challenges and moreover is physically unobjectionable since in the effective composite medium theory, the radial geometry is frequently implemented in the construction of elastic metamaterials based on bubble-elastic structures (cf. \cite{SMG,Li2022Minnaert}). Indeed, the results derived in this article may have practical implications to the effective construction of time-modulated elastic metamaterials (cf. \cite{TX}), which we shall explore in a forthcoming work. Finally, we would like to mention an interesting discovery that the $0$-th mode is only needed to reconstruct the information of the low-frequency part of the scattered field in the time domain. Our findings could
thus bring fresh insight to design metamaterials for applications (cf. \cite{Pierre2014}).

\subsection{Problem formulation}

Focusing on the
mathematics, but not the physics, we present the problem formulation of our study. Consider a nano-bubble $D$ of the following form:
\begin{align*}%\label{eq:D}
D=\epsilon B+\boldsymbol z,
\end{align*}
where $\epsilon\in\mathbb{R}_+$ with $\epsilon\ll 1$, the point $\boldsymbol z=(z_1,z_2,z_3)\in\mathbb{R}^3$ and $B$ is a unit ball containing the origin in $\mathbb{R}^3$. The medium configuration of the air bubble is characterised by the damping coefficient $\gamma>0$, the density $\rho_b\in\mathbb{R}_+$, and the bulk modulus $\kappa\in\mathbb{R}_+$. We define $c_b=\sqrt{\kappa/\rho_b}$ which stands for the velocity of the acoustic wave inside $D$. We assume that the background $\mathbb{R}^3\backslash\overline{D}$ is occupied by a regular and isotropic elastic material parameterized by the density $\rho_e\in\mathbb{R}_+$, and the Lam\'{e} constant $(\tilde{\lambda},\tilde{\mu})$ with
\begin{align}\label{mu}
\mathrm{(i)}~\tilde{\mu}>0, \qquad\text{and}\qquad\mathrm{(ii)}~3\tilde{\lambda}+2\tilde{\mu}>0.
\end{align}

In the following, we let $\tilde{\boldsymbol x}=(\tilde{x}_1, \tilde{x}_2, \tilde{x}_3)\in\mathbb{R}^3$ and $t\in\mathbb{R}$ denote the space and time variables, respectively. Let $\boldsymbol{\delta}_{\tilde{\boldsymbol x}}(\mathbf{s})\hat{f}''(t)\tilde{\mathbf{p}}$ signify a wide-band impinging signal, where $\hat{f}:t\mapsto \hat{f}(t)\in C^{\infty}_0([0,C_1])$ with $C_1>0$, $\boldsymbol{\delta}_{\tilde{\boldsymbol{x}}}(\mathbf{s})$ is the Kronecker delta with $\mathbf{s}=(s_1,s_2,s_3)\in\mathbb{R}^3$ being the source location, and $\tilde{\mathbf p}=(\tilde{p}_i)_{i=1}^3\in\mathbb{R}^3$ signifies a polarization vector. Let $\hat{\tilde{\mathbf{u}}}(\tilde{\boldsymbol x},t)=(\hat{\tilde{u}}_i(\tilde{\boldsymbol x}, t))_{i=1}^3$ and $\hat{\tilde{p}}(\tilde{\boldsymbol x},t)$ be, respectively, the total elastic displacement field outside the domain $D$ and the acoustic pressure inside the domain $D$. Define the symmetric gradient (strain tensor) $\nabla^s$  by
\begin{align*}
\nabla^s\hat{\tilde{\mathbf{u}}}:=\frac12(\nabla \hat{\tilde{\mathbf{u}}}+\nabla \hat{\tilde{\mathbf{u}}}^\top),
\end{align*}
where $\top$  denotes the transpose and  $\nabla \hat{\tilde{\mathbf{u}}}$ is the displacement gradient given by $\nabla\hat{\tilde{\mathbf{u}}}=(\partial_{\tilde{x}_j}\hat{\tilde{u}}_i)_{i,j=1}^3$. The elastostatic operator and the conormal derivative on $\partial D$ are defined, respectively, as
\begin{align*}
\mathcal{L}_{\tilde{\lambda},\tilde{\mu}}\hat{\tilde{\mathbf{u}}}&:=\tilde{\mathbf{\mu}}\Delta\hat{\tilde{\mathbf{u}}}+(\tilde{\lambda}+\tilde{\mu})\nabla\nabla\cdot\hat{\tilde{\mathbf{u}}}  && \text{in}~~\mathbb R^3\times\mathbb{R},\\
\frac{\partial}{\partial{\boldsymbol{\nu}}}\hat{\tilde{\mathbf{u}}}&:=\tilde{\lambda}(\nabla \cdot \hat{\tilde{\mathbf{u}}})\boldsymbol{\nu}+2\tilde{\mu}(\nabla^s \hat{\tilde{\mathbf{u}}})\boldsymbol{\nu}&&\text{on}~~\partial D\times\mathbb{R},
\end{align*}
where $\boldsymbol{\nu}$ denotes the exterior unit normal vector to the boundary $\partial D$.

The acoustic-elastic wave interaction  can be
  described by the following coupled PDE system
\begin{align}\label{S:Lame}
\left\{ \begin{aligned}
&\mathcal{L}_{\tilde{\lambda},\tilde{\mu}}\hat{\tilde{\mathbf{u}}}(\tilde{\boldsymbol x},t)-\rho_e\partial^2_{t}\hat{\tilde{\mathbf{u}}}(\tilde{\boldsymbol{x}},t)=\boldsymbol\delta_{\tilde{\boldsymbol x}}(\mathbf{s})\hat{f}''(t)\tilde{\mathbf{p}},\quad&&(\tilde{\boldsymbol{x}},t)\in\mathbb{R}^3\backslash\overline{D}\times\mathbb{R},\\
&\Delta \hat{\tilde{p}}(\tilde{\boldsymbol{x}},t)-\frac{\gamma}{c^2_b} \partial_t\hat{\tilde{p}}(\tilde{\boldsymbol{x}},t)-\frac{1}{c^2_b}\partial^2_{t}\hat{\tilde{p}}(\tilde{\boldsymbol{x}},t)=0,\quad&&(\tilde{\boldsymbol{x}},t)\in{D}\times\mathbb{R},\\
&\nabla \hat{\tilde{p}}(\tilde{\boldsymbol{x}},t)\cdot \boldsymbol{\nu}=-\rho_b\partial^2_{t}\hat{\tilde{\mathbf{u}}}(\tilde{\boldsymbol{x}},t)\cdot \boldsymbol \nu,\quad&&(\tilde{\boldsymbol{x}},t)\in\partial D\times\mathbb{R},\\
&-\hat{\tilde{p}}(\tilde{\boldsymbol{x}},t)\boldsymbol{\nu}=\frac{\partial}{\partial\boldsymbol{\nu}}\hat{\tilde{\mathbf{u}}}(\tilde{\boldsymbol{x}},t),\quad&&(\tilde{\boldsymbol{x}},t)\in\partial D\times\mathbb R,\\
&\hat{\tilde{\mathbf{u}}}-\hat{\tilde{\mathbf{u}}}^{\mathrm{in}} \text{ satisfies the radiation condition}.
\end{aligned}\right.
\end{align}
In the physical setup, the first equation in  \eqref{S:Lame} is known as the Lam\'{e} system which describes
the propagation of elastic deformation, whereas the second one is the wave equation
which governs the acoustic wave propagation.  The elastic and acoustic fields are
coupled together via the   kinematic and dynamic
interface conditions as seen in \cite{BGL2018,zhang2022time}.   The third condition in  \eqref{S:Lame} presents the continuity of the normal component of the displacement field on the boundary $\partial D$, whereas the fourth one signifies the continuity of the stress across the boundary $\partial D$; see \cite{Li2022Minnaert}. Denote by $\hat{\tilde{\mathbf{u}}}^{\mathrm{in}}$ an outgoing elastic point source to
\begin{align*}
\mathcal{L}_{\tilde{\lambda},\tilde{\mu}}\hat{\tilde{\mathbf{u}}}(\tilde{\boldsymbol x},t)-\rho_e\partial^2_{t}\hat{\tilde{\mathbf{u}}}(\tilde{\boldsymbol{x}},t)=\boldsymbol\delta_{\tilde{\boldsymbol x}}(\mathbf{s})\hat{f}''(t)\tilde{\mathbf{p}}.
\end{align*}
The radiation condition in \eqref{S:Lame} designates the following
two asymptotic relations as $|\tilde{\boldsymbol x}|\rightarrow+\infty$:
 \begin{align*}
(\nabla\times\nabla\times(\hat{\tilde{\mathbf{u}}}-\hat{\tilde{\mathbf{u}}}^{\mathrm{in}}))(\tilde{\boldsymbol x}, t)\times\frac{\tilde{\boldsymbol x}}{|\tilde{\boldsymbol x}|}-\frac{1}{\tilde{c}_s}\nabla\times\partial_t(\hat{\tilde{\mathbf{u}}}-\hat{\tilde{\mathbf{u}}}^{\mathrm{in}})(\tilde{\boldsymbol x}, t)&=\mathcal{O}(|\tilde{\boldsymbol x}|^{-2}),\\
\frac{\tilde{\boldsymbol x}}{|\tilde{\boldsymbol x}|}\cdot (\nabla\nabla\cdot(\hat{\tilde{\mathbf{u}}}-\hat{\tilde{\mathbf{u}}}^{\mathrm{in}}))(\tilde{\boldsymbol x}, t)-\frac{1}{\tilde{c}_p}\nabla\partial_t(\hat{\tilde{\mathbf{u}}}-\hat{\tilde{\mathbf{u}}}^{\mathrm{in}})(\tilde{\boldsymbol x}, t)&=\mathcal{O}(|\tilde{\boldsymbol x}|^{-2}),%\label{radiation1}
\end{align*}
where
\begin{align}\label{E:speed}
\tilde{c}_s=\sqrt{\tilde{\mu}/\rho_e},\quad \tilde{c}_p=\sqrt{(\tilde{\lambda}+2\tilde{\mu})/\rho_e}.
\end{align}

In this article, we shall establish under a certain physical scenario when the medium parameters and the incident wave fulfil some general conditions that the scattered field $\hat{\tilde{\mathbf{u}}}-\hat{\tilde{\mathbf{u}}}^{\mathrm{in}}$ to \eqref{S:Lame} can be well approximated by a Minnaert resonant expansion in the low-frequency regime. The rest of the paper is outlined as follows.  Section \ref{sec:2} aims at giving the time-harmonic form  of the system \eqref{S:Lame} via the temporal Fourier transform and presenting the equivalent integral representations by the layer-potential
methods. In Section \ref{sec:3}, we present the asymptotic and spectral analyses of the  layer-potential operators and establish the modal decomposition with a finite number of modes  of the acoustic-elastic wave scattering in the time-harmonic regime. The purpose of Section \ref{sec:4} aims to calculate the size- and frequency-dependent Minnaert resonance. In Section \ref{sec:5}, our goal is to establish an approximation for the truncated scattered field of the bubble-elastic structure in the time domain.

%-----------------------------------------------------------------------------------------------
%-----------------------------------------------------------------------------------------------

\section{Time-harmonic form and preliminaries}\label{sec:2}

The objective of this section is to apply the temporal Fourier transform to obtain the time-harmonic form of the system \eqref{S:Lame}, and then give the corresponding equivalent integral formulations by means of the potential theory.

\subsection{Temporal Fourier transforms}
Introduce the temporal Fourier transforms and inverse Fourier transforms as follows:
\begin{align*}
(\mathcal{F}\hat{\tilde{p}})(\tilde{\boldsymbol x},\omega)&:=\frac{1}{2\pi}\int^{+\infty}_{-\infty}\hat{\tilde{p}}(\tilde{\boldsymbol x},t)e^{\mathrm{i}\omega t}\mathrm{d}t,\qquad
(\mathcal{F}\hat{\tilde{\mathbf{u}}})(\tilde{\boldsymbol x},\omega):=\frac{1}{2\pi}\int^{+\infty}_{-\infty}\hat{\tilde{\mathbf{u}}}(\tilde{\boldsymbol x},t)e^{\mathrm{i}\omega t}\mathrm{d}t,\\
(\mathcal{F}^{-1}\tilde{p})(\tilde{\boldsymbol x},t)&:=\int^{+\infty}_{-\infty}\tilde{p}(\tilde{\boldsymbol x},\omega)e^{-\mathrm{i}\omega t}\mathrm{d}\omega,\qquad
(\mathcal{F}^{-1}\tilde{\mathbf{u}})(\tilde{\boldsymbol x},t):=\int^{+\infty}_{-\infty}\tilde{\mathbf{u}}(\tilde{\boldsymbol x},\omega)e^{-\mathrm{i}\omega t}\mathrm{d}\omega,
\end{align*}
where $\mathrm{i}:=\sqrt{-1}$ is the imaginary unit.

Throughout the rest of the paper, we impose that physical conditions that {$\hat{\tilde{p}}(\tilde{\boldsymbol x},\cdot)\in L^2(\mathbb{R}),\hat{\tilde{\mathbf{u}}}(\tilde{\boldsymbol x},\cdot)\in L^2(\mathbb{R})^3$} for a fixed $\tilde{\boldsymbol x}$, and that the wave and its first derivative with respect to time decay to zero as the time tends to infinity, namely, $\hat{\tilde{p}}(\tilde{\boldsymbol x},\cdot),\hat{\tilde{\mathbf{u}}}(\tilde{\boldsymbol x},\cdot)\rightarrow0$ and $\frac{\partial}{\partial t}\hat{p}(\tilde{\boldsymbol x},\cdot),\frac{\partial}{\partial t}\hat{\tilde{\mathbf{u}}}(\tilde{\boldsymbol x},\cdot)\rightarrow0$ as $|t|\rightarrow+\infty$. Let $\tilde{p},\tilde{\mathbf{u}}$ and $f$ stand for the Fourier transforms of $\hat{\tilde{p}},\hat{\tilde{\mathbf u}}$ and $\hat{f}$, respectively. Using the temporal Fourier transform to \eqref{S:Lame} yields that
\begin{align}\label{SS:Lame}
\left\{ \begin{aligned}
&\mathcal{L}_{\tilde{\lambda},\tilde{\mu}}\tilde{\mathbf{u}}(\tilde{\boldsymbol x},\omega)+\omega^2\rho_e\tilde{\mathbf{u}}(\tilde{\boldsymbol x},\omega)=-\boldsymbol\delta_{\tilde{\boldsymbol x}}(\mathbf{s})\omega^2{f}(\omega)\tilde{\mathbf{p}},\quad&&\tilde{\boldsymbol x}\in\mathbb{R}^3\backslash\overline{D},\\
&\Delta \tilde{p}(\tilde{\boldsymbol x},\omega)+c(\omega)\tilde{k}^2\tilde{p}(\tilde{\boldsymbol x},\omega)=0,&&\tilde{\boldsymbol x}\in{D},\\
&\nabla \tilde{p}(\tilde{\boldsymbol x},\omega)\cdot \boldsymbol{\nu}=\rho_b\omega^2\tilde{\mathbf{u}}(\tilde{\boldsymbol x},\omega)\cdot \boldsymbol \nu,&&\tilde{\boldsymbol x}\in\partial D,\\
&-\tilde{p}(\tilde{\boldsymbol x},\omega)\boldsymbol{\nu}=\frac{\partial}{\partial{\boldsymbol{\nu}}}\tilde{\mathbf{u}}(\tilde{\boldsymbol x},\omega),&&\tilde{\boldsymbol x}\in\partial D,\\
&\tilde{\mathbf{u}}-\tilde{\mathbf{u}}^{\mathrm{in}} \text{ satisfies the radiation condition},
\end{aligned}\right.
\end{align}
where
\begin{align}\label{c:omega}
\tilde{k}={\omega}/{c_b}\quad\text{and}\quad c(\omega)=1+{\mathrm{i}\gamma}/{\omega}.
\end{align}
For simplicity, we no longer write the dependence of $\omega$ for  $\tilde{p},\tilde{\mathbf{u}}$.  The radiation condition in \eqref{SS:Lame} designates the following conditions as $|\tilde{\boldsymbol x}|\rightarrow+\infty$:
\begin{align*}%\label{radiation}
(\nabla\times\nabla\times(\tilde{\mathbf{u}}-\tilde{\mathbf{u}}^{\mathrm{in}}))(\tilde{\boldsymbol x})\times\frac{\tilde{\boldsymbol x}}{|\tilde{\boldsymbol x}|}+\mathrm{i}\tilde{k}_s\nabla\times(\tilde{\mathbf{u}}-\tilde{\mathbf{u}}^{\mathrm{in}})(\tilde{\boldsymbol x})&=\mathcal{O}(|\tilde{\boldsymbol x}|^{-2}),\\
\frac{\tilde{\boldsymbol x}}{|\tilde{\boldsymbol x}|}\cdot (\nabla\nabla\cdot(\tilde{\mathbf{u}}-\tilde{\mathbf{u}}^{\mathrm{in}}))(\tilde{\boldsymbol x})+\mathrm{i}\tilde{k}_p\nabla(\tilde{\mathbf{u}}-\tilde{\mathbf{u}}^{\mathrm{in}})(\tilde{\boldsymbol x})&=\mathcal{O}(|\tilde{\boldsymbol x}|^{-2}).%\label{radiation1}
\end{align*}
Here
\begin{align*}%\label{SP_speed}
\tilde{k}_s=\frac{\omega}{\tilde{c}_s}=\frac{\omega}{\sqrt{\tilde{\mu}/\rho_e}},\quad \tilde{k}_p=\frac{\omega}{\tilde{c}_p}=\frac{\omega}{\sqrt{(\tilde{\lambda}+2\tilde{\mu})/\rho_e}}.
\end{align*}

\subsection{Boundary integral operators}\label{sec:2.1}

We introduce the layer-potential  operators for the Helmholtz equation
\begin{align*}
\Delta \breve{p}(\tilde{\boldsymbol x})+\tilde{k}^2\breve{p}(\tilde{\boldsymbol x})=0.
\end{align*}
The corresponding fundamental solution is given by
\begin{align*}%\label{Fundamental}
\mathscr{G}^{\tilde{k}}(\tilde{\boldsymbol{x}})=-\frac{e^{\mathrm{i}\tilde{k}|\tilde{\boldsymbol{x}}|}}{4\pi|\tilde{\boldsymbol{x}}|}.
\end{align*}
For $\breve{\varphi}\in L^2(\partial D)$, the single layer potential and the Neumann-Poincar\'{e} operator involved for the fundamental solution $\mathscr{G}^{\tilde{k}}$ are defined, respectively, by
\begin{align}\label{Single}
\mathscr{S}^{\tilde{k}}_{\partial D}[\breve{\varphi}](\tilde{\boldsymbol{x}})&:=\int_{\partial D}\mathscr{G}^{\tilde{k}}(\tilde{\boldsymbol{x}},\tilde{\boldsymbol{y}})\breve{\varphi}(\tilde{\boldsymbol{y}})\mathrm{d}\sigma(\tilde{\boldsymbol{y}}), &&\tilde{\boldsymbol{x}}\in\mathbb{R}^3,\\
\mathscr{K}^{\tilde{k},*}_{\partial D}[\breve{\varphi}](\tilde{\boldsymbol{x}})&:=\mathrm{p.v.}\int_{\partial D} \nabla_{\tilde{\boldsymbol{x}}}\mathscr{G}^{\tilde{k}}(\tilde{\boldsymbol{x}},\tilde{\boldsymbol{y}})\cdot\boldsymbol{\nu}_{\tilde{\boldsymbol{x}}}\breve{\varphi}(\tilde{\boldsymbol{y}})\mathrm{d}\sigma(\tilde{\boldsymbol{y}}), &&\tilde{\boldsymbol{x}}\in\partial D,\label{NP}
\end{align}
where $\mathscr{G}^{\tilde{k}}(\tilde{\boldsymbol x},\tilde{\boldsymbol y}):=\mathscr{G}^{\tilde{k}}(\tilde{\boldsymbol x}-\tilde{\boldsymbol y})$ and $\mathrm{p.v.}$ stands for the Cauchy principle value. Then the conormal derivative of the single layer potential enjoys the jump formula
\begin{align}\label{Jump}
\nabla \mathscr{S}^{\tilde{k}}_{\partial D}[\breve{\varphi}]\cdot\boldsymbol{\nu}|_{\pm}(\tilde{\boldsymbol{x}})=\big(\pm\frac{1}{2}
\mathcal {I}+\mathscr{K}^{\tilde{k},*}_{\partial D}\big)[\breve{\varphi}](\tilde{\boldsymbol{x}}),\quad \tilde{\boldsymbol{x}}\in\partial D,
\end{align}
where $\mathcal {I}$ is the identity operator and $\pm$ denotes the traces on $\partial D$ taken from outside and inside of the domain $D$, respectively. Moreover,  the $L^2$-adjoint of the operator $\mathscr{K}^{\tilde{k},*}_{\partial D}$ is given by
\begin{align*}%\label{NPadjoint}
\mathscr{K}^{\tilde{k}}_{\partial D}[\breve{\varphi}](\tilde{\boldsymbol{x}})=\mathrm{p.v.}\int_{\partial D} \nabla_{\tilde{\boldsymbol{y}}}\mathscr{G}^{\tilde{k}}(\tilde{\boldsymbol{x}},\tilde{\boldsymbol{y}})\cdot\boldsymbol{\nu}_{\tilde{\boldsymbol{y}}}\breve{\varphi}(\tilde{\boldsymbol{y}})\mathrm{d}\sigma(\tilde{\boldsymbol{y}}),\quad \tilde{\boldsymbol{x}}\in\partial D.
\end{align*}
For simplicity,  we denote   $ \mathscr{S}^{\tilde{k}}_{\partial D},\mathscr{K}^{\tilde{k},*}_{\partial D},\mathscr{K}^{\tilde{k}}_{\partial D}$ with $\tilde{k}=0$ by $\mathscr{S}_{\partial D},\mathscr{K}^{*}_{\partial D},\mathscr{K}_{\partial D}$.
We would like to point out that the operators $\mathscr{S}_{\partial D},\mathscr{K}^{*}_{\partial D},\mathscr{K}_{\partial D}$ have the following expressions in three dimensions:
\begin{align*}
\mathscr{S}_{\partial D}[\breve{\varphi}](\tilde{\boldsymbol{x}})&=-\int_{\partial D}\frac{1}{4\pi|\tilde{\boldsymbol x}-\tilde{\boldsymbol y}|}\breve{\varphi}(\tilde{\boldsymbol{y}})\mathrm{d}\sigma(\tilde{\boldsymbol{y}}),&& \tilde{\boldsymbol{x}}\in\mathbb{R}^3,\\
\mathscr{K}^{*}_{\partial D}[\breve{\varphi}](\tilde{\boldsymbol{x}})&=\int_{\partial D} \frac{( \tilde{\boldsymbol x}-\tilde{\boldsymbol y})\cdot\boldsymbol{\nu}_{\tilde{\boldsymbol{x}}}}{4\pi|\tilde{\boldsymbol x}-\tilde{\boldsymbol y}|^3}\breve{\varphi}(\tilde{\boldsymbol{y}})\mathrm{d}\sigma(\tilde{\boldsymbol{y}}),&& \tilde{\boldsymbol{x}}\in\partial D,\\
\mathscr{K}_{\partial D}[\breve{\varphi}](\tilde{\boldsymbol{x}})&=\int_{\partial D} \frac{( \tilde{\boldsymbol y}-\tilde{\boldsymbol x})\cdot\boldsymbol{\nu}_{\tilde{\boldsymbol{y}}}}{4\pi|\tilde{\boldsymbol x}-\tilde{\boldsymbol y}|^3}\breve{\varphi}(\tilde{\boldsymbol{y}})\mathrm{d}\sigma(\tilde{\boldsymbol{y}}),&& \tilde{\boldsymbol{x}}\in\partial D.
\end{align*}

On the other hand, the  fundamental solution of the Lam\'{e} system
\begin{align*}
\mathcal{L}_{\lambda,\mu}\breve{\mathbf{u}}(\tilde{\boldsymbol x})+\tilde{k}^2\breve{\mathbf{u}}(\tilde{\boldsymbol x})=0
\end{align*}
 is the Kupradze matrix $\boldsymbol\Gamma^{\tilde{k}}=(\Gamma^{\tilde{k}}_{ij})^{3}_{i,j=1}$ with $\tilde{k}\neq0$ given by \begin{align}\label{Fundamental2}
\boldsymbol\Gamma^{\tilde{k}}(\tilde{\boldsymbol x}) =-\frac{e^{\mathrm{i}\frac{\tilde{k}}{c_s}|\tilde{\boldsymbol x}|}}{4\pi\mu|\tilde{\boldsymbol x}|}\boldsymbol{\mathcal{I}}+\frac{1}{4\pi \tilde{k}^2}\nabla\nabla\big(\frac{e^{\mathrm{i}\frac{\tilde{k}}{c_p}|\tilde{\boldsymbol x}|}
-e^{\mathrm{i}\frac{\tilde{k}}{c_s}|\tilde{\boldsymbol x}|}}{|\tilde{\boldsymbol x}|}\big),
\end{align}
where $c_s=\sqrt{\mu},c_p=\sqrt{\lambda+2\mu}$ and $\boldsymbol{\mathcal{I}}$ is the $3\times 3$ identity matrix.
The $p$-wavenumber and $s$-wavenumber satisfy
\begin{align*}%\label{N:SP_speed}
\breve{k}_s=\frac{\tilde{k}}{c_s}=\frac{\tilde{k}}{\sqrt{\mu}},\quad\breve{k}_p=\frac{\tilde{k}}{c_p}=\frac{\tilde{k}}{\sqrt{\lambda+2\mu}}. \end{align*}
The fundamental solution to the elastostatic system is the Kelvin matrix $\boldsymbol\Gamma^{0}=(\Gamma^{0}_{ij})^{3}_{i,j=1}$ with
\begin{align*}%\label{SFundamental2}
 \boldsymbol\Gamma^{0}(\tilde{\boldsymbol x})=-\frac{\kappa_1}{4\pi}\frac{1}{|\tilde{\boldsymbol x}|}\boldsymbol{\mathcal{I}}-\frac{\kappa_2}{4\pi}\frac{\tilde{\boldsymbol x} {\tilde{\boldsymbol x}}^\top}{|\tilde{\boldsymbol x}|^3},
\end{align*}
 where %$\gamma_1=\frac{1}{2}(\frac{1}{c^2_s}+\frac{1}{c^2_p})$ and $\gamma_2=\frac{1}{2}(\frac{1}{c^2_s}-\frac{1}{c^2_p}).$
\begin{align*}
\kappa_1=\frac{1}{2}\big(\frac{1}{c^2_s}+\frac{1}{c^2_p}\big),\quad \kappa_2=\frac{1}{2}\big(\frac{1}{c^2_s}-\frac{1}{c^2_p}\big).
\end{align*}
For $\breve{\boldsymbol\varphi}\in L^{2}(\partial D)^3$, we define, respectively, the single layer potential and the Neumann-Poincar\'{e} operator associated with the fundamental solution $\boldsymbol\Gamma^{\tilde{k}}$ by
\begin{align}\label{Single2}
\mathbf{S}^{\tilde{k}}_{\partial D}[\breve{\boldsymbol\varphi}](\tilde{\boldsymbol x})&:=\int_{\partial D}\boldsymbol\Gamma^{\tilde{k}}(\tilde{\boldsymbol x},\tilde{\boldsymbol y})\breve{\boldsymbol\varphi}(\tilde{\boldsymbol y})\mathrm{d}\sigma(\tilde{\boldsymbol y}),&&\tilde{\boldsymbol x}\in\mathbb{R}^3,\\
\mathbf{K}^{\tilde{k},*}_{\partial D}[\breve{\boldsymbol\varphi}](\tilde{\boldsymbol x})&:=\mathrm{p.v.}\int_{\partial D} \frac{\partial}{\partial\boldsymbol{\nu}_{\tilde{\boldsymbol x}}}\boldsymbol\Gamma^{\tilde{k}}(\tilde{\boldsymbol x},\tilde{\boldsymbol y})\breve{\boldsymbol\varphi}(\tilde{\boldsymbol y})\mathrm{d}\sigma(\tilde{\boldsymbol y}),&& \tilde{\boldsymbol x}\in\partial{D},\label{NP2}
\end{align}
where  $\boldsymbol\Gamma^{\tilde{k}}(\tilde{\boldsymbol x},\tilde{\boldsymbol y}):=\boldsymbol\Gamma^{\tilde{k}}(\tilde{\boldsymbol x}-\tilde{\boldsymbol y})$.
%where $\mathrm{p.v.}$ represents the Cauchy principle value.
The conormal derivative of the single layer potential enjoys the jump relation as follows:
\begin{align}\label{Jump2}
\frac{\partial}{\partial\boldsymbol{\nu}}\mathbf{S}^{\tilde{k}}_{\partial D}[\breve{\boldsymbol\varphi}]\big|_{\pm}(\tilde{\boldsymbol x})=\big(\pm\frac{1}{2}\mathbf{I}+\mathbf{K}^{\tilde{k},*}_{\partial D}\big)[\breve{\boldsymbol\varphi}](\tilde{\boldsymbol x}),\quad \tilde{\boldsymbol x}\in\partial{D},
\end{align}
where $\mathbf{I}$ is the identity operator. Moreover,  the  $L^2$-adjoint of the operator $\mathbf{K}^{k,*}_{\partial D}$ is given by
\begin{align*}%\label{NPadjoint2}
\mathbf{K}^{\tilde{k}}_{\partial D}[\breve{\boldsymbol\varphi}](\tilde{\boldsymbol x})=\mathrm{p.v.}\int_{\partial D} \frac{\partial}{\partial\boldsymbol{\nu}_{\tilde{\boldsymbol y}}}\boldsymbol\Gamma^{\tilde{k}}(\tilde{\boldsymbol x},\tilde{\boldsymbol y})\breve{\boldsymbol\varphi}(\tilde{\boldsymbol y})\mathrm{d}\sigma(\tilde{\boldsymbol y}),\quad \tilde{\boldsymbol x}\in\partial D.
\end{align*}

\subsection{Rescaling system}

Let us set
\begin{align*}%\label{E:contrast}
\delta:=\frac{\rho_b}{\rho_e},\quad\tau:=\frac{c_b}{\tilde{c}_{s}+\tilde{c}_p}=\frac{\sqrt{\kappa/\rho_b}}{\sqrt{\tilde{\mu}/\rho_e}+\sqrt{(\tilde{\lambda}+2\tilde{\mu})/\rho_e}},
\end{align*}
where $\delta$ states the contrast of the densities of the bubble and the elastic material and $\tau$ stands for the contrast of the  velocities of the bubble and the elastic material.
Then we introduce the following rescaling  terms and the non-dimensional parameters:
\begin{align}\label{E:transform}
\breve{\mathbf{u}}=\tilde{\mathbf{u}},\quad \breve{p}=\frac{\tilde{p}}{\rho_bc^2_b},\quad\mu=\frac{\tilde{\mu}}{\big(\sqrt{\tilde{\mu}}+\sqrt{\tilde{\lambda}+2\tilde{\mu}}\big)^2},\quad \lambda=\frac{\tilde{\lambda}}{\big(\sqrt{\tilde{\mu}}+\sqrt{\tilde{\lambda}+2\tilde{\mu}}\big)^2}.
\end{align}
By these transformations in \eqref{E:transform}, one reads that
\begin{align}\label{N:system}
\left\{ \begin{aligned}
&\mathcal{L}_{\lambda,\mu}\breve{\mathbf{u}}(\tilde{\boldsymbol x})+\tilde{k}^2\tau^2\breve{\mathbf{u}}(\tilde{\boldsymbol x})=-\boldsymbol\delta_{{\tilde{\boldsymbol x}}}(\mathbf{s})\omega^2{f}(\omega)\mathbf{p},\quad&&\tilde{\boldsymbol x}\in\mathbb{R}^3\backslash\overline{D},\\
&\Delta \breve{p}(\tilde{\boldsymbol x})+c(\omega)\tilde{k}^2\breve{p}(\tilde{\boldsymbol x})=0,&&\tilde{\boldsymbol x}\in{D},\\
&\nabla \breve{p}(\tilde{\boldsymbol x})\cdot \boldsymbol{\nu}-\tilde{k}^2\breve{\mathbf{u}}(\tilde{\boldsymbol x})\cdot \boldsymbol \nu=0,&&\tilde{\boldsymbol x}\in\partial D,\\
&\delta\tau^2\breve{p}(\tilde{\boldsymbol x})\boldsymbol{\nu}+\frac{\partial}{\partial{\boldsymbol{\nu}}}\breve{\mathbf{u}}(\tilde{\boldsymbol x})=0,&&\tilde{\boldsymbol x}\in\partial D,\\
&\breve{\mathbf{u}}-\breve{\mathbf{u}}^{\mathrm{in}} \text{ satisfies the radiation condition},
\end{aligned}\right.
\end{align}
where $\mathbf{p}=\frac{1}{\big(\sqrt{\tilde{\mu}}+\sqrt{\tilde{\lambda}+2\tilde{\mu}}\big)^2}\tilde{\mathbf{p}}$.

With the help of the layer-potential operators shown in Subsection \ref{sec:2.1}, the solution to \eqref{N:system} can be written as
\begin{align}\label{E:Solution}
\left\{ \begin{aligned}
&\breve{p}(\tilde{\boldsymbol x})=\mathscr{S}^{\tilde{k}_1}_{\partial D}[\breve{\varphi}_b](\tilde{\boldsymbol x}),\quad&&\tilde{\boldsymbol x}\in D,\\
&\breve{\mathbf{u}}(\tilde{\boldsymbol x})=\mathbf{S}^{\tilde{k}\tau}_{\partial D}[\breve{\boldsymbol{\varphi}}_e](\tilde{\boldsymbol x})+\breve{\mathbf{u}}^{\mathrm{in}}(\tilde{\boldsymbol x}),\quad && \tilde{\boldsymbol x}\in\mathbb{R}^3\backslash\overline{D}
\end{aligned}\right.
\end{align}
for the density $(\breve{\varphi}_b,\breve{\boldsymbol{\varphi}}_e)\in L^2(\partial D)\times L^2(\partial D)^3$, where
\begin{align*}%\label{k:one}
\tilde{k}_1=\tilde{k}\sqrt{c(\omega)}.
\end{align*}
Combining the transmission conditions in \eqref{N:system} across $\partial D$ with \eqref{Jump} and \eqref{Jump2}, the pair $(\breve{\varphi}_b,\breve{\boldsymbol{\varphi}}_e)$ is the unique solution to
\begin{align}\label{E:integral}
\left\{ \begin{aligned}
&\big(-\frac{1}{2}\mathcal{I}+\mathscr K^{\tilde{k}_1,*}_{\partial D}\big)[\breve{\varphi}_b](\tilde{\boldsymbol{x}})-\tilde{k}^2\boldsymbol\nu\cdot\mathbf{S}^{\tilde{k}\tau}_{\partial D}[\breve{\boldsymbol\varphi}_e](\tilde{\boldsymbol{x}})=\breve{\mathrm{F}}_1(\tilde{\boldsymbol{x}}),\quad&& \tilde{\boldsymbol{x}}\in\partial{D},\\
&\delta\tau^2\boldsymbol\nu \mathscr S^{\tilde{k}_1}_{\partial D}[\breve{\varphi}_b](\tilde{\boldsymbol{x}})+\big(\frac{1}{2}\mathbf{I}+\mathbf{K}^{\tilde{k}\tau,*}_{\partial D}\big)[\breve{\boldsymbol\varphi}_e](\tilde{\boldsymbol{x}})=\breve{\mathbf{F}}_2(\tilde{\boldsymbol{x}}),\quad&& \tilde{\boldsymbol{x}}\in\partial{D},
\end{aligned}\right.
\end{align}
where
\begin{align*}
\breve{\mathrm{F}}_1(\tilde{\boldsymbol{x}})=\tilde{k}^2\boldsymbol\nu\cdot \breve{\mathbf{u}}^{\mathrm{in}}(\tilde{\boldsymbol x}),\quad\breve{\mathbf{F}}_2(\tilde{\boldsymbol{x}})=-\frac{\partial}{\partial\boldsymbol\nu}\breve{\mathbf{u}}^{\mathrm{in}}(\tilde{\boldsymbol x}).
\end{align*}
Here an incident field to the system \eqref{N:system} can be written as
\begin{align}\label{E:incident}
\breve{\mathbf{u}}^{\mathrm{in}}(\tilde{\boldsymbol x})=-\omega^2f(\omega)\boldsymbol\Gamma^{\tilde{k}\tau}(\tilde{\boldsymbol x},\mathbf{s})\mathbf{p},\quad \tilde{\boldsymbol x}\in\mathbb{R}^3\setminus\overline{D}.
\end{align}
We also define
\begin{align*}
\breve{\mathbf{u}}^{\mathrm{sca}}(\tilde{\boldsymbol x}):=\breve{\mathbf{u}}(\tilde{\boldsymbol x})-\breve{\mathbf{u}}^{\mathrm{in}}(\tilde{\boldsymbol x}),\quad \tilde{\boldsymbol x}\in\mathbb{R}^3\setminus\overline{D}.
\end{align*}

In order to present the small parameter in \eqref{E:integral} explicitly, we introduce the transform $\tilde{\boldsymbol x}=\boldsymbol z+\epsilon \boldsymbol x$ with $\tilde{\boldsymbol x}\in\partial D,\boldsymbol x\in\partial B$. For  two function $\breve{\varphi},\breve{\boldsymbol\varphi}$ on $\partial D$, we define the corresponding functions on $\partial B$ by
\begin{align*}
\varphi(\boldsymbol x):=\breve{\varphi}(\boldsymbol z+\epsilon \boldsymbol x),\quad\boldsymbol\varphi(\boldsymbol x):=\breve{\boldsymbol\varphi}(\boldsymbol z+\epsilon \boldsymbol x).
\end{align*}
Moreover,
\begin{align*}
\mathbf{u}^{\mathrm{sca}}(\boldsymbol x):=\breve{\mathbf{u}}^{\mathrm{sca}}(\boldsymbol z+\epsilon \boldsymbol x).
\end{align*}
\begin{lemm}\label{le:scale}
For $\tilde{\boldsymbol x}\in \partial D,\boldsymbol x\in \partial B$, the following identities hold:
\begin{align*}
\mathscr{S}^{\tilde{k}}_{\partial D}[\breve{\varphi}](\tilde{\boldsymbol x})&=\epsilon \mathscr{S}^{k}_{\partial B}[\varphi](\boldsymbol x),\quad
\mathscr{K}^{\tilde{k},*}_{\partial D}[\breve{\varphi}](\tilde{\boldsymbol x})=\mathscr{K}^{k,*}_{\partial B}[\varphi](\boldsymbol x),\\
\mathbf{S}^{\tilde{k}}_{\partial D}[\breve{\boldsymbol\varphi}](\tilde{\boldsymbol x})&=\epsilon \mathbf{S}^{k}_{\partial B}[\boldsymbol\varphi](\boldsymbol x),\quad
\mathbf{K}^{\tilde{k},*}_{\partial D}[\breve{\boldsymbol\varphi}](\tilde{\boldsymbol x})=\mathbf{K}^{k,*}_{\partial B}[\boldsymbol\varphi](\boldsymbol x),
\end{align*}
where
\begin{align}\label{E:KKK}
k=\tilde{k}\epsilon.
\end{align}
\end{lemm}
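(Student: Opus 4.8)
The plan is to derive all four identities from a single change of variables, $\tilde{\boldsymbol x}=\boldsymbol z+\epsilon\boldsymbol x$ and $\tilde{\boldsymbol y}=\boldsymbol z+\epsilon\boldsymbol y$ with $\boldsymbol x,\boldsymbol y\in\partial B$, by tracking how each factor in the defining integrals \eqref{Single}, \eqref{NP}, \eqref{Single2} and \eqref{NP2} rescales. First I would record the elementary facts: $\tilde{\boldsymbol x}-\tilde{\boldsymbol y}=\epsilon(\boldsymbol x-\boldsymbol y)$ and hence $|\tilde{\boldsymbol x}-\tilde{\boldsymbol y}|=\epsilon|\boldsymbol x-\boldsymbol y|$; the surface element obeys $\mathrm{d}\sigma(\tilde{\boldsymbol y})=\epsilon^2\,\mathrm{d}\sigma(\boldsymbol y)$ because $\partial D=\epsilon\,\partial B+\boldsymbol z$ is a $2$-dimensional surface in $\mathbb R^3$; and the exterior unit normals agree, $\boldsymbol\nu_{\tilde{\boldsymbol x}}=\boldsymbol\nu_{\boldsymbol x}$, since a dilation composed with a translation does not alter directions. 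Also $\breve\varphi(\tilde{\boldsymbol y})=\varphi(\boldsymbol y)$ and $\breve{\boldsymbol\varphi}(\tilde{\boldsymbol y})=\boldsymbol\varphi(\boldsymbol y)$ by definition of the rescaled densities.

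The crux is the (quasi-)homogeneity of the two fundamental solutions under the simultaneous rescaling $k=\tilde k\epsilon$ of \eqref{E:KKK}. From $\mathscr G^{\tilde k}(\tilde{\boldsymbol x})=-e^{\mathrm i\tilde k|\tilde{\boldsymbol x}|}/(4\pi|\tilde{\boldsymbol x}|)$ one reads off directly $\mathscr G^{\tilde k}(\epsilon\boldsymbol x)=\epsilon^{-1}\mathscr G^{k}(\boldsymbol x)$. For the Kupradze matrix \eqref{Fundamental2} I would check each of its two terms: in the first, the constants $\mu,c_s$ are scale-invariant, $1/|\tilde{\boldsymbol x}|$ supplies $\epsilon^{-1}$, and the exponent $\mathrm i(\tilde k/c_s)|\tilde{\boldsymbol x}|$ becomes $\mathrm i(k/c_s)|\boldsymbol x|$; in the second, $\tilde k^{-2}=\epsilon^{2}k^{-2}$, each factor $\nabla_{\tilde{\boldsymbol x}}$ contributes $\epsilon^{-1}$, and $1/|\tilde{\boldsymbol x}|$ contributes $\epsilon^{-1}$, so the net power of $\epsilon$ is $\epsilon^{2}\cdot\epsilon^{-2}\cdot\epsilon^{-1}=\epsilon^{-1}$. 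This gives $\boldsymbol\Gamma^{\tilde k}(\epsilon\boldsymbol x)=\epsilon^{-1}\boldsymbol\Gamma^{k}(\boldsymbol x)$, the rescaling being consistent precisely because the non-dimensional Lamé pair $(\lambda,\mu)$ entering \eqref{Fundamental2} (see \eqref{E:transform}) is left unchanged by the dilation. Finally, differentiation transforms according to $\nabla_{\tilde{\boldsymbol x}}=\epsilon^{-1}\nabla_{\boldsymbol x}$, so the gradient kernel in \eqref{NP} acquires an extra $\epsilon^{-1}$, and the conormal derivative $\partial/\partial\boldsymbol\nu=\lambda(\nabla\cdot)\boldsymbol\nu+2\mu(\nabla^s)\boldsymbol\nu$ in \eqref{NP2}, being first order in $\tilde{\boldsymbol x}$, likewise acquires an extra $\epsilon^{-1}$.

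Assembling these, the identities follow by substitution. For the two single layer potentials the kernel contributes $\epsilon^{-1}$ and the surface measure $\epsilon^{2}$, producing the factor $\epsilon$ in $\mathscr S^{\tilde k}_{\partial D}[\breve\varphi](\tilde{\boldsymbol x})=\epsilon\,\mathscr S^{k}_{\partial B}[\varphi](\boldsymbol x)$ and $\mathbf S^{\tilde k}_{\partial D}[\breve{\boldsymbol\varphi}](\tilde{\boldsymbol x})=\epsilon\,\mathbf S^{k}_{\partial B}[\boldsymbol\varphi](\boldsymbol x)$. For the two Neumann--Poincar\'e operators the derivative hitting the kernel supplies one more $\epsilon^{-1}$, so the net factor is $\epsilon^{-1}\cdot\epsilon^{-1}\cdot\epsilon^{2}=1$, which is the asserted scale invariance $\mathscr K^{\tilde k,*}_{\partial D}[\breve\varphi](\tilde{\boldsymbol x})=\mathscr K^{k,*}_{\partial B}[\varphi](\boldsymbol x)$ and $\mathbf K^{\tilde k,*}_{\partial D}[\breve{\boldsymbol\varphi}](\tilde{\boldsymbol x})=\mathbf K^{k,*}_{\partial B}[\boldsymbol\varphi](\boldsymbol x)$. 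The only point requiring a word of care is that the kernels in \eqref{NP} and \eqref{NP2} are singular and the integrals are Cauchy principal values: since $\boldsymbol x\mapsto\boldsymbol z+\epsilon\boldsymbol x$ is a bi-Lipschitz diffeomorphism of $\partial B$ onto $\partial D$ carrying excised surface balls to excised surface balls, the principal-value limits correspond under the substitution and the computation goes through unchanged. There is no substantive obstacle beyond this bookkeeping.
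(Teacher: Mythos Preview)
Your proof is correct and follows exactly the approach the paper itself takes: the paper's proof is a single sentence saying the identities follow directly from the change of variables $\tilde{\boldsymbol x}=\boldsymbol z+\epsilon\boldsymbol x$ and the definitions \eqref{Single}, \eqref{NP}, \eqref{Single2}, \eqref{NP2}. You have simply fleshed out the bookkeeping that the paper leaves implicit, including the extra care about the principal-value limits, which is a welcome addition.
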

\begin{proof}
These identities follow directly from the transform $\tilde{\boldsymbol x}=\boldsymbol z+\epsilon \boldsymbol x$ and \eqref{Single}, \eqref{NP}, \eqref{Single2} and \eqref{NP2}.
\end{proof}
We need to the invertibility of the  single layer potential $\mathscr{S}^{\tilde{k}\epsilon}_{\partial B}$ in three dimensions as well.
\begin{lemm}\label{le:inverse}
If $k>0$ is small enough, then the three-dimensional single layer potential $\mathscr{S}^{k}_{\partial B}: L^2(\partial B)\rightarrow L^{2}(\partial B)$ is invertible.
\end{lemm}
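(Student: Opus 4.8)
The plan is to establish invertibility of $\mathscr{S}^{k}_{\partial B}$ for small $k>0$ by perturbation from the static operator $\mathscr{S}_{\partial B}=\mathscr{S}^{0}_{\partial B}$. First I would recall the classical fact that in three dimensions the static single layer potential $\mathscr{S}_{\partial B}:L^2(\partial B)\to L^2(\partial B)$ (equivalently, mapping into $H^{1}(\partial B)$) is invertible; this is standard for the Laplacian in $\mathbb{R}^3$ because the associated quadratic form is coercive and there is no logarithmic obstruction as in two dimensions. Since $B$ is the unit ball, one can even write $\mathscr{S}_{\partial B}$ explicitly via spherical harmonics: on the degree-$n$ harmonic subspace it acts as multiplication by $-1/(2n+1)$, which is manifestly nonzero, so $\mathscr{S}_{\partial B}$ is a bijection with bounded inverse.

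Second, I would control the difference $\mathscr{S}^{k}_{\partial B}-\mathscr{S}_{\partial B}$. Using the expansion of the fundamental solution,
\begin{align*}
\mathscr{G}^{k}(\boldsymbol{x})-\mathscr{G}^{0}(\boldsymbol{x})=-\frac{1}{4\pi}\frac{e^{\mathrm{i}k|\boldsymbol{x}|}-1}{|\boldsymbol{x}|}=-\frac{\mathrm{i}k}{4\pi}+O(k^2|\boldsymbol{x}|),
\end{align*}
one sees that $\mathscr{G}^{k}-\mathscr{G}^{0}$ is a bounded (indeed smooth) kernel on the compact surface $\partial B$, uniformly small as $k\to0$. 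Hence $\mathscr{S}^{k}_{\partial B}-\mathscr{S}_{\partial B}$ is a bounded operator on $L^2(\partial B)$ with operator norm $O(k)$ as $k\to0$; more precisely one has the standard asymptotic expansion $\mathscr{S}^{k}_{\partial B}=\mathscr{S}_{\partial B}+k\,\mathscr{S}_{\partial B,1}+k^2\mathscr{S}_{\partial B,2}+\cdots$ with bounded coefficient operators, so in particular $\|\mathscr{S}^{k}_{\partial B}-\mathscr{S}_{\partial B}\|_{\mathcal{L}(L^2(\partial B))}\to0$.

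Finally I would invoke the Neumann series / stability of invertibility under small perturbations: writing $\mathscr{S}^{k}_{\partial B}=\mathscr{S}_{\partial B}\bigl(\mathcal{I}+\mathscr{S}_{\partial B}^{-1}(\mathscr{S}^{k}_{\partial B}-\mathscr{S}_{\partial B})\bigr)$, once $k$ is small enough that $\|\mathscr{S}_{\partial B}^{-1}\|\,\|\mathscr{S}^{k}_{\partial B}-\mathscr{S}_{\partial B}\|<1$, the factor in parentheses is invertible by Neumann series, and therefore so is $\mathscr{S}^{k}_{\partial B}$, with $(\mathscr{S}^{k}_{\partial B})^{-1}$ bounded uniformly in $k$. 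The one genuine subtlety — the ``hard part'' in the sense of requiring care rather than deep ideas — is the baseline invertibility of the static operator $\mathscr{S}_{\partial B}$ in the correct functional setting: one must make sure the mapping properties (from $L^2(\partial B)$ to $L^2(\partial B)$, or into $H^1(\partial B)$, as used later in the paper) are the right ones and that no kernel/cokernel appears; for the unit ball this is transparent from the spherical-harmonic diagonalization, which is why restricting to the radial geometry is convenient here. Everything else is a routine perturbation argument.
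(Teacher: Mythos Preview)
The paper states this lemma without proof, treating it as a known background fact; so there is no ``paper's own proof'' to compare against. Your perturbation-from-the-static-case argument is the standard route and is essentially correct.

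The one point that deserves a sharper statement than you give it is the functional setting. As written, $\mathscr{S}_{\partial B}:L^2(\partial B)\to L^2(\partial B)$ is compact (weakly singular kernel), so it cannot have a \emph{bounded} inverse on $L^2$; your line ``$\|\mathscr{S}_{\partial B}^{-1}\|\,\|\mathscr{S}^{k}_{\partial B}-\mathscr{S}_{\partial B}\|<1$'' needs the norms to live in compatible spaces. The clean fix is exactly what you hint at: regard $\mathscr{S}_{\partial B}:L^2(\partial B)\to H^1(\partial B)$ (or $H^{-1/2}\to H^{1/2}$), where it \emph{is} an isomorphism, and observe that the remainder $\mathscr{S}^{k}_{\partial B}-\mathscr{S}_{\partial B}$ has a smooth kernel and hence is $O(k)$ as a map $L^2\to H^1$. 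Then the Neumann series runs in $\mathcal{L}(L^2)$ for the composite $\mathscr{S}_{\partial B}^{-1}(\mathscr{S}^{k}_{\partial B}-\mathscr{S}_{\partial B})$. Alternatively, and more in the spirit of how the paper actually uses the inverse, one can bypass the abstract argument entirely: Lemma~\ref{le:SK} gives $\mathscr{S}^{k}_{\partial B}[Y^m_n]=\xi_n(k)Y^m_n$ with $\xi_n(k)\to -1/(2n+1)\neq0$ as $k\to0$, and the asymptotics \eqref{E:xi0}--\eqref{E:xin} show each $\xi_n(k)$ is nonzero for $k$ small, so the inverse is well defined mode-by-mode, which is all the subsequent calculations require.
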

In view of Lemma \ref{le:scale},  the solution $(\breve{p},\breve{\mathbf{u}})$ in \eqref{E:Solution} can be rewritten as
\begin{align}\label{Solution2}
\left\{ \begin{aligned}
&p(\boldsymbol x)=\epsilon \mathscr{S}^{k_1}_{\partial B}[\varphi_b](\boldsymbol x),\quad&&\boldsymbol x\in B,\\
&\mathbf{u}(\boldsymbol x)=\epsilon\mathbf{S}^{k\tau}_{\partial B}[\boldsymbol{\varphi}_e](\boldsymbol x)+\mathbf{u}^{\mathrm{in}}(\boldsymbol x),\quad &&\boldsymbol x\in\mathbb{R}^3\backslash\overline{B},
\end{aligned}\right.
\end{align}
where $\mathbf{u}^{\mathrm{in}}(\boldsymbol x):=\breve{\mathbf{u}}^{\mathrm{in}}(\boldsymbol z+\epsilon \boldsymbol x)$ and
\begin{align}\label{E:KK}
k_1=\tilde{k}_1\epsilon=k\sqrt{c(\omega)}.
\end{align}
Then the system  \eqref{E:integral} becomes
\begin{align}\label{E:integral1}
\left\{ \begin{aligned}
&\big(-\frac{1}{2}\mathcal{I}+\mathscr K^{k_1,*}_{\partial B}\big)[\varphi_b](\boldsymbol{x})-\frac{k^2}{\epsilon}\boldsymbol\nu\cdot\mathbf{S}^{k\tau}_{\partial B}[\boldsymbol\varphi_e](\boldsymbol{x})=\mathrm{F}_1(\boldsymbol{x}),\quad&& \boldsymbol{x}\in\partial{B},\\
&\delta\tau^2\boldsymbol\nu \mathscr S^{k_1}_{\partial B}[\varphi_b](\boldsymbol{x})+\frac{1}{\epsilon}\big(\frac{1}{2}\mathbf{I}+\mathbf{K}^{k\tau,*}_{\partial B}\big)[\boldsymbol\varphi_e](\boldsymbol{x})=\frac{1}{\epsilon^2}\mathbf{F}_2(\boldsymbol{x}),\quad&& \boldsymbol{x}\in\partial{B},
\end{aligned}\right.
\end{align}
where
\begin{align*}
\mathrm{F}_1(\boldsymbol{x})=\tilde{k}^2\boldsymbol\nu\cdot \mathbf{u}^{\mathrm{in}}(\boldsymbol x),\quad\mathbf{F}_2(\boldsymbol{x})=-\frac{\partial}{\partial\boldsymbol\nu}\mathbf{u}^{\mathrm{in}}(\boldsymbol x).
\end{align*}
By the second term in \eqref{E:integral1} and Lemma \ref{le:inverse}, we obtain
\begin{align*}
\varphi_b(\boldsymbol x)=\frac{1}{\delta\tau^2\epsilon}(\mathscr S^{k_1}_{\partial B})^{-1}\left(-\boldsymbol\nu\cdot\big(\frac{1}{2}\mathbf{I}+\mathbf{K}^{k\tau,*}_{\partial B}\big)[\boldsymbol\varphi_e](\boldsymbol x)+\frac{1}{\epsilon}\boldsymbol\nu\cdot\mathbf{F}_2(\boldsymbol{x})\right).
\end{align*}
Inserting this into the first term in \eqref{E:integral1} yields that
\begin{align*}%\label{ES:op-eq}
\mathcal{A}(k,\delta)[\boldsymbol{\varphi}_e](\boldsymbol x)=\frac{1}{\epsilon}\mathrm{F}(\boldsymbol{x}),
\end{align*}
%\begin{align*}
%\big(-\frac{1}{2}\mathcal{I}+\mathscr K^{k_1,*}_{\partial B}\big)(\mathscr S^{k_1}_{\partial B})^{-1}\boldsymbol\nu\cdot\big(\frac{1}{2}\mathbf{I}+\mathbf{K}^{k\tau,*}_{\partial B}\big)[\boldsymbol\varphi_e](\boldsymbol x)+\delta\tau^2k^2\boldsymbol\nu\cdot\mathbf{S}^{k\tau}_{\partial B}[\boldsymbol{\varphi}_e](\boldsymbol x)\\=\frac{1}{\epsilon}\left(- \delta\tau^2\epsilon^2\mathrm{F}_1(\boldsymbol{x})+\big(-\frac{1}{2}\mathcal{I}+\mathscr K^{k_1,*}_{\partial B}\big)(\mathscr S^{k_1}_{\partial B})^{-1}[\boldsymbol\nu\cdot\mathbf{F}_2](\boldsymbol{x})\right).
%\end{align*}
%Let us define the operator and function:
where
\begin{align}
\mathcal{A}(k,\delta)[\boldsymbol{\varphi}_e](\boldsymbol x)&=\left(\big(-\frac{1}{2}\mathcal{I}+\mathscr K^{k_1,*}_{\partial B}\big)(\mathscr S^{k_1}_{\partial B})^{-1}\boldsymbol\nu\cdot\big(\frac{1}{2}\mathbf{I}+\mathbf{K}^{k\tau,*}_{\partial B}\big)+\delta\tau^2k^2\boldsymbol\nu\cdot\mathbf{S}^{k\tau}_{\partial B}\right)[\boldsymbol\varphi_e](\boldsymbol x),\label{Op:A}\\
\mathrm{F}(\boldsymbol{x})&:=-{\delta\tau^2} \epsilon^2\mathrm{F}_1(\boldsymbol{x})+\big(-\frac{1}{2}\mathcal{I}+\mathscr K^{k_1,*}_{\partial B}\big)(\mathscr S^{k_1}_{\partial B})^{-1}[\boldsymbol\nu\cdot\mathbf{F}_2](\boldsymbol{x}).\label{F:incident}
\end{align}
%for  $\boldsymbol\varphi\in L^2(\partial B)^3$.
%Then  \eqref{E:integral1} is equivalent to
%\begin{align*}%\label{ES:op-eq}
%\mathcal{A}(k,\delta)[\boldsymbol{\varphi}_e](\boldsymbol x)=\frac{1}{\epsilon}\mathrm{F}(\boldsymbol{x}).
%\end{align*}
This is an equivalent integral equation of the system \eqref{E:integral1}. We then will be devoted to studying the corresponding scattered field in time-harmonic regime.

%-----------------------------------------------------------------------------------------------
%-----------------------------------------------------------------------------------------------

\section{Modal decomposition of the time-harmonic field}\label{sec:3}

The purpose of this section  focuses on the modal decomposition of the acoustic-elastic wave scattering from a bubble-elastic structure. We first choose  an orthogonal basis of $L^2(\partial B)^3$. Then we approximate the incident field with a finite number of modes. Finally,  we obtain the finite modal expansion of the time-harmonic field. Generally, the method as seen in \cite{Ammari2022Asympototic,Baldassari2021Modal,CGL} is based on the research of the static system together with the perturbation theory. In our problem, there is a fact which transmission conditions across the boundary contain the wavenumber. In order to analyze the Minnaert resonance, we have to require higher order asymptotic expansions of the wavenumber, which cannot be given by the perturbation theory. Moreover,  we also need the asymptotic expansions of the corresponding spectrum of the layer-potential operators $\mathscr K^{k_1,*}_{\partial B}, \mathscr S^{k_1}_{\partial B},\boldsymbol\nu\cdot\big(\frac{1}{2}\mathbf{I}+\mathbf{K}^{k\tau,*}_{\partial B}\big)$ and $ \boldsymbol\nu\cdot\mathbf{S}^{k\tau}_{\partial B}$. For this, we  consider a  special form of the density $\boldsymbol{\varphi}_e$. More precisely, ansatz
\begin{align}\label{varphi-e}
\boldsymbol{\varphi}_e={\varphi}_e\boldsymbol\nu
\end{align}
with $\varphi_e \in L^2(\partial B)$.
Now let us study
\begin{align}\label{E:op-eq}
\mathcal{A}(k,\delta)[{\varphi}_e\boldsymbol\nu](\boldsymbol x)=\frac{1}{\epsilon}\mathrm{F}(\boldsymbol{x}),
\end{align}
where $\mathcal{A}(k,\delta)$ and $\mathrm{F}$ are as seen in \eqref{Op:A} and \eqref{F:incident}, respectively.

%-----------------------------------------------------------------------------------------------
%-----------------------------------------------------------------------------------------------

\subsection{Spectrum of layer-potential operators}\label{subsec:3.1}
Let us define the double factorial as follows
\begin{align*}
(2n+1)!!:=1\cdot 3\cdot~\cdots~\cdot (2n-1)\cdot(2n+1).
\end{align*}
Denote by $j_n(z),h_n(z),n\in \mathbb{N}$ the spherical Bessel and Hankel function of the first kind of order $n$, respectively. The following  series representation (cf. \cite{Colton2019Inverse}) shall be needed
\begin{align*}
j_n(z)&=\sum_{p=0}^{+\infty}\frac{(-1)^pz^{n+2p}}{2^pp!1\cdot3\cdot~\cdots~\cdot(2n+2p+1)},\\
y_n(z)&=-\frac{(2n)!}{2^n n!}\sum_{p=0}^{+\infty}\frac{(-1)^pz^{2p-n-1}}{2^pp!(-2n+1)\cdot(-2n+3)\cdot~\cdots~\cdot(-2n+2p-1)},\\
h_{n}(z)&=j_n(z)+\mathrm{i}y_n(z).
\end{align*}
Notice that $j_n$ is analytic on $0<|z|\ll1$ as well as $y_n$ is analytic on $0<|z|\ll1$.
One has
\begin{align}\label{E:NN}
j_n(z)=\frac{z^n}{(2n+1)!!}(1+\mathcal{O}(\frac1n)),\quad h_{n}(z)=\frac{(2n-1)!!}{\mathrm{i} z^{n+1}}(1+\mathcal{O}(\frac1n)),\quad n\rightarrow+\infty,
\end{align}
uniformly on $0<|z|\ll1$. Moreover, we obtain  that for fixed $n\in\mathbb{N}$ and $0<|z|\ll 1$,
\begin{align}
j_n(z)&=\frac{z^n}{(2n+1)!!}\left(1-\frac{z^2}{2(2n+3)}+\mathcal{O}(z^3)\right),\label{E:bessel}
\end{align}
and that for fixed $n\in\mathbb{N}$ and $0<|z|\ll 1$,
\begin{align}
h_0(z)&=\frac{1}{\mathrm{i} z}\left(1+\mathrm{i}z-\frac{z^2}{2}+\mathcal{O}(z^3)\right),\label{E:hankel1}\\
h_n(z)&=\frac{(2n-1)!!}{\mathrm{i}z^{n+1}}\left(1-\frac{z^2}{2(-2n+1)}+\mathcal{O}(z^3)\right),\quad n\geq1.\label{E:hankel}
\end{align}
Because of the series representations of $j_n$ and $y_n$,  the following two recurrence relations (cf. \cite{Colton2019Inverse}) hold:
\begin{align}
j_{n+1}(z)+j_{n-1}(z)&=\frac{2n+1}{z}j_n(z),\quad n=1,2,\cdots,\label{E:recu1}\\
y_{n+1}(z)+y_{n-1}(z)&=\frac{2n+1}{z}y_n(z),\quad n=1,2,\cdots,\nonumber
\end{align}
which then give
\begin{align}\label{E:recu2}
h_{n+1}(z)+h_{n-1}(z)&=\frac{2n+1}{z}h_n(z),\quad n=1,2,\cdots.
\end{align}
In the following,  we shall introduce the eigenfunctions and the corresponding eigenvalues  which will play  important roles in the analysis of modal expansion.

\begin{lemm}\label{le:harmonic}
Denote by $Y^m_n$ with $n\in \mathbb{N},|m|\leq n$ the spherical harmonic functions, which  form an orthogonal basis of $L^2(\partial B)$. Moreover, the eigenvalues of the Laplace-Beltrami operator $\Delta_{\partial B}$ associated with the eigenfunctions $Y^m_n$ are $-n(n+1)$.
\end{lemm}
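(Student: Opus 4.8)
The plan is to recall the classical construction of spherical harmonics via solid (harmonic) polynomials. Let $\mathcal{H}_n$ denote the space of harmonic homogeneous polynomials of degree $n$ in the variables $x_1,x_2,x_3$; it is standard that $\dim\mathcal{H}_n=2n+1$, and one fixes a basis $\{p_n^m:|m|\le n\}$ of $\mathcal{H}_n$ which is orthogonal in $L^2(\partial B)$, setting $Y_n^m:=p_n^m|_{\partial B}$.

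First I would establish the eigenvalue relation. Writing $\boldsymbol x=r\boldsymbol\xi$ with $\boldsymbol\xi\in\partial B$, the Euclidean Laplacian decomposes as
\begin{align*}
\Delta=\partial_r^2+\frac{2}{r}\partial_r+\frac{1}{r^2}\Delta_{\partial B},
\end{align*}
so that applying $\Delta$ to $p_n^m(\boldsymbol x)=r^nY_n^m(\boldsymbol\xi)$ and using $\Delta p_n^m=0$ gives $\big(n(n-1)+2n\big)r^{n-2}Y_n^m+r^{n-2}\Delta_{\partial B}Y_n^m=0$, i.e. $\Delta_{\partial B}Y_n^m=-n(n+1)Y_n^m$.

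Next, orthogonality follows because $\Delta_{\partial B}$ is symmetric on smooth functions, $\int_{\partial B}(\Delta_{\partial B}u)\overline v\,\mathrm{d}\sigma=\int_{\partial B}u\,\overline{\Delta_{\partial B}v}\,\mathrm{d}\sigma$: eigenfunctions attached to distinct eigenvalues $-n(n+1)\ne -n'(n'+1)$ are automatically $L^2(\partial B)$-orthogonal, while within a fixed $n$ the $Y_n^m$ are orthogonal by the choice of basis. The only step requiring genuine work is completeness. By the Stone--Weierstrass theorem the restrictions to $\partial B$ of polynomials are dense in $C(\partial B)$, hence in $L^2(\partial B)$, so it suffices to prove that each such restriction lies in $\mathrm{span}\{Y_n^m\}$. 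This is done by induction on the degree, using the standard decomposition $P_N=Q_N+|\boldsymbol x|^2R_{N-2}$ of a homogeneous polynomial $P_N$ of degree $N$ into a harmonic part $Q_N\in\mathcal{H}_N$ and a remainder $|\boldsymbol x|^2R_{N-2}$ with $R_{N-2}$ homogeneous of degree $N-2$ (that these pieces are complementary is a one-line dimension count: $2N+1$ plus the number of homogeneous polynomials of degree $N-2$ equals the number of degree $N$). Since $|\boldsymbol x|^2=1$ on $\partial B$, one gets $P_N|_{\partial B}=Q_N|_{\partial B}+R_{N-2}|_{\partial B}$ and concludes by the inductive hypothesis.

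I do not expect any real obstacle: this is textbook material, and in the final writeup I would either include the completeness argument sketched above or simply cite a standard reference such as \cite{Colton2019Inverse}. If an orthonormal rather than merely orthogonal family is wanted later, the normalization can be chosen so that $\|Y_n^m\|_{L^2(\partial B)}=1$.
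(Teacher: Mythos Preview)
Your argument is correct and is the standard textbook proof. The paper itself states this lemma without proof, treating it as a classical fact (indeed it is standard material, as you note, found e.g.\ in \cite{Colton2019Inverse}); so there is nothing to compare, and your proposal would serve perfectly well as a self-contained justification if one were desired.
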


The following lemma addresses the spectral system of $\mathscr{S}_{\partial B}^{k}$ and  $\mathscr{K}_{\partial B}^{k,*}$ as seen in \eqref{Single} and \eqref{NP}.
\begin{lemm}\label{le:SK}
The eigenvalues of  $\mathscr{S}_{\partial B}^{k}$ and  $\mathscr{K}_{\partial B}^{k,*}$ on $\partial B$ corresponding to the spherical harmonic functions $Y^m_n$ with $n\in \mathbb{N},|m|\leq n$ are given, respectively, by
\begin{align}\label{E:SP}
\mathscr{S}_{\partial B}^{k}[Y^m_n]=\xi_n(k)Y^m_n,\quad\mathscr{K}_{\partial B}^{k,*}[Y^m_n]=\zeta_n(k) Y^m_n
\end{align}
with
\begin{align}
\xi_n(k)&=-\mathrm{i}kj_n(k)h_n(k),\label{G:xin}\\
\zeta_n(k)&=\frac{1}{2}-\mathrm{i}k^2j'_n(k)h_{n}(k)=-\frac{1}{2}-\mathrm{i}k^2j_n(k)h'_n(k).\label{G:zetan}
\end{align}

Moreover, for fixed $n\in\mathbb{N}$ and $0<k\ll1$, $\xi_{n}(k)$ and $\zeta_n(k)$ have the following asymptotic expansions:
\begin{align}
\xi_0(k)&=-1-\mathrm{i}k+\frac{2}{3}k^2+\mathcal{O}(k^3),\label{E:xi0}\\
\xi_{n}(k)&=-\frac{1}{2n+1}-\frac{2}{(2n+3)(2n+1)(2n-1)}k^2+\mathcal{O}(k^3),\quad n\geq1,\label{E:xin}\\
\zeta_0(k)&=\frac{1}{2}+\frac{1}{3}k^2+\mathcal{O}(k^3),\label{E:zeta0}\\
\zeta_n(k)&=\frac{1}{2(2n+1)}-\frac{1}{(2n+3)(2n+1)(2n-1)}k^2+\mathcal{O}(k^3),\quad n\geq1.\label{E:zetan}
\end{align}
\end{lemm}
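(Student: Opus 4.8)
The plan is to diagonalise $\mathscr{S}^{k}_{\partial B}$ and $\mathscr{K}^{k,*}_{\partial B}$ in the basis $\{Y^m_n\}$ by solving one scalar transmission problem per mode, and then to read off \eqref{E:xi0}--\eqref{E:zetan} from the series for $j_n$, $h_n$ recorded above. Since $\partial B$ is a sphere and both kernels depend on $|\tilde{\boldsymbol x}-\tilde{\boldsymbol y}|$ only, the operators commute with rotations, so by Lemma \ref{le:harmonic} the $Y^m_n$ are eigenfunctions; write $\mathscr{S}^{k}_{\partial B}[Y^m_n]=\xi_n(k)Y^m_n$ and $\mathscr{K}^{k,*}_{\partial B}[Y^m_n]=\zeta_n(k)Y^m_n$.

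To pin down $\xi_n(k)$ I would use that $u:=\mathscr{S}^{k}_{\partial B}[Y^m_n]$ solves $\Delta u+k^2u=0$ in $B$ and in $\mathbb{R}^3\setminus\overline B$, is bounded at the origin and outgoing at infinity; hence by separation of variables $u=a_n\,j_n(k|\tilde{\boldsymbol x}|)Y^m_n(\widehat{\tilde{\boldsymbol x}})$ inside $B$ and $u=b_n\,h_n(k|\tilde{\boldsymbol x}|)Y^m_n(\widehat{\tilde{\boldsymbol x}})$ outside. Continuity of the single layer potential across $\partial B$ gives $a_n j_n(k)=b_n h_n(k)$, while the jump of the conormal (here radial) derivative, which from \eqref{Jump} reads $\nabla u\cdot\boldsymbol\nu|_+-\nabla u\cdot\boldsymbol\nu|_-=Y^m_n$, gives $k\big(b_n h_n'(k)-a_n j_n'(k)\big)=1$. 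Solving this $2\times2$ system with the Wronskian identity $j_n(z)y_n'(z)-j_n'(z)y_n(z)=z^{-2}$ (equivalently $j_n(z)h_n'(z)-j_n'(z)h_n(z)=\mathrm{i}z^{-2}$) yields $a_n=-\mathrm{i}k\,h_n(k)$, $b_n=-\mathrm{i}k\,j_n(k)$, whence $\xi_n(k)=a_n j_n(k)=-\mathrm{i}k\,j_n(k)h_n(k)$, i.e. \eqref{G:xin}.

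For $\zeta_n(k)$ I would compute the one-sided conormal derivatives of the now-explicit $u$ and invoke \eqref{Jump}. From inside, $\nabla u\cdot\boldsymbol\nu|_-=\partial_r\big(a_n j_n(kr)\big)\big|_{r=1}Y^m_n=a_n k\,j_n'(k)Y^m_n=-\mathrm{i}k^2 j_n'(k)h_n(k)Y^m_n$, which by \eqref{Jump} equals $\big(-\tfrac12\mathcal I+\mathscr{K}^{k,*}_{\partial B}\big)[Y^m_n]=\big(\zeta_n(k)-\tfrac12\big)Y^m_n$; hence $\zeta_n(k)=\tfrac12-\mathrm{i}k^2 j_n'(k)h_n(k)$. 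The exterior trace gives $\zeta_n(k)=-\tfrac12-\mathrm{i}k^2 j_n(k)h_n'(k)$ in the same way, and the two expressions agree again by the Wronskian. This is \eqref{G:zetan}; the principal value in \eqref{NP} needs no separate treatment since it is already built into the jump formula and the eigenfunctions are smooth.

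Finally, \eqref{E:xi0}--\eqref{E:zetan} follow by substituting \eqref{E:bessel}, \eqref{E:hankel1}, \eqref{E:hankel} and the termwise derivative of \eqref{E:bessel} (namely $j_n'(z)=\tfrac{z^{n-1}}{(2n+1)!!}\big(n-\tfrac{(n+2)z^2}{2(2n+3)}+\mathcal O(z^3)\big)$, and $j_0'(z)=-\tfrac{z}{3}+\mathcal O(z^3)$) into \eqref{G:xin}--\eqref{G:zetan} and expanding through order $k^2$. For $n=0$ one uses $h_0$ from \eqref{E:hankel1} and the identity $\tfrac12+\tfrac16=\tfrac23$ for the $k^2$-term of $\xi_0$; for $n\geq1$ the ratio $(2n-1)!!/(2n+1)!!=1/(2n+1)$ gives the leading coefficients, and the elementary cancellations $\tfrac{1}{2(2n-1)}-\tfrac{1}{2(2n+3)}=\tfrac{2}{(2n-1)(2n+3)}$ (for $\xi_n$) and $\tfrac{n}{2n-1}-\tfrac{n+2}{2n+3}=\tfrac{2}{(2n-1)(2n+3)}$ (for $\zeta_n$) produce the stated $k^2$-coefficients. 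I do not expect a genuine obstacle; the only points requiring mild care are the legitimacy of the mode-by-mode transmission argument (standard for the sphere) and the bookkeeping in the $\zeta_n$ expansion, where one must differentiate the $j_n$-series before multiplying.
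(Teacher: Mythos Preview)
Your proposal is correct. The paper's own proof simply cites Theorem~3.2 in \cite{li2018anomalous} for \eqref{E:SP}--\eqref{G:zetan} and then invokes \eqref{E:bessel}--\eqref{E:hankel} for the asymptotics; your transmission-problem derivation (separation of variables, continuity plus the jump \eqref{Jump}, and the Wronskian $j_n h_n'-j_n' h_n=\mathrm{i}z^{-2}$) is precisely the standard argument behind that reference, so you are spelling out what the paper outsources rather than taking a different route.
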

\begin{proof}
Formulas \eqref{E:SP}--\eqref{G:zetan}  follow from Theorem 3.2 in \cite{li2018anomalous}. In addition, thanks to \eqref{E:bessel}--\eqref{E:hankel}, one has \eqref{E:xi0}--\eqref{E:zetan}.

The proof is complete.
\end{proof}

Next let us  define the vectorial spherical harmonics of order $n$ as follows:
\begin{align*}
\mathcal{I}^m_n&=\nabla_{\partial B} Y^m_{n+1}+(n+1)Y^m_{n+1}\boldsymbol \nu, &&n\geq0,|m|\leq n+1,\\
\mathcal{T}^m_n&=\nabla_{\partial B}Y^m_{n+1}\times\boldsymbol\nu,&&n\geq 1, |m|\leq n,\\
\mathcal{N}^m_n&=-\nabla_{\partial B}Y^m_{n-1}+nY^m_{n-1}\boldsymbol\nu,&&n\geq1,|m|\leq n-1.
\end{align*}
The family $(\mathcal{I}^m_n,\mathcal{T}^m_n,\mathcal{N}^m_n)$ forms an orthogonal basis of $L^2(\partial B)^3$.

Observe that
\begin{align*}
\boldsymbol\nu\cdot\mathcal{I}^{m}_{n}&=\boldsymbol\nu\cdot\big(\nabla_{\partial B} Y^m_{n+1}+(n+1)Y^m_{n+1}\boldsymbol \nu\big)=(n+1)Y^m_{n+1},\\
\boldsymbol\nu\cdot\mathcal{T}^{m}_{n}&=\boldsymbol\nu\cdot\big(\nabla_{\partial B}Y^m_{n+1}\times\boldsymbol\nu\big)=0,\\
\boldsymbol\nu\cdot\mathcal{N}^{m}_{n}&=\boldsymbol\nu\cdot\big(-\nabla_{\partial B}Y^m_{n-1}+nY^m_{n-1}\boldsymbol\nu\big)=nY^m_{n-1}.
\end{align*}

The following Lemmas \ref{le:S} and \ref{le:traction} address some facts shown in \cite{deng2020spectral} to be used in the sequel.
\begin{lemm}\label{le:S}
The single layer potential operator $\mathbf{S}_{\partial B}^{k}$ on $\partial B$ satisfies that
\begin{align}
\mathbf{S}^{k}_{\partial B}[\mathcal {T}^{m}_n]&=b_n(k)\mathcal {T}^{m}_n,\nonumber\\
\mathbf{S}^{k}_{\partial B}[\mathcal{I}_{n-1}^m]&=c_{1n}(k)\mathcal{I}^m_{n-1}+d_{1n}(k)\mathcal {N}^m_{n+1},\label{E:SI}\\
\mathbf{S}^{k}_{\partial B}[\mathcal {N}^m_{n+1}]&=c_{2n}(k)\mathcal{I}^{m}_{n-1}+d_{2n}(k)\mathcal{N}^{m}_{n+1}.\label{E:SN}
\end{align}
Here
\begin{align*}
b_n(k)&=-\frac{\mathrm{i}k_s j_n(k_s)h_n(k_s)}{\mu},\\
c_{1n}(k)&=-\mathrm{i}\left(\frac{j_{n-1}(k_s)h_{n-1}(k_s)k_s(n+1)}{\mu(2n+1)}+\frac{j_{n-1}(k_p)h_{n-1}(k_p)k_p n}{(\lambda+2\mu)(2n+1)}\right),\\
d_{1n}(k)&=-\mathrm{i}\left(\frac{j_{n-1}(k_s)h_{n+1}(k_s)k_sn}{\mu(2n+1)}-\frac{j_{n-1}(k_p)h_{n+1}(k_p)k_p n}{(\lambda+2\mu)(2n+1)}\right),\\
c_{2n}(k)&=-\mathrm{i}\left(\frac{j_{n+1}(k_s)h_{n-1}(k_s)k_s(n+1)}{\mu(2n+1)}-\frac{j_{n+1}(k_p)h_{n-1}(k_p)k_p (n+1)}{(\lambda+2\mu)(2n+1)}\right),
\end{align*}
and
\begin{align*}
d_{2n}(k)&=-\mathrm{i}\left(\frac{j_{n+1}(k_s)h_{n+1}(k_s)k_sn}{\mu(2n+1)}+\frac{j_{n+1}(k_p)h_{n+1}(k_p)k_p(n+1) }{(\lambda+2\mu)(2n+1)}\right),
\end{align*}
where
\begin{align}\label{E:wave-ps}
k_s=\frac{k}{c_s}=\frac{k}{\sqrt{\mu}},\quad k_p=\frac{k}{c_p}=\frac{k}{\sqrt{\lambda+2\mu}}.
\end{align}
\end{lemm}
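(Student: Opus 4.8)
The plan is to compute the action of the elastic single layer potential $\mathbf{S}^{k}_{\partial B}$ on the vectorial spherical harmonics $\mathcal{T}^m_n$, $\mathcal{I}^m_{n-1}$ and $\mathcal{N}^m_{n+1}$ directly from the Helmholtz decomposition of the Kupradze matrix $\boldsymbol\Gamma^{\tilde k}$ in \eqref{Fundamental2}. The key observation is that $\boldsymbol\Gamma^{\tilde k}$ splits into a shear part built from the scalar fundamental solution at wavenumber $k_s$ and a pressure part built from $\nabla\nabla$ applied to the difference of scalar fundamental solutions at $k_p$ and $k_s$; hence $\mathbf{S}^{k}_{\partial B}$ can be written in terms of the scalar single layer potentials $\mathscr{S}^{k_s}_{\partial B}$, $\mathscr{S}^{k_p}_{\partial B}$ together with tangential and normal differential operators. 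Since $\mathcal{T}^m_n$ is a pure surface-curl field (divergence-free, tangential) while $\mathcal{I}^m_{n-1}$ and $\mathcal{N}^m_{n+1}$ span the complementary two-dimensional space of "poloidal" fields attached to $Y^m_n$, the operator $\mathbf{S}^{k}_{\partial B}$ is block-diagonal: it acts as a scalar on each $\mathcal{T}^m_n$ and as a $2\times2$ matrix on $\mathrm{span}\{\mathcal{I}^m_{n-1},\mathcal{N}^m_{n+1}\}$. This is exactly the structure asserted in \eqref{E:SI}--\eqref{E:SN}.

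First I would recall (from \cite{deng2020spectral}, as the statement permits) the expansion of the outgoing scalar fundamental solution $\mathscr{G}^{\tilde k}(\tilde{\boldsymbol x},\tilde{\boldsymbol y})$ in spherical harmonics on concentric spheres, $\mathscr{G}^{\tilde k}(\tilde{\boldsymbol x},\tilde{\boldsymbol y}) = -\mathrm{i}k\sum_{n,m} j_n(k|\tilde{\boldsymbol x}|)h_n(k|\tilde{\boldsymbol y}|)\overline{Y^m_n(\hat{\tilde{\boldsymbol x}})}Y^m_n(\hat{\tilde{\boldsymbol y}})$ for $|\tilde{\boldsymbol x}|<|\tilde{\boldsymbol y}|$ (with the roles swapped otherwise), which already underlies the scalar eigenvalue formula \eqref{G:xin}. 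Restricting to $\partial B$ (the unit sphere) and using the addition theorem, the scalar single layer potential has eigenvalue $\xi_n(k)=-\mathrm{i}kj_n(k)h_n(k)$. Then I would apply $\mathbf{S}^{k}_{\partial B}$ term-by-term: for the $\mathcal{T}$-type field the $\nabla\nabla$-part annihilates it (a gradient-of-gradient kernel acting on a surface-curl density, or equivalently the curl-free nature of that piece), so only the $-\frac{1}{4\pi\mu|\cdot|}\boldsymbol{\mathcal I}$ part at wavenumber $k_s$ survives, giving the scalar eigenvalue $b_n(k) = -\mathrm{i}k_sj_n(k_s)h_n(k_s)/\mu$. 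For $\mathcal{I}^m_{n-1}$ and $\mathcal{N}^m_{n+1}$, I would expand each as a combination of $\nabla$ of the harmonic extensions of $Y^m_n$ (interior and exterior solid harmonics) restricted to $\partial B$; applying the scalar layer potentials and the gradient operators produces again combinations of $\nabla(\text{solid harmonic of degree }n)$, which re-expand in the same two basis vectors $\mathcal{I}^m_{n-1},\mathcal{N}^m_{n+1}$. Collecting the Bessel/Hankel factors $j_{n\pm1}(k_s)h_{n\pm1}(k_s)$ and $j_{n\pm1}(k_p)h_{n\pm1}(k_p)$ with the combinatorial weights $n$, $n+1$, $2n+1$ coming from the spherical-harmonic gradient identities yields precisely the stated $c_{1n},d_{1n},c_{2n},d_{2n}$.

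The main obstacle is the bookkeeping for the mixed $\nabla\nabla$ term acting on the poloidal fields: one must carefully track how $\nabla$ applied to $j_m(k_\bullet r)Y$ or $h_m(k_\bullet r)Y$ raises and lowers the spherical-harmonic index (via the recurrences \eqref{E:recu1}--\eqref{E:recu2} and the identities $\boldsymbol\nu\cdot\mathcal{I}^m_n=(n+1)Y^m_{n+1}$, $\boldsymbol\nu\cdot\mathcal{N}^m_n=nY^m_{n-1}$ noted just before Lemma \ref{le:S}), and then how the difference $\mathscr{S}^{k_p}-\mathscr{S}^{k_s}$ combined with the $1/(4\pi\tilde k^2)=1/(4\pi k^2\cdot\text{const})$ prefactor reorganizes into the displayed off-diagonal coefficients. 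Since the identities \eqref{E:SI}--\eqref{E:SN} are quoted from \cite{deng2020spectral}, I would not reproduce this computation in full; instead I would indicate that the block-diagonal structure follows from the Helmholtz decomposition of $\boldsymbol\Gamma^{\tilde k}$ and the orthogonality of the toroidal versus poloidal subspaces under $\mathbf{S}^{k}_{\partial B}$, and refer to \cite{deng2020spectral} for the explicit coefficient formulas, checking only that the normalizations match our scaling conventions \eqref{E:transform} and \eqref{E:wave-ps}.
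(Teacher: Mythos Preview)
Your proposal is correct and, in fact, more detailed than what the paper provides: the paper does not prove this lemma at all but simply quotes it from \cite{deng2020spectral} (see the sentence preceding Lemma~\ref{le:S}). Your sketch of the Helmholtz decomposition of $\boldsymbol\Gamma^{\tilde k}$ into shear and pressure parts, the block-diagonal structure with respect to the toroidal/poloidal splitting, and the index-shifting via the spherical-harmonic gradient identities is exactly the mechanism behind the cited result, so your approach is consistent with---and strictly more informative than---the paper's treatment.
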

\begin{lemm}\label{le:traction}
The tractions of the single layer potentials $\mathbf{S}^{k}_{\partial B}[\mathcal {T}^{m}_n],\mathbf{S}^{k}_{\partial B}[\mathcal {I}^{m}_{n-1}]$ and $\mathbf{S}^{k}_{\partial B}[\mathcal {N}^{m}_{n+1}]$ on $\partial B$ satisfy that
\begin{align}
\frac{\partial}{\partial\boldsymbol{\nu}}\mathbf{S}^{k}_{\partial B}[\mathcal{T}^{m}_{n}]\big|_{+}&=\mathfrak{b}_{n}(k)\mathcal{T}^{m}_{n},\nonumber\\
\frac{\partial}{\partial\boldsymbol{\nu}}\mathbf{S}^{k}_{\partial B}[\mathcal{I}^{m}_{n-1}]\big|_{+}&=\mathfrak{c}_{1n}(k)\mathcal{I}^{m}_{n-1}+\mathfrak{d}_{1n}(k)\mathcal{N}^m_{n+1},\label{E:PSI}\\
\frac{\partial}{\partial\boldsymbol{\nu}}\mathbf{S}^{k}_{\partial B}[\mathcal{N}^{m}_{n+1}]\big|_{+}&=\mathfrak{c}_{2n}(k)\mathcal{I}^{m}_{n-1}+\mathfrak{d}_{2n}(k)\mathcal{N}^m_{n+1}.\label{E:PSN}
\end{align}
Here
\begin{align*}
\mathfrak{b}_n(k)=&-\mathrm{i}k_sj_n(k_s)(k_sh'_{n}(k_s)-h_n(k_s)),\\
\mathfrak{c}_{1n}(k)=&-2(n-1)\mathrm{i}\left(\frac{j_{n-1}(k_s)h_{n-1}(k_s)k_s(n+1)}{2n+1}+\frac{j_{n-1}(k_p)h_{n-1}(k_p)k_p\mu n}{(\lambda+2\mu)(2n+1)}\right)\\
&+\mathrm{i}\left(\frac{j_{n-1}(k_s)h_{n}(k_s)k^2_s(n+1)+j_{n-1}(k_p)h_{n}(k_p)k^2_pn}{2n+1}\right),\\
\mathfrak{d}_{1n}(k)=&2n(n+2)\mathrm{i}\left(\frac{j_{n-1}(k_s)h_{n+1}(k_s)k_s}{2n+1}-\frac{j_{n-1}(k_p)h_{n+1}(k_p)k_p\mu }{(\lambda+2\mu)(2n+1)}\right)\\
&+n\mathrm{i}\left(\frac{-j_{n-1}(k_s)h_{n}(k_s)k^2_s+j_{n-1}(k_p)h_{n}(k_p)k^2_p}{2n+1}\right),\\
\mathfrak{c}_{2n}(k)=&-2(n^2-1)\mathrm{i}\left(\frac{j_{n+1}(k_s)h_{n-1}(k_s)k_s}{2n+1}-\frac{j_{n+1}(k_p)h_{n-1}(k_p)k_p\mu}{(\lambda+2\mu)(2n+1)}\right)\\
&-(n+1)\mathrm{i}\left(\frac{-j_{n-1}(k_s)h_{n}(k_s)k^2_s+j_{n-1}(k_p)h_{n}(k_p)k^2_p}{2n+1}\right),
\end{align*}
and
\begin{align*}
\mathfrak{d}_{2n}(k)=&2(n+2)\mathrm{i}\left(\frac{j_{n+1}(k_s)h_{n+1}(k_s)k_sn}{2n+1}+\frac{j_{n+1}(k_p)h_{n+1}(k_p)k_p\mu(n+1)}{(\lambda+2\mu)(2n+1)}\right)\\
&-\mathrm{i}\left(\frac{j_{n+1}(k_s)h_{n}(k_s)k^2_sn+j_{n+1}(k_p)h_{n}(k_p)k^2_p(n+1)}{2n+1}\right),
\end{align*}
where $k_s$ and $k_p$ are given in \eqref{E:wave-ps}.
\end{lemm}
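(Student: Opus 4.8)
These identities are recorded in \cite{deng2020spectral}; here I indicate how the computation proceeds. The guiding principle is that on the unit sphere the elastic single layer potential is block-diagonalized by the Helmholtz wave functions, so the problem reduces to radial bookkeeping. First I would recall that the Kupradze matrix $\boldsymbol\Gamma^{k}$ splits into an irrotational part built from the scalar wave functions $f_\ell(k_p r)Y^m_\ell$ and a solenoidal part built from $f_\ell(k_s r)Y^m_\ell$, with $k_s,k_p$ as in \eqref{E:wave-ps}. Consequently $\mathbf{S}^{k}_{\partial B}[\boldsymbol\varphi]$ solves $\mathcal{L}_{\lambda,\mu}\mathbf{u}+k^{2}\mathbf{u}=0$ in $B$ and, together with the radiation condition, in $\mathbb{R}^{3}\setminus\overline{B}$; hence it equals a finite linear combination of the standard longitudinal and transverse vector spherical wave functions
\[
\nabla\!\big(f_\ell(k_p r)Y^m_\ell\big),\qquad \nabla\times\!\big(f_\ell(k_s r)Y^m_\ell\,\boldsymbol x\big),\qquad \nabla\times\nabla\times\!\big(f_\ell(k_s r)Y^m_\ell\,\boldsymbol x\big),
\]
with $f_\ell=j_\ell$ inside $B$ and $f_\ell=h_\ell$ outside. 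Restricted to $\partial B$ these reduce, up to explicit scalar factors, to the vectorial spherical harmonics: the toroidal family to the $\mathcal{T}^m_n$, and the irrotational/poloidal pair to the two-dimensional span of $\mathcal{I}^m_{n-1}$ and $\mathcal{N}^m_{n+1}$ (via the Laplace--Beltrami eigenvalues of Lemma \ref{le:harmonic}).

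Next I would fix all the coefficients in these interior and exterior representations. Since $\mathbf{S}^{k}_{\partial B}[\boldsymbol\varphi]$ is continuous across $\partial B$, matching its trace against the boundary values already recorded in Lemma \ref{le:S} determines everything: the toroidal input $\mathcal{T}^m_n$ excites only $\nabla\times(f_\ell(k_s r)Y^m_\ell\,\boldsymbol x)$, with the exterior amplitude fixed by $\mathbf{S}^{k}_{\partial B}[\mathcal{T}^m_n]=b_n(k)\mathcal{T}^m_n$, whereas $\mathcal{I}^m_{n-1}$ and $\mathcal{N}^m_{n+1}$ couple an irrotational and a poloidal mode, with the $2\times2$ amplitude matrix read off from \eqref{E:SI}--\eqref{E:SN}.

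Then I would apply the conormal (traction) derivative $\partial/\partial\boldsymbol\nu$ to the exterior representation and take the trace from outside at $r=1$ (equivalently, compute the interior trace and add $\boldsymbol\varphi$ via the jump relation \eqref{Jump2}). The radial part of the derivative turns each factor $h_\ell(k_s r)$ or $h_\ell(k_p r)$ into $k_s h'_\ell(k_s)$ or $k_p h'_\ell(k_p)$, while the tangential part of the strain tensor acts on $\partial B$ only through the Laplace--Beltrami operator and hence contributes only the eigenvalue $-\ell(\ell+1)$ from Lemma \ref{le:harmonic}. Collecting the $s$- and $p$-contributions and using the recurrences \eqref{E:recu1} and \eqref{E:recu2} to trade $j'_\ell$ and $h'_\ell$ for $j_{\ell\pm1}$ and $h_{\ell\pm1}$, one recovers the stated formulas for $\mathfrak{b}_n,\mathfrak{c}_{1n},\mathfrak{d}_{1n},\mathfrak{c}_{2n},\mathfrak{d}_{2n}$. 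A quick consistency check in the toroidal channel: the answer must equal $\mu\,b_n(k)\,\big(k_sh'_n(k_s)-h_n(k_s)\big)/h_n(k_s)$, the radial logarithmic-derivative factor carried by the exterior toroidal solution, and since $\mu\,b_n(k)/h_n(k_s)=-\mathrm{i}k_s j_n(k_s)$ this is exactly the claimed $\mathfrak{b}_n(k)=-\mathrm{i}k_s j_n(k_s)\big(k_sh'_n(k_s)-h_n(k_s)\big)$.

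The main obstacle is organizational rather than conceptual. The traction mixes the longitudinal and transverse parts, so the $2\times2$ coupling between $\mathcal{I}^m_{n-1}$ and $\mathcal{N}^m_{n+1}$ must be carried through the differentiation of the vector wave functions on $\partial B$, including the index shifts relating the labels of the vectorial spherical harmonics to the radial orders $n-1$ and $n+1$ that appear in the coefficients; the relative signs of the $p$- and $s$-contributions also have to be tracked with care. The toroidal case serves as a clean warm-up, and the $\mathcal{I}$/$\mathcal{N}$ case, though computationally heavy, is entirely routine once the tractions of the elementary vector wave functions on the unit sphere are tabulated --- or one may simply invoke \cite{deng2020spectral}.
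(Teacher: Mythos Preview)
Your proposal is correct and, in fact, goes beyond what the paper does: the paper offers no proof of this lemma at all, simply stating that Lemmas \ref{le:S} and \ref{le:traction} ``address some facts shown in \cite{deng2020spectral}.'' Your outline --- representing $\mathbf{S}^{k}_{\partial B}[\boldsymbol\varphi]$ as a combination of longitudinal and transverse vector spherical wave functions, fixing the exterior amplitudes via continuity and Lemma \ref{le:S}, then applying the traction operator and reducing with \eqref{E:recu1}--\eqref{E:recu2} --- is precisely the computation carried out in that reference, so there is nothing to compare against here.
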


In the following two lemmas we present the properties of the operators $\boldsymbol\nu \cdot\mathbf{S}_{\partial B}^{k}$ and $\boldsymbol\nu \cdot\big(\frac{1}{2}\mathbf{I}+\mathbf{K}^{k,*}_{\partial B}\big)$ on $\partial B$, respectively.

\begin{lemm}\label{le:NUS}
The operator $\boldsymbol\nu \cdot\mathbf{S}_{\partial B}^{k}$ on $\partial B$ satisfies that
\begin{align}\label{E:NUS}
\boldsymbol\nu \cdot\mathbf{S}_{\partial B}^{k}[Y^m_n \boldsymbol\nu]=\eta_n(k)Y^m_n
\end{align}
with
\begin{align}
\eta_n(k)
=\frac{n(c_{1n}(k)+c_{2n}(k))+(n+1)(d_{1n}(k)+d_{2n}(k))}{2n+1},\label{G:etan}
\end{align}
where $c_{1n}(k), c_{2n}(k), d_{1n}(k)$ and $d_{2n}(k)$ are defined in Lemma \ref{le:S}.

Furthermore, for fixed $n\in\mathbb{N}$ and $0<k\ll1$, $\eta_{n}(k)$ has the following asymptotic expansion:
\begin{align}
\eta_0(k)&=-\frac{1}{3(\lambda+2\mu)}-\frac{2}{15(\lambda+2\mu)}k^2_p+\mathcal{O}(k^3),\label{S:eta0}\\
%\eta_1(k)&=-\frac{1}{3(\lambda+2\mu)}+\mathcal{O}(k),\label{S:eta1}\\
\eta_n(k)&=-\frac{2(\lambda+\mu)n(n+1)+\mu(4n^4+4n-1)}{\mu(\lambda+2\mu)(2n+3)(2n+1)(2n-1)}+\mathcal{O}(k),\quad n\geq1.\label{S:eta}
\end{align}
\end{lemm}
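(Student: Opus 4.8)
The plan is to write $Y^m_n\boldsymbol\nu$ in the vectorial spherical harmonic basis, push it through the spectral identities of Lemma \ref{le:S}, and project back onto $Y^m_n$; this produces $\eta_n(k)$ in closed form, and the two asymptotic expansions then follow by inserting the series representations of the spherical Bessel and Hankel functions. Concretely, since $\nabla_{\partial B}$ is tangential, the definitions $\mathcal{I}^m_{n-1}=\nabla_{\partial B}Y^m_n+nY^m_n\boldsymbol\nu$ and $\mathcal{N}^m_{n+1}=-\nabla_{\partial B}Y^m_n+(n+1)Y^m_n\boldsymbol\nu$ add up to $Y^m_n\boldsymbol\nu=\tfrac{1}{2n+1}\bigl(\mathcal{I}^m_{n-1}+\mathcal{N}^m_{n+1}\bigr)$ for $n\geq1$, whereas for $n=0$ one has $\nabla_{\partial B}Y^0_0=0$, so $Y^0_0\boldsymbol\nu=\mathcal{N}^0_1$. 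Applying $\mathbf{S}^{k}_{\partial B}$ and using \eqref{E:SI}--\eqref{E:SN} (which create no $\mathcal{T}$-component), then pairing with $\boldsymbol\nu$ via $\boldsymbol\nu\cdot\mathcal{I}^m_{n-1}=nY^m_n$, $\boldsymbol\nu\cdot\mathcal{N}^m_{n+1}=(n+1)Y^m_n$ and $\boldsymbol\nu\cdot\mathcal{T}^m_n=0$, we land precisely on \eqref{E:NUS}--\eqref{G:etan}; for $n=0$ this reduces to $\eta_0(k)=d_{20}(k)$, consistent with \eqref{G:etan} because the prefactor $n$ there kills the $c$-terms and $d_{1n}$ itself carries a factor $n$.

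For the expansions, put $k_s=k/\sqrt\mu$, $k_p=k/\sqrt{\lambda+2\mu}$ and substitute \eqref{E:bessel}, \eqref{E:hankel1}, \eqref{E:hankel} into the expressions of Lemma \ref{le:S}. The case $n=0$ is direct: $\eta_0(k)=d_{20}(k)=-\tfrac{\mathrm{i}}{\lambda+2\mu}\,k_pj_1(k_p)h_1(k_p)$, and $k_pj_1(k_p)h_1(k_p)=\tfrac{1}{3\mathrm{i}}\bigl(1+\tfrac{2}{5} k_p^2+\mathcal{O}(k_p^3)\bigr)$, which gives \eqref{S:eta0}. For $n\geq1$ only the constant $\eta_n(0)$ is needed: the limit $z\,j_m(z)h_m(z)\to\tfrac{1}{\mathrm{i}(2m+1)}$ furnishes $c_{1n}(0)$ and $d_{2n}(0)$, while $z\,j_{n+1}(z)h_{n-1}(z)\to0$ gives $c_{2n}(0)=0$. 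The delicate ingredient is $d_{1n}$: here $z\,j_{n-1}(z)h_{n+1}(z)=\tfrac{2n+1}{\mathrm{i}z^{2}}\bigl(1+\mathcal{O}(z^{3})\bigr)$ is singular as $z\to0$, but its two occurrences at $z=k_s$ and $z=k_p$ carry the weights $\tfrac1\mu$ and $\tfrac1{\lambda+2\mu}$, and since $\mu k_s^2=(\lambda+2\mu)k_p^2=k^2$ the singular parts cancel identically, so $d_{1n}(k)=\mathcal{O}(k)$, i.e. $d_{1n}(0)=0$. Collecting the surviving constants,
\[
\eta_n(0)=\frac{n\,c_{1n}(0)+(n+1)\,d_{2n}(0)}{2n+1}=-\frac{1}{(2n-1)(2n+1)(2n+3)}\left(\frac{2n(n+1)}{\mu}+\frac{2n^{2}+2n-1}{\lambda+2\mu}\right),
\]
which rearranges to \eqref{S:eta}; the remainder is only $\mathcal{O}(k)$, not $\mathcal{O}(k^{2})$, because for $n=1$ the factor $j_0(k_s)h_0(k_s)$ inside $c_{11}(k)$ picks up the linear term of \eqref{E:hankel1}.

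The step that truly needs care is this last cancellation: for $n\geq1$ the separate $p$- and $s$-wave contributions to $d_{1n}$ are each singular as $k\to0$, and only their sum is regular, so one must retain the $\mathcal{O}(1)$ remainders honestly rather than passing to the limit term by term. Everything else — the reduction to Lemma \ref{le:S} and the evaluation of the constants $c_{1n}(0),c_{2n}(0),d_{1n}(0),d_{2n}(0)$ — is routine bookkeeping.
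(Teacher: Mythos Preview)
Your derivation of \eqref{E:NUS}--\eqref{G:etan} and of the $n=0$ expansion \eqref{S:eta0} is exactly the paper's argument: decompose $Y^m_n\boldsymbol\nu$ along $\mathcal{I}^m_{n-1},\mathcal{N}^m_{n+1}$, apply Lemma~\ref{le:S}, and project back via $\boldsymbol\nu\cdot$.

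For the $n\geq1$ asymptotics you take a slightly different tactical route. The paper first uses the recurrence relations \eqref{E:recu1}--\eqref{E:recu2} to rewrite $\eta_n(k)$ as a combination of $j_n h_n$, $j_{n-1}h_{n-1}$, and $j_{n+1}h_{n+1}$ only --- each of which is individually regular at $k=0$ --- and then expands; this sidesteps any singular cancellation. You instead evaluate $c_{1n}(0),c_{2n}(0),d_{1n}(0),d_{2n}(0)$ directly, and correctly isolate the one delicate point: the two pieces of $d_{1n}(k)$ each blow up like $k^{-2}$, but the identity $\mu k_s^2=(\lambda+2\mu)k_p^2=k^2$ forces their leading singularities to cancel, leaving $d_{1n}(k)=\mathcal{O}(k)$. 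Your final closed form for $\eta_n(0)$ is correct and rearranges to the paper's expression (note that the $4n^4$ in \eqref{S:eta} is a typographical slip for $4n^2$, as the paper's own derivation a few lines later confirms). Your remark that the remainder is only $\mathcal{O}(k)$ because $h_0$ contributes a linear term to $c_{11}$ is also on the mark. Both approaches work; the recurrence route is tidier, while yours has the virtue of making explicit exactly where and why the potential singularity is harmless.
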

\begin{proof}
See Appendix \ref{App:A.1}.
\end{proof}

\begin{lemm}\label{le:NUK}
The operator $\boldsymbol\nu \cdot\big(\frac{1}{2}\mathbf{I}+\mathbf{K}^{k,*}_{\partial B}\big)$ on $\partial B$ satisfies that
\begin{align}\label{E:NUK}
\boldsymbol\nu \cdot\big(\frac{1}{2}\mathbf{I}+\mathbf{K}^{k,*}_{\partial B}\big)[Y^m_n \boldsymbol\nu]=\rho_n(k)Y^m_n
\end{align}
with
\begin{align}
\rho_n(k)=&\frac{n(\mathfrak{c}_{1n}(k)+\mathfrak{c}_{2n}(k))+(n+1)(\mathfrak{d}_{1n}(k)+\mathfrak{d}_{2n}(k))}{2n+1},
\label{G:rhon}
\end{align}
where $\mathfrak{c}_{1n}(k),\mathfrak{c}_{2n}(k),\mathfrak{d}_{1n}(k)$ and $\mathfrak{d}_{2n}(k)$ are as seen in Lemma \ref{le:traction}.

In addition, for fixed $n\in\mathbb{N}$ and $0<k\ll1$, $\rho_{n}(k)$ has the following asymptotic expansion:
\begin{align}
\rho_0(k)=&\frac{4\mu}{3(\lambda+2\mu)}-\frac{5\lambda+2\mu}{15(\lambda+2\mu)}k^2_p+\mathcal{O}(k^3),\label{S:rho0}\\
%\rho_1(k)=&\frac{9\lambda+26\mu}{15(\lambda+2\mu)}+\frac{164}{945}k^2_s+\frac{7\lambda-22\mu}{945(\lambda+2\mu)}k^2_p+\mathcal{O}(k^3),\label{S:rho1}\\
\rho_n(k)=&\frac{(\lambda+\mu)n(8n^3+16n^2+4n-1)+\mu(2n+1)(4n^3+12n^2+5n-4)}{(\lambda+2\mu)(2n+3)(2n+1)^2(2n-1)}\nonumber\\
&+\frac{-2n(n+1)(4n^2+4n+33)}{(2n+5)(2n+3)(2n+1)^3(2n-1)(2n-3)}k^2_s\nonumber\\
&+\frac{-2\mu(2n+1)(10n^3+15n^2-19n-12)+(\lambda+2\mu)(2n+5)(2n-3)}{(\lambda+2\mu)(2n+5)(2n+3)(2n+1)^3(2n-1)(2n-3)}k^2_p\nonumber\\
&+\mathcal{O}(k^3),\quad n\geq1.\label{S:rho}
\end{align}
\end{lemm}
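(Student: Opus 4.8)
The plan is to proceed exactly as in the proof of Lemma \ref{le:NUS}, replacing the single layer potential $\mathbf{S}^{k}_{\partial B}$ by its conormal derivative (the half-plus-Neumann-Poincar\'e operator) and feeding in Lemma \ref{le:traction} instead of Lemma \ref{le:S}. First I would record, as in the previous proof, that $\mathcal{I}^{m}_{n-1}=\nabla_{\partial B}Y^m_n+nY^m_n\boldsymbol\nu$ and $\mathcal{N}^{m}_{n+1}=-\nabla_{\partial B}Y^m_n+(n+1)Y^m_n\boldsymbol\nu$, so that $Y^m_n\boldsymbol\nu=\frac{1}{2n+1}\bigl(\mathcal{I}^{m}_{n-1}+\mathcal{N}^{m}_{n+1}\bigr)$. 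Applying $\frac{\partial}{\partial\boldsymbol\nu}\mathbf{S}^{k}_{\partial B}|_+$ to this combination and invoking \eqref{E:PSI}--\eqref{E:PSN}, the tangential parts ($\nabla_{\partial B}Y^m_n$) cancel after taking the normal component $\boldsymbol\nu\cdot(\cdot)$, leaving precisely the normal coefficient $\rho_n(k)Y^m_n$ with $\rho_n(k)$ as in \eqref{G:rhon}. Combined with the jump relation \eqref{Jump2}, namely $\boldsymbol\nu\cdot(\frac12\mathbf I+\mathbf K^{k,*}_{\partial B})[Y^m_n\boldsymbol\nu]=\boldsymbol\nu\cdot\frac{\partial}{\partial\boldsymbol\nu}\mathbf{S}^{k}_{\partial B}[Y^m_n\boldsymbol\nu]|_+$, this yields \eqref{E:NUK}.

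Next I would derive the asymptotic expansions \eqref{S:rho0}--\eqref{S:rho}. For $n=0$ only $\mathfrak{d}_{10}(k)$ and $\mathfrak{d}_{20}(k)$ survive (the terms carrying factors $n$ or $n^2-1$ vanish), and one substitutes the small-argument expansions \eqref{E:bessel}, \eqref{E:hankel1}, \eqref{E:hankel} for $j_1,j_0,h_1,h_0$ together with the derivative relations (e.g. $j_0'=-j_1$, obtainable from \eqref{E:recu1}) to extract the constant term $\frac{4\mu}{3(\lambda+2\mu)}$ and the $k_p^2$-coefficient. For $n\ge1$, the strategy mirrors the $\eta_n$ computation: I would first use the recurrence relations \eqref{E:recu1}, \eqref{E:recu2} to rewrite the sums $\mathfrak{c}_{1n}+\mathfrak{c}_{2n}$ and $\mathfrak{d}_{1n}+\mathfrak{d}_{2n}$ in terms of products $j_{n\pm1}h_{n\pm1}$ and $j_nh_n$, collapse pairs like $(j_{n-1}+j_{n+1})(h_{n-1}+h_{n+1})$ via \eqref{E:recu1}--\eqref{E:recu2} into single-index products, and only then insert the expansions \eqref{E:bessel}, \eqref{E:hankel}. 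Because the leading behaviour of $j_m(k_s)h_m(k_s)k_s$ is $\frac{1}{(2m+1)}+\frac{2k_s^2}{(2m+3)(2m+1)(2m-1)}$ up to $\mathcal O(k^3)$ (and likewise with $k_p$), tracking the $k^0$, $k_s^2$, and $k_p^2$ contributions separately and combining over a common denominator $(2n+5)(2n+3)(2n+1)^3(2n-1)(2n-3)$ produces the stated rational coefficients.

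The main obstacle is the $n\ge1$ bookkeeping: the traction coefficients $\mathfrak{c}_{in},\mathfrak{d}_{in}$ in Lemma \ref{le:traction} are considerably bulkier than the $c_{in},d_{in}$ of Lemma \ref{le:S} — they carry extra factors of $k_s^2,k_p^2$ and linear-in-$n$ multipliers — so after expansion one must reconcile terms at orders $k^{-2}$, $k^0$, and $k^2$, and verify that the singular $k^{-2}$ pieces cancel (they must, since $\rho_n(k)$ is bounded as $k\to0$). Confirming that cancellation, and then correctly assembling the surviving $k^0$ term into $\frac{(\lambda+\mu)n(8n^3+16n^2+4n-1)+\mu(2n+1)(4n^3+12n^2+5n-4)}{(\lambda+2\mu)(2n+3)(2n+1)^2(2n-1)}$ and the two quadratic coefficients over the sextic-in-$n$ common denominator, is the delicate part; everything else is a routine, if lengthy, substitution using \eqref{E:bessel}--\eqref{E:hankel} and \eqref{E:recu1}--\eqref{E:recu2}. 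I would organize the computation by separating $\rho_n(k)$ into its $s$-wave part and $p$-wave part exactly as the coefficients naturally split, handle each with the collapsed recurrence identities, and add at the end.
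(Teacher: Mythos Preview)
Your proposal is correct and follows essentially the same approach as the paper: derive \eqref{E:NUK}--\eqref{G:rhon} via the jump relation \eqref{Jump2} together with \eqref{E:PSI}--\eqref{E:PSN} and the decomposition $Y^m_n\boldsymbol\nu=\tfrac{1}{2n+1}(\mathcal{I}^m_{n-1}+\mathcal{N}^m_{n+1})$, then expand using \eqref{E:bessel}--\eqref{E:hankel} and the recurrences \eqref{E:recu1}--\eqref{E:recu2}. The only cosmetic difference is that the paper organizes the $n\ge1$ bookkeeping by treating $\tfrac{n(\mathfrak{c}_{1n}+\mathfrak{c}_{2n})}{2n+1}$ and $\tfrac{(n+1)(\mathfrak{d}_{1n}+\mathfrak{d}_{2n})}{2n+1}$ separately (with the $k^{-2}$ cancellation occurring in the latter block), whereas you propose splitting into $s$-wave and $p$-wave parts; either grouping leads to the same verification.
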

\begin{proof}
See Appendix \ref{App:A.2}.
\end{proof}
\begin{rema}
By the asymptotic expansions \eqref{E:xi0}--\eqref{E:zetan}, \eqref{S:eta0}, \eqref{S:eta}, \eqref{S:rho0} and \eqref{S:rho} , if $k$ towards to $0^+$,  then the results given by Lemmas \ref{le:SK}, \ref{le:NUS} and \ref{le:NUK} correspond to the static case.
%We can also refer to {\textcolor{red}{literature\cite{deng2019on}}} for the static case.
\end{rema}

\subsection{Time-harmonic approximation of the scattered field}
Based on the above preparations in Subsection \ref{subsec:3.1}, we can obtain the modal decomposition of the operator  $\mathcal{A}(k,\delta)$    defined by  \eqref{Op:A} acting on $L^2(\partial B)^3$.
\begin{prop}\label{prop:A}
If $\boldsymbol \varphi_e$ has the form given by \eqref{varphi-e}, then the operator  $\mathcal{A}(k,\delta)$   acting on $L^2(\partial B)^3$ has the modal decomposition:
\begin{align*}
\mathcal{A}(k,\delta)[\boldsymbol{\varphi}_e]=\sum_{n=0}^{+\infty}\sum_{m=-n}^{n}\lambda_n(k)\langle \varphi_e,Y^{m}_{n}\rangle_{L^{2}(\partial B)}Y^{m}_{n}
\end{align*}
with
\begin{align}\label{E:lambda}
\lambda_{n}(k)=(-\frac{1}{2}+\zeta_n(k_1))\xi^{-1}_n(k_1)\rho_n(k\tau)+\delta\tau^2k^2\eta_n(k\tau),
\end{align}
where $\xi_{n}(k_1),\zeta_{n}(k_1),\eta_n(k\tau),\rho_n(k\tau)$ are defined in \eqref{G:xin}, \eqref{G:zetan}, \eqref{G:etan} and \eqref{G:rhon}, respectively.
\end{prop}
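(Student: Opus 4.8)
The plan is to assemble the modal decomposition of $\mathcal{A}(k,\delta)$ from the modal actions of its three constituent pieces, exploiting the ansatz $\boldsymbol\varphi_e = \varphi_e\boldsymbol\nu$ and the fact that $\{Y^m_n\}$ is an orthogonal basis of $L^2(\partial B)$. Recalling the definition
\begin{align*}
\mathcal{A}(k,\delta)=\big(-\tfrac12\mathcal{I}+\mathscr K^{k_1,*}_{\partial B}\big)(\mathscr S^{k_1}_{\partial B})^{-1}\boldsymbol\nu\cdot\big(\tfrac12\mathbf{I}+\mathbf{K}^{k\tau,*}_{\partial B}\big)+\delta\tau^2k^2\,\boldsymbol\nu\cdot\mathbf{S}^{k\tau}_{\partial B},
\end{align*}
I would first apply it to a single mode $Y^m_n\boldsymbol\nu$ and track each factor. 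By Lemma~\ref{le:NUK}, $\boldsymbol\nu\cdot\big(\tfrac12\mathbf{I}+\mathbf{K}^{k\tau,*}_{\partial B}\big)[Y^m_n\boldsymbol\nu]=\rho_n(k\tau)Y^m_n$, a scalar function on $\partial B$; by Lemma~\ref{le:NUS}, $\boldsymbol\nu\cdot\mathbf{S}^{k\tau}_{\partial B}[Y^m_n\boldsymbol\nu]=\eta_n(k\tau)Y^m_n$. So the second term contributes $\delta\tau^2k^2\eta_n(k\tau)Y^m_n$ immediately.

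For the first term I would then use Lemma~\ref{le:SK}: since $\mathscr S^{k_1}_{\partial B}[Y^m_n]=\xi_n(k_1)Y^m_n$ and (by Lemma~\ref{le:inverse}) $\mathscr S^{k_1}_{\partial B}$ is invertible for small $k$, we have $(\mathscr S^{k_1}_{\partial B})^{-1}[Y^m_n]=\xi_n(k_1)^{-1}Y^m_n$, and then $\big(-\tfrac12\mathcal{I}+\mathscr K^{k_1,*}_{\partial B}\big)[Y^m_n]=\big(-\tfrac12+\zeta_n(k_1)\big)Y^m_n$. Composing these three scalar actions on $\rho_n(k\tau)Y^m_n$ yields $\big(-\tfrac12+\zeta_n(k_1)\big)\xi_n(k_1)^{-1}\rho_n(k\tau)Y^m_n$. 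Adding the second term gives $\mathcal{A}(k,\delta)[Y^m_n\boldsymbol\nu]=\lambda_n(k)Y^m_n$ with $\lambda_n(k)$ exactly as in \eqref{E:lambda}. Finally, expanding $\varphi_e=\sum_{n,m}\langle\varphi_e,Y^m_n\rangle_{L^2(\partial B)}Y^m_n$ and using linearity and (if needed) continuity of $\mathcal{A}(k,\delta)$ on $L^2(\partial B)^3$ to interchange the sum with the operator produces the claimed series.

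The only genuine subtlety — and the step I would be most careful about — is the justification that $(\mathscr S^{k_1}_{\partial B})^{-1}$ really does diagonalize against $Y^m_n$ with eigenvalue $\xi_n(k_1)^{-1}$, i.e. that the spherical-harmonic expansion of $(\mathscr S^{k_1}_{\partial B})^{-1}[\psi]$ is obtained term-by-term. This follows because $\mathscr S^{k_1}_{\partial B}$ is a bounded operator on $L^2(\partial B)$ with the complete system $\{Y^m_n\}$ as eigenfunctions, its inverse exists by Lemma~\ref{le:inverse}, and an inverse of a diagonal operator in an orthonormal basis is diagonal with reciprocal eigenvalues (one must note $\xi_n(k_1)\neq0$ for all $n$ when $k$ is small, which is visible from the asymptotics \eqref{E:xi0}--\eqref{E:xin}). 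One should also remark that $-\tfrac12+\zeta_n(k_1)$ and $\mathscr K^{k_1,*}_{\partial B}$ share the eigenfunctions $Y^m_n$, so the composition stays within a single mode and no off-diagonal coupling arises; the reduction to the scalar quantities $\rho_n, \xi_n, \zeta_n, \eta_n$ is therefore exact. The rest is bookkeeping.
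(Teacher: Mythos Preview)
Your argument is correct and follows the same approach as the paper: expand $\varphi_e$ in the spherical-harmonic basis, apply Lemmas~\ref{le:SK}, \ref{le:NUS}, and \ref{le:NUK} to each mode $Y^m_n\boldsymbol\nu$, and read off the eigenvalue $\lambda_n(k)$. The paper's proof is in fact considerably terser than yours, so your added care about the invertibility of $\mathscr S^{k_1}_{\partial B}$ and the nonvanishing of $\xi_n(k_1)$ is a welcome elaboration rather than a deviation.
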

\begin{proof}
According to \eqref{varphi-e} and Lemma \ref{le:harmonic}, one has %that for $\boldsymbol x\in \partial B$,
\begin{align*}
\boldsymbol{\varphi}_e={\varphi}_e\boldsymbol\nu=\sum_{n=0}^{+\infty}\sum_{m=-n}^{n}\langle\varphi_e,Y^m_n\rangle_{L^2(\partial B)} Y^m_n\boldsymbol\nu.
\end{align*}
With the help the expression \eqref{Op:A} of $\mathcal{A}(k,\delta)$, Lemmas \ref{le:SK}, \ref{le:NUS} and \ref{le:NUK}, we obtain the conclusion of the lemma.

The proof is complete.
\end{proof}

Recall that $B$ is a unit ball containing the origin in $\mathbb{R}^3$.  Denote by $H^s(\partial B)$ the  Sobolev space equipped with the corresponding norm (cf. \cite{liu2013enhanced})
\begin{align*}
\|\psi\|^2_{H^s(\partial B)}=\sum^{+\infty}_{n=0}\sum^n_{m=-n}(1+n(n+1))^s|\langle\psi,Y^{m}_{n}\rangle_{L^{2}(\partial B)}|^2,
\end{align*}
where
\begin{align*}
\psi=\sum^{+\infty}_{n=0}\sum^n_{m=-n}\langle\psi,Y^{m}_{n}\rangle_{L^{2}(\partial B)}Y^{m}_{n}.
\end{align*}
Moreover, using \eqref{E:NN} yields that for $n\gg1$ and $0<k\ll1$,
\begin{align}\label{E:zetaN}
\zeta_n(k)\stackrel{\eqref{G:zetan}}{=}\mathcal{O}(1/n).
\end{align}
For each real number $s\geq0$, there exist $c>0,C>0$ such that for $\psi\in H^{s}(\partial B)$,
\begin{align}\label{E:estimate}
c\|\psi\|_{H^{s}(\partial B)}\leq\|\mathscr{K}^{k,*}_{\partial B}[\psi]\|_{H^{s+1}(\partial B)}\leq C\|\psi\|_{H^{s}(\partial B)}.
\end{align}

Let us establish the decay estimates of Fourier coefficients.

\begin{prop}%\label{prop:decay}
There exists an integer $S\geq1$ such that if $\mathrm{F}\in H^{S}(\partial B)$, then the corresponding Fourier coefficients  satisfy that
\begin{align}
\langle\mathrm{F},Y^{m}_{n}\rangle_{L^{2}(\partial B)}=o(n^{-\frac{S}{2}})\quad\text{as $n\rightarrow+\infty$}.\label{E:decay}
\end{align}
\end{prop}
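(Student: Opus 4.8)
The plan is to reduce the statement to the smoothness of the incident field on $\partial B$ together with the (essentially diagonal) mapping properties of the boundary operators making up $\mathrm{F}$, and then to read off \eqref{E:decay} directly from the definition of the $H^{S}$-norm. Concretely, I would first show that the function $\mathrm{F}$ defined in \eqref{F:incident} belongs to $H^{s}(\partial B)$ for \emph{every} $s\ge 0$, so that any prescribed integer $S\ge 1$ (in particular the one eventually dictated by the modal estimates of the later sections) does the job.

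For the regularity of $\mathrm{F}$, recall from \eqref{E:incident} that the incident field is $\breve{\mathbf u}^{\mathrm{in}}(\tilde{\boldsymbol x})=-\omega^{2}f(\omega)\,\boldsymbol\Gamma^{\tilde k\tau}(\tilde{\boldsymbol x},\mathbf s)\mathbf p$, whose only singularity sits at the source point $\mathbf s$. Since $\mathbf s\in\mathbb R^{3}\setminus\overline D$ (this is built into the source term of \eqref{S:Lame}), $\breve{\mathbf u}^{\mathrm{in}}$ is real-analytic in a neighbourhood of $\partial D$, hence $\mathbf u^{\mathrm{in}}(\boldsymbol x)=\breve{\mathbf u}^{\mathrm{in}}(\boldsymbol z+\epsilon\boldsymbol x)$ is real-analytic, in particular $C^{\infty}$, on $\partial B$. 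Consequently $\mathrm{F}_{1}=\tilde k^{2}\,\boldsymbol\nu\cdot\mathbf u^{\mathrm{in}}$ and $\mathbf F_{2}=-\tfrac{\partial}{\partial\boldsymbol\nu}\mathbf u^{\mathrm{in}}$, hence also the scalar $\boldsymbol\nu\cdot\mathbf F_{2}$, lie in $C^{\infty}(\partial B)\subset H^{s}(\partial B)$ for every $s$. It then remains to control the operator $\mathcal R:=\big(-\tfrac12\mathcal I+\mathscr K^{k_{1},*}_{\partial B}\big)(\mathscr S^{k_{1}}_{\partial B})^{-1}$ acting on $\boldsymbol\nu\cdot\mathbf F_{2}$. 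On the unit ball $\mathcal R$ is diagonal in the basis $\{Y^{m}_{n}\}$, namely $\mathcal R[Y^{m}_{n}]=\big(-\tfrac12+\zeta_{n}(k_{1})\big)\xi_{n}(k_{1})^{-1}Y^{m}_{n}$; by Lemma \ref{le:SK} together with \eqref{E:NN} and \eqref{E:zetaN}, $\xi_{n}(k_{1})=-\tfrac{1}{2n+1}\big(1+\mathcal O(1/n)\big)$ and $\zeta_{n}(k_{1})=\mathcal O(1/n)$ as $n\to\infty$, while the finitely many remaining $n$ are harmless because $\mathscr S^{k_{1}}_{\partial B}$ is invertible by Lemma \ref{le:inverse}. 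Thus the multiplier of $\mathcal R$ is $\mathcal O\big((1+n(n+1))^{1/2}\big)$, so $\|\mathcal R[\psi]\|_{H^{s}(\partial B)}\le C\|\psi\|_{H^{s+1}(\partial B)}$ for all $s$; applying this with $\psi=\boldsymbol\nu\cdot\mathbf F_{2}$ gives $\mathcal R[\boldsymbol\nu\cdot\mathbf F_{2}]\in H^{s}(\partial B)$, and since $\mathrm{F}_{1}\in C^{\infty}(\partial B)$ as well, $\mathrm{F}\in H^{s}(\partial B)$ for every $s$; in particular $\mathrm{F}\in H^{S}(\partial B)$.

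With $\mathrm{F}\in H^{S}(\partial B)$ at hand, \eqref{E:decay} follows immediately from the defining identity
\[
\sum_{n=0}^{+\infty}\sum_{m=-n}^{n}(1+n(n+1))^{S}\big|\langle\mathrm{F},Y^{m}_{n}\rangle_{L^{2}(\partial B)}\big|^{2}=\|\mathrm{F}\|_{H^{S}(\partial B)}^{2}<\infty .
\]
Convergence of this series forces its tails to vanish, so that for every $n\ge 1$ and every $|m|\le n$,
\[
(1+n(n+1))^{S}\big|\langle\mathrm{F},Y^{m}_{n}\rangle_{L^{2}(\partial B)}\big|^{2}\ \le\ \sum_{n'\ge n}\sum_{|m'|\le n'}(1+n'(n'+1))^{S}\big|\langle\mathrm{F},Y^{m'}_{n'}\rangle_{L^{2}(\partial B)}\big|^{2}\ \longrightarrow\ 0
\]
as $n\to\infty$. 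Since $(1+n(n+1))^{S}\ge n^{2S}$ for $n\ge1$, this gives $\langle\mathrm{F},Y^{m}_{n}\rangle_{L^{2}(\partial B)}=o(n^{-S})$ uniformly in $m$, which a fortiori implies \eqref{E:decay}.

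The only step carrying genuine content is the Sobolev regularity of $\mathrm{F}$, and within it the delicate point is that $(\mathscr S^{k_{1}}_{\partial B})^{-1}$ loses at most one derivative, i.e.\ maps $H^{s+1}(\partial B)$ boundedly into $H^{s}(\partial B)$; on the radial geometry this reduces to the lower bound $|\xi_{n}(k_{1})|\gtrsim n^{-1}$ supplied by Lemma \ref{le:SK} and \eqref{E:NN} (for a general smooth boundary one would instead invoke that $\mathscr S^{k_{1}}_{\partial B}$ is an elliptic classical pseudodifferential operator of order $-1$ on the compact surface $\partial B$ once $k_{1}$ is small). The passage from $\mathrm{F}\in H^{S}(\partial B)$ to \eqref{E:decay} is completely routine.
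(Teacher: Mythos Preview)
Your proof of the conditional ``$\mathrm{F}\in H^{S}(\partial B)\Rightarrow\langle\mathrm{F},Y^{m}_{n}\rangle=o(n^{-S/2})$'' is correct, but it proceeds quite differently from the paper. The paper factors $\mathrm{F}$ through iterated applications of the Neumann--Poincar\'e operator, writing (for large $n$) $\langle\mathrm{F},Y^{m}_{n}\rangle=(\zeta_{n}(k))^{S}\langle\mathrm{G}^{(S)},Y^{m}_{n}\rangle$ via the decomposition $H^{s}=\mathscr{K}^{k,*}_{\partial B}(H^{s-1})\oplus\ker(\mathscr{K}^{k,*}_{\partial B})$, and then invokes the eigenvalue decay $\zeta_{n}(k)=\mathcal{O}(1/n)$ together with the two-sided estimate \eqref{E:estimate} to bound $\|\mathrm{G}^{(S)}\|_{L^{2}}$ by $\|\mathrm{F}\|_{H^{S}}$. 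You instead read the decay straight off the series definition of $\|\cdot\|_{H^{S}(\partial B)}$: the tail of a convergent nonnegative series dominates each of its terms, hence $(1+n(n+1))^{S}|\langle\mathrm{F},Y^{m}_{n}\rangle|^{2}\to 0$ uniformly in $m$. This is more elementary, avoids the auxiliary estimate \eqref{E:estimate} and the iterated range--kernel decomposition, and in fact yields the sharper bound $o(n^{-S})$ (as does the paper's argument, though the proposition only records $o(n^{-S/2})$). Your additional verification that the specific $\mathrm{F}$ of \eqref{F:incident} actually lies in $H^{s}(\partial B)$ for every $s$ is not part of the paper's proof, but it is a useful complement: the proposition as stated is purely conditional, and the paper never checks the hypothesis.
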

\begin{proof}
Denote by $\ker(\mathscr{K}^{k,*}_{\partial B})$ the kernel of $\mathscr{K}^{k,*}_{\partial B}$. It is clear to see that
\begin{align*}
H^{s}(\partial B)=\mathscr{K}^{k,*}_{\partial B}(H^{s-1}(\partial B))\oplus \ker(\mathscr{K}^{k,*}_{\partial B}),
\end{align*}
where the symbol $\oplus $ is understood in the sense of $L^2$ scalar product.  Since $\mathrm{F}\in H^{S}(\partial B)$, we obtain that for  $\mathrm{G}^{(1)}\in H^{S-1}(\partial B)$,
\begin{align*}
\mathrm{F}=\mathscr{K}^{k,*}_{\partial B}(\mathrm{G}^{(1)})+\mathrm{F}^{(1)}_{\ker}.
\end{align*}
A similar argument yields that $\mathrm{G}^{(1)}=\mathscr{K}^{k,*}_{\partial B}(\mathrm{G}^{(2)})+\mathrm{F}^{(2)}_{\ker}$, which then gives
\begin{align}\label{E:decomposition}
\mathrm{F}=(\mathscr{K}^{k,*}_{\partial B})^{S}(\mathrm{G}^{(S)})+\mathrm{F}^{(1)}_{\ker}.
\end{align}

Furthermore, for $n\gg1,-n\leq m\leq n$,
\begin{align*}
\langle\mathrm{F},Y^{m}_{n}\rangle_{L^{2}(\partial B)}&=\langle\mathscr{K}^{k,*}_{\partial B}(\mathrm{G}^{(1)}),Y^{m}_{n}\rangle_{L^{2}(\partial B)}+\langle\mathrm{F}^{(1)}_{\ker},Y^{m}_{n}\rangle_{L^{2}(\partial B)}\\
&=\zeta_{n}(k)\langle\mathrm{G}^{(1)},Y^{m}_{n}\rangle_{L^{2}(\partial B)}+\langle\mathrm{F}^{(1)}_{\ker},Y^{m}_{n}\rangle_{L^{2}(\partial B)}\\
&=\zeta_{n}(k)\langle\mathrm{G}^{(1)},Y^{m}_{n}\rangle_{L^{2}(\partial B)},
\end{align*}
which leads to
\begin{align*}
\langle\mathrm{F},Y^{m}_{n}\rangle_{L^{2}(\partial B)}=(\zeta_{n}(k))^{S}\langle\mathrm{G}^{(S)},Y^{m}_{n}\rangle_{L^{2}(\partial B)}.
\end{align*}

Because of \eqref{E:decomposition}, we  can deduce
\begin{align*}
\mathscr{K}^{k,*}_{\partial B}(\mathrm{F})=(\mathscr{K}^{k,*}_{\partial B})^{S+1}(\mathrm{G}^{(S)}).
\end{align*}
Then using \eqref{E:estimate} yields that
\begin{align*}
\|(\mathscr{K}^*_{\partial B})^{S+1}(\mathrm{G}^{(S)})\|_{H^{S+1}(\partial B)}\leq C\|\mathrm{F}\|_{H^{S}(\partial B)}.
\end{align*}
In addition,
\begin{align*}
\|\mathrm{G}^{(S)}\|_{L^{2}(\partial B)}\stackrel{\eqref{E:estimate}}{\leq}\frac{1}{c^{S+1}}\|(\mathscr{K}^{k,*}_{\partial B})^{S+1}(\mathrm{G}^{(S)})\|_{{H^{S+1}}(\partial B)}
\leq \frac{C}{c^{S+1}} \|\mathrm{F}\|_{H^{S}(\partial B)}.
\end{align*}
Hence it follows from \eqref{E:zetaN} that
\begin{align*}
|\langle\mathrm{F},Y^{m}_{n}\rangle_{L^{2}(\partial B)}|&\leq|\zeta_{n}(k)|^{S}\|\mathrm{G}^{(S)}\|_{L^{2}(\partial B)}\|Y^{m}_{n}\|_{L^{2}(\partial B)}\\
&\leq \frac{C}{c^{S+1}}|\zeta_{n}(k)|^{S}\|\mathrm{F}\|_{H^{S}(\partial B)}\\
&=o(n^{-\frac{S}{2}})
\end{align*}
as $n\rightarrow+\infty$. Thus formula \eqref{E:decay} holds.

The proof is complete.
\end{proof}

Let us give the modal approximation of the scattered field for the system  \eqref{E:op-eq}.
\begin{prop}\label{Prop:scatting}
Suppose that for an integer $N>1$ large enough,
\begin{align}\label{hy}
\mathrm{F}=\sum_{n=0}^{N}\sum_{m=-n}^{n}\langle\mathrm{F},Y^{m}_{n}\rangle_{L^{2}(\partial B)}Y^{m}_{n}.
\end{align}
If $\boldsymbol \varphi_e$ has the form given by \eqref{varphi-e}, then the modal approximation of the scattered field satisfies that for $0< k\ll1$ and $\boldsymbol x\in\mathbb{R}^3\backslash \overline{B}$,
\begin{align*}
\mathbf{u}^{\mathrm{sca}}(\boldsymbol x)
=&\sum_{n=0}^{N}\sum_{m=-n}^{n}\frac{1}{\lambda_{n}(k)}\langle\mathrm{F},Y^{m}_{n}\rangle_{L^{2}(\partial B)}\mathbf{S}^{k\tau}_{\partial B}[Y^{m}_{n}\boldsymbol \nu](\boldsymbol x),
\end{align*}
where $\lambda_{n}(k),n\geq0$ are as seen in \eqref{E:lambda}.
\end{prop}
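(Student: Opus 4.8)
The plan is to solve the operator equation \eqref{E:op-eq} by expanding everything in the orthogonal basis furnished by Proposition \ref{prop:A}, and then to feed the resulting density back into the integral representation \eqref{Solution2}. First I would note that the hypothesis \eqref{hy} says $\mathrm{F}$ lives in the finite-dimensional span of $\{Y^m_n:0\le n\le N,\ |m|\le n\}$, so its Fourier coefficients vanish for $n>N$. Writing the unknown density in the ansatz form $\boldsymbol\varphi_e=\varphi_e\boldsymbol\nu$ with $\varphi_e=\sum_{n,m}\langle\varphi_e,Y^m_n\rangle_{L^2(\partial B)}Y^m_n$, Proposition \ref{prop:A} gives that the equation $\mathcal{A}(k,\delta)[\varphi_e\boldsymbol\nu]=\frac1\epsilon\mathrm{F}$ decouples mode by mode into the scalar relations $\lambda_n(k)\langle\varphi_e,Y^m_n\rangle_{L^2(\partial B)}=\frac1\epsilon\langle\mathrm F,Y^m_n\rangle_{L^2(\partial B)}$. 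Hence, provided $\lambda_n(k)\neq0$ for $0\le n\le N$ (which holds for $0<k\ll1$ by the leading-order expansions of $\xi_n,\zeta_n,\eta_n,\rho_n$ recorded in Lemmas \ref{le:SK}, \ref{le:NUS} and \ref{le:NUK}, together with \eqref{E:lambda}; this is the only subtlety and should be addressed explicitly), we get $\langle\varphi_e,Y^m_n\rangle_{L^2(\partial B)}=\frac1{\epsilon\lambda_n(k)}\langle\mathrm F,Y^m_n\rangle_{L^2(\partial B)}$ for $n\le N$ and $=0$ for $n>N$.

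Next I would substitute this density back. By \eqref{Solution2} we have $\mathbf u(\boldsymbol x)=\epsilon\,\mathbf S^{k\tau}_{\partial B}[\boldsymbol\varphi_e](\boldsymbol x)+\mathbf u^{\mathrm{in}}(\boldsymbol x)$ for $\boldsymbol x\in\mathbb R^3\setminus\overline B$, so that the scattered field $\mathbf u^{\mathrm{sca}}=\mathbf u-\mathbf u^{\mathrm{in}}$ equals $\epsilon\,\mathbf S^{k\tau}_{\partial B}[\boldsymbol\varphi_e]$. Using linearity and continuity of $\mathbf S^{k\tau}_{\partial B}$ on $L^2(\partial B)^3$ and the expansion $\boldsymbol\varphi_e=\varphi_e\boldsymbol\nu=\sum_{n=0}^{N}\sum_{m=-n}^{n}\langle\varphi_e,Y^m_n\rangle_{L^2(\partial B)}Y^m_n\boldsymbol\nu$, I would pull the operator inside the finite sum to obtain
\begin{align*}
\mathbf u^{\mathrm{sca}}(\boldsymbol x)=\epsilon\sum_{n=0}^{N}\sum_{m=-n}^{n}\langle\varphi_e,Y^m_n\rangle_{L^2(\partial B)}\,\mathbf S^{k\tau}_{\partial B}[Y^m_n\boldsymbol\nu](\boldsymbol x)=\sum_{n=0}^{N}\sum_{m=-n}^{n}\frac{1}{\lambda_n(k)}\langle\mathrm F,Y^m_n\rangle_{L^2(\partial B)}\,\mathbf S^{k\tau}_{\partial B}[Y^m_n\boldsymbol\nu](\boldsymbol x),
\end{align*}
which is precisely the asserted identity once the $\frac1\epsilon$ from the coefficient cancels the $\epsilon$ in front.

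I expect the main obstacle to be not the algebra of the decoupling, which is immediate from Proposition \ref{prop:A}, but rather the justification that the ansatz $\boldsymbol\varphi_e=\varphi_e\boldsymbol\nu$ is compatible with the equation, i.e. that $\mathrm F$ itself has only a $\boldsymbol\nu$-component in the $(\mathcal I^m_n,\mathcal T^m_n,\mathcal N^m_n)$ decomposition and that $\mathcal A(k,\delta)$ preserves the subspace $\{\psi\boldsymbol\nu:\psi\in L^2(\partial B)\}$ up to the scalar action described — this is exactly where Lemmas \ref{le:NUS}–\ref{le:NUK} and the structure of $\mathrm F$ in \eqref{F:incident} (built from $\boldsymbol\nu\cdot\mathbf F_2$ and $\boldsymbol\nu\cdot\mathbf u^{\mathrm{in}}$, both scalar multiples of spherical harmonics under the radial geometry) enter. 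A secondary point to verify carefully is the nonvanishing of $\lambda_n(k)$ for $n\le N$ and $0<k\ll1$; here one uses that as $k\to0^+$, $(-\tfrac12+\zeta_n(k_1))\xi_n^{-1}(k_1)\rho_n(k\tau)$ tends to a nonzero static limit for each fixed $n$ (by the Remark following Lemma \ref{le:NUK}) and the term $\delta\tau^2k^2\eta_n(k\tau)$ is a higher-order perturbation, so $\lambda_n(k)$ stays bounded away from zero uniformly over the finite range $0\le n\le N$ for $k$ small enough.
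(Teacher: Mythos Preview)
Your proposal is correct and follows essentially the same route as the paper: invoke Proposition~\ref{prop:A} to diagonalize $\mathcal{A}(k,\delta)$ on the basis $\{Y^m_n\boldsymbol\nu\}$, read off $\langle\varphi_e,Y^m_n\rangle_{L^2(\partial B)}=\frac{1}{\epsilon\lambda_n(k)}\langle\mathrm F,Y^m_n\rangle_{L^2(\partial B)}$ for $n\le N$ and $0$ for $n>N$, and then substitute into \eqref{Solution2}. One small correction to your side discussion: for $n=0$ the quantity $(-\tfrac12+\zeta_0(k_1))\xi_0^{-1}(k_1)\rho_0(k\tau)$ does \emph{not} have a nonzero static limit (indeed $-\tfrac12+\zeta_0(k_1)=\tfrac13k_1^2+\mathcal{O}(k_1^3)$ vanishes at $k=0$); the nonvanishing of $\lambda_0(k)$ for $0<k\ll1$ comes instead from the $k^2$-coefficient $\lambda_{0,1}$ being generically nonzero, as computed later in Section~\ref{sec:4}.
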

\begin{proof}
In view of Proposition \ref{prop:A}, we deduce that
\begin{align*}
\sum_{n=0}^{+\infty}\sum_{m=-n}^{n}\lambda_n(k)\langle \varphi_e,Y^{m}_{n}\rangle_{L^{2}(\partial B)}Y^{m}_{n}=\frac{1}{\epsilon}\sum_{n=0}^{N}\sum_{m=-n}^{n}\langle\mathrm{F},Y^{m}_{n}\rangle_{L^{2}(\partial B)}Y^{m}_{n}.
\end{align*}
One arrives at
\begin{align*}
\langle \varphi_e,Y^{m}_{n}\rangle_{L^{2}(\partial B)}&=\frac{1}{\epsilon}\frac{1}{\lambda_n(k)}\langle\mathrm{F},Y^{m}_{n}\rangle_{L^{2}(\partial B)},&&\text{if } 0\leq n\leq N,|m|\leq n,\\
\langle \varphi_e,Y^{m}_{n}\rangle_{L^{2}(\partial B)}&=0,&&\text{if } n> N,|m|\leq n.
\end{align*}
Therefore,
\begin{align*}
\boldsymbol{\varphi}_e={\varphi}_e\boldsymbol\nu=\sum_{n=0}^{N}\sum_{m=-n}^{n}\frac{1}{\epsilon}\frac{1}{\lambda_n(k)}\langle\mathrm{F},Y^{m}_{n}\rangle_{L^{2}(\partial B)} Y^m_n\boldsymbol\nu.
\end{align*}
By the second expression in \eqref{Solution2}
, we get the conclusion of the lemma.

The proof is complete.
%which readily completes the proof of the proposition.
\end{proof}

The remainder of this section aims  to return to the original unscaled problem. We first need the following lemma.
\begin{lemm}\label{le:scalar}
If $\breve{f},\breve{g}$ are defined on $\partial D$, there are the  corresponding functions $f,g$ defined on $\partial B$, respectively. Then
\begin{align*}
\langle \breve{f},\breve{g}\rangle_{{L^{2}(\partial D)}}&=\epsilon^2\langle f,g\rangle_{{L^{2}(\partial B)}},\\
\|\breve{f}\|_{{L^{2}(\partial D)}}&=\epsilon\|f\|_{{L^{2}(\partial B)}}.
\end{align*}
\end{lemm}
\begin{proof}
Notice that
\begin{align*}
\langle \breve{f},\breve{g}\rangle_{L^{2}(\partial D)}=&\int_{\partial D}\breve{f}(\tilde{\boldsymbol x})\cdot\overline{\breve{g}(\tilde{\boldsymbol x})}\mathrm{d}\sigma(\tilde{\boldsymbol x})\\
=&\epsilon^2\int_{\partial B}f(\boldsymbol z+\epsilon\boldsymbol x)\cdot\overline{ g(\boldsymbol z+\epsilon\boldsymbol x)}\mathrm{d}\sigma(\boldsymbol x)=\epsilon^2\langle f,g\rangle_{L^{2}(\partial B)}.
\end{align*}
Hence $\|\breve{f}\|_{L^{2}(\partial D)}=\epsilon\|f\|_{L^{2}(\partial B)}$.

This ends the proof of the lemma.
\end{proof}

For each  spherical harmonic function $Y^m_n$ on $\partial B$, we consider the following function on $\partial D$:
\begin{align*}
\breve{Y}^m_n(\tilde{\boldsymbol x})=\frac{1}{\epsilon}Y^m_n\big(\frac{\tilde{\boldsymbol x}-\boldsymbol z}{\epsilon}\big).
\end{align*}
It is clear that $\|\breve{Y}^{m}_{n}\|_{{L^{2}(\partial D)}}=1$. Then $\breve{Y}^m_n,n\geq0,|m|\leq n$ are the normalized orthogonal functions.

The next proposition corresponds to the original unscaled problem.

\begin{prop}\label{prop:approximation}
If  $N>1$ is an integer large enough, then the model approximation of the scattered field for the system \eqref{N:system}  satisfies that for $0< k\ll1$ and $\tilde{\boldsymbol x}\in\mathbb{R}^3\backslash \overline{D}$,
\begin{align}\label{E:SCA}
\breve{\mathbf{u}}^{\mathrm{sca}}(\tilde{\boldsymbol x})
=&\frac{1}{\epsilon}\sum_{n=0}^{N}\sum_{m=-n}^{n}\frac{1}{\lambda_{n}(k)}\langle\breve{\mathrm{F}},\breve{Y}^{m}_{n}\rangle_{L^{2}(\partial D)}\mathbf{S}^{\tilde{k}\tau}_{\partial D}[\breve{Y}^{m}_{n}\boldsymbol\nu](\tilde{\boldsymbol x}),
\end{align}
where $\lambda_n(k),n\geq0$ are defined by \eqref{E:lambda}, and
\begin{align}\label{D:F}
\breve{\mathrm{F}}(\tilde{\boldsymbol x}):=\mathrm{F}(\frac{\tilde{\boldsymbol x}-\boldsymbol z}{\epsilon})
\end{align}
with $\mathrm{F}$ as seen in \eqref{F:incident}.
\end{prop}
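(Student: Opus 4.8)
The plan is to transport the scaled identity of Proposition~\ref{Prop:scatting} back to $\partial D$ by a direct change of variables, carefully tracking the powers of $\epsilon$ generated by the rescaling $\tilde{\boldsymbol x}=\boldsymbol z+\epsilon\boldsymbol x$. First I would use that, by the very definition of $\mathbf{u}^{\mathrm{sca}}$, we have $\breve{\mathbf{u}}^{\mathrm{sca}}(\tilde{\boldsymbol x})=\mathbf{u}^{\mathrm{sca}}\big((\tilde{\boldsymbol x}-\boldsymbol z)/\epsilon\big)$, so it suffices to substitute $\boldsymbol x=(\tilde{\boldsymbol x}-\boldsymbol z)/\epsilon$ into the expansion given by Proposition~\ref{Prop:scatting} (valid since $\boldsymbol\varphi_e$ has the form \eqref{varphi-e} and, $N$ being large enough, the truncation hypothesis \eqref{hy} on $\mathrm{F}$ is in force),
\begin{align*}
\mathbf{u}^{\mathrm{sca}}(\boldsymbol x)=\sum_{n=0}^{N}\sum_{m=-n}^{n}\frac{1}{\lambda_{n}(k)}\langle\mathrm{F},Y^{m}_{n}\rangle_{L^{2}(\partial B)}\mathbf{S}^{k\tau}_{\partial B}[Y^{m}_{n}\boldsymbol\nu](\boldsymbol x),
\end{align*}
and then re-express each of the three $\epsilon$-dependent ingredients (the single-layer potential, the Fourier coefficient, and the basis element) in terms of the unscaled objects on $\partial D$.

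For the single-layer potential I would apply Lemma~\ref{le:scale} with wavenumber $\tilde k\tau$, so that the scaled wavenumber is $k\tau=\tilde k\tau\,\epsilon$. Choosing the density on $\partial D$ to be $\epsilon\,\breve{Y}^m_n\boldsymbol\nu$, which by the definition $\breve{Y}^m_n(\tilde{\boldsymbol x})=\epsilon^{-1}Y^m_n((\tilde{\boldsymbol x}-\boldsymbol z)/\epsilon)$ corresponds precisely to $Y^m_n\boldsymbol\nu$ on $\partial B$, Lemma~\ref{le:scale} and linearity give $\mathbf{S}^{\tilde k\tau}_{\partial D}[\breve{Y}^m_n\boldsymbol\nu](\tilde{\boldsymbol x})=\mathbf{S}^{k\tau}_{\partial B}[Y^m_n\boldsymbol\nu](\boldsymbol x)$, the two factors of $\epsilon$ cancelling. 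For the Fourier coefficient I would invoke Lemma~\ref{le:scalar} with $\breve f=\breve{\mathrm{F}}$ (so $f=\mathrm{F}$ by \eqref{D:F}) and $\breve g=\epsilon\,\breve{Y}^m_n$ (so $g=Y^m_n$), which yields $\langle\breve{\mathrm{F}},\epsilon\breve{Y}^m_n\rangle_{L^{2}(\partial D)}=\epsilon^2\langle\mathrm{F},Y^m_n\rangle_{L^{2}(\partial B)}$, hence $\langle\mathrm{F},Y^m_n\rangle_{L^{2}(\partial B)}=\epsilon^{-1}\langle\breve{\mathrm{F}},\breve{Y}^m_n\rangle_{L^{2}(\partial D)}$. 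Inserting these two relations into the displayed expansion then produces exactly one overall factor $1/\epsilon$ together with the $\partial D$-version of every term, which is precisely the claimed identity \eqref{E:SCA}.

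No genuine obstacle arises: the argument is pure bookkeeping once Propositions~\ref{Prop:scatting} and the scaling Lemmas~\ref{le:scale} and \ref{le:scalar} are in hand. The only point requiring care is the accounting of the $\epsilon$-powers — one factor of $\epsilon$ from the single-layer rescaling in Lemma~\ref{le:scale}, and the mismatched powers of $\epsilon$ between the $L^2(\partial D)$ and $L^2(\partial B)$ pairings and between $\breve{Y}^m_n$ and $Y^m_n$ in Lemma~\ref{le:scalar} — so that precisely one $1/\epsilon$ survives in front of the sum in \eqref{E:SCA}. (If one prefers to start from $\mathrm{F}\in H^{S}(\partial B)$ rather than assume \eqref{hy} outright, the decay estimate \eqref{E:decay} shows the high modes are negligible, which legitimizes the truncation at order $N$ before applying Proposition~\ref{Prop:scatting}.)
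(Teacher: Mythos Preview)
Your proposal is correct and follows essentially the same approach as the paper: both proofs transport Proposition~\ref{Prop:scatting} back to $\partial D$ via the two rescaling identities $\mathbf{S}^{k\tau}_{\partial B}[Y^{m}_{n}\boldsymbol\nu](\boldsymbol x)=\mathbf{S}^{\tilde{k}\tau}_{\partial D}[\breve{Y}^{m}_{n}\boldsymbol\nu](\tilde{\boldsymbol x})$ and $\langle\mathrm{F},Y^{m}_{n}\rangle_{L^{2}(\partial B)}=\epsilon^{-1}\langle\breve{\mathrm{F}},\breve{Y}^{m}_{n}\rangle_{L^{2}(\partial D)}$, obtained from Lemmas~\ref{le:scale} and~\ref{le:scalar} respectively. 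Your version is simply more explicit about the $\epsilon$-bookkeeping than the paper's terse two-line proof.
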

\begin{proof}
It follows from Lemmas \ref{le:scale} and \ref{le:scalar} that
\begin{align*}
\mathbf{S}^{k\tau}_{\partial B}[Y^{m}_{n}\boldsymbol\nu](\boldsymbol x)&=\mathbf{S}^{\tilde{k}\tau}_{\partial D}[\breve{Y}^{m}_{n}\boldsymbol\nu](\tilde{\boldsymbol x}),\\
\langle\mathrm{F}, {Y}^{m}_{n}\rangle_{L^{2}(\partial B)}&=\frac{1}{\epsilon}\langle\breve{\mathrm{F}}, \breve{{Y}}^{m}_{n}\rangle_{L^{2}(\partial D)}.
\end{align*}
Combining two terms as above with Proposition \ref{Prop:scatting} yields the conclusion of the lemma.

The proof is complete.
\end{proof}

%----------------------------------------------------------------------------------------------
%-----------------------------------------------------------------------------------------------

\section{Minnaert resonances}\label{sec:4}

This section is devoted to calculating size and frequency dependent Minnaert resonances. We first show the asymptotic expansion of $\lambda_n(k)$.
\begin{lemm}\label{le:resonance}
For fixed $n\in\mathbb{N}$ and $0<k\ll1$, $\lambda_{n}(k)$ defined by \eqref{E:lambda} has the following asymptotic expansion:
\begin{align}
\lambda_n(k)=&\tilde{\lambda}_{n}+k^2\tilde{\lambda}_{n,1}
+\mathcal{O}(k^3),\label{lan:ex}
\end{align}
where
\begin{align*}
\tilde{\lambda}_0=&0,\\
\tilde{\lambda}_n=&\frac{(\lambda+\mu)n^2(8n^3+16n^2+4n-1)+\mu n(2n+1)(4n^3+12n^2+5n-4)}{(\lambda+2\mu)(2n+3)(2n+1)^2(2n-1)},\quad n\geq1,\\
\tilde{\lambda}_{0,1}=&-\frac{4\mu c(\omega)}{9(\lambda+2\mu)}-\frac{\delta\tau^2}{3(\lambda+2\mu)},\\
\tilde{\lambda}_{n,1}=&\frac{-2n^2(n+1)(4n^2+4n+33)}{(2n+5)(2n+3)(2n+1)^3(2n-1)(2n-3)}\frac{\tau^2}{\mu}\nonumber\\
&+\frac{-2\mu n(2n+1)(10n^3+15n^2-19n-12)+(\lambda+2\mu)n(2n+5)(2n-3)}{(\lambda+2\mu)(2n+5)(2n+3)(2n+1)^3(2n-1)(2n-3)}\frac{\tau^2}{\lambda+2\mu}\nonumber\\
&-\frac{c(\omega)(\lambda+\mu)n(8n^3+16n^2+4n-1)+\mu(2n+1)(4n^3+12n^2+5n-4)}{(\lambda+2\mu)(2n+3)^2(2n+1)^2(2n-1)}\nonumber\\
&-\delta\frac{2(\lambda+\mu)n(n+1)+\mu(4n^4+4n-1)}{(\lambda+2\mu)(2n+3)(2n+1)(2n-1)}\frac{\tau^2}{\mu}, \quad n\geq1.
\end{align*}
\end{lemm}
\begin{proof}
See Appendix \ref{App:A.3}.
\end{proof}
Moreover, because of  \eqref{c:omega}, \eqref{E:KKK} and \eqref{E:KK}, one  has
 \begin{align*}
k=\omega\epsilon/c_b,\quad k_1=\omega\epsilon\sqrt{c(\omega)}/c_b.
\end{align*}
Let $\lambda_{n}(\omega\epsilon c^{-1}_b)$ be given by \eqref{lan:ex}. Define a static Minnaert resonance as follows:
\begin{defi}%\label{D:resonances}
Set $n\in\{0,1,2,\cdots,N\}$ for some positive integer $N$. If $|\tilde{\lambda}_{n}|=0$, then the corresponding frequency $\omega$ is called a static Minnaert resonance.
\end{defi}

\begin{defi}%\label{D:resonances1}
Set $n\in\{0,1,2,\cdots,N\}$ for some positive integer $N$. The frequency $\omega$ is called a first-order corrected Minnaert resonance  if $|\tilde{\lambda}_n+(\omega\epsilon c^{-1}_b)^2\tilde{\lambda}_{n,1}|=0$.
\end{defi}
\begin{rema}
We can verify that $\tilde{\lambda}_{n}$ with $1\leq n\leq N$ are of size one. In fact, if $1\leq n\leq N$, then
\begin{align*}
\tilde{\lambda}_n=\frac{(\lambda+\mu)n^2(8n^3+16n^2+4n-1)+\mu n(2n+1)(4n^3+12n^2+5n-4)}{(\lambda+2\mu)(2n+3)(2n+1)^2(2n-1)}>0.
\end{align*}
Based on the above fact, we can exclude the modes $n=1,2,\cdots, N$ from the set of resonances.

Furthermore, if   a first-order corrected Minnaert resonance happens for $n=0$, then
\begin{align*}
\tilde{\lambda}_0+(\omega\epsilon c^{-1}_b)^2\tilde{\lambda}_{0,1}=0.
\end{align*}
Since $\tilde{\lambda}_0=0$, one has
\begin{align}\label{E:resnance}
\omega\epsilon c^{-1}_b=0
\end{align}
or
\begin{align}\label{E:resnance1}
-\frac{4\mu c(\omega)}{9(\lambda+2\mu)}-\frac{\delta\tau^2}{3(\lambda+2\mu)}=0.
\end{align}
\end{rema}

In what follows, denote by the lower-case character $\omega$ and the upper-case character $\Omega$ real frequencies and  complex frequencies, respectively.

In next proposition we obtain the corresponding complex frequencies if the  first-order corrected Minnaert resonances happen in three dimensions.

\begin{prop}%\label{prop:Omega}
If $c(\Omega)$ has the form as \eqref{c:omega}, then equation \eqref{E:resnance} has the solution %$\Omega_{00}$ as follows
\begin{align}
&\Omega_{00}=0,\label{solution1}
\end{align}
and equation \eqref{E:resnance1} admits the solution
\begin{align}\label{solution2}
\Omega_{0}=\Omega'_{0}+\mathrm{i}\Omega''_{0},\quad\text{with}\quad \Omega'_{0}\in\mathbb{R},\Omega''_{0}\in\mathbb{R},
\end{align}
where
\begin{align}\label{E:Omega0}
&\Omega'_{0}=0,\quad \Omega''_{0}=-\frac{4\mu\gamma}{4\mu+3\delta\tau^2}<0.
\end{align}
\end{prop}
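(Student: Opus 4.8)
The plan is to solve the two scalar equations \eqref{E:resnance} and \eqref{E:resnance1} directly, treating $\Omega$ as the complex unknown and using the explicit form $c(\Omega)=1+\mathrm{i}\gamma/\Omega$ from \eqref{c:omega}. Equation \eqref{E:resnance} reads $\Omega\epsilon c_b^{-1}=0$; since $\epsilon>0$ and $c_b=\sqrt{\kappa/\rho_b}>0$ are fixed positive constants, this forces $\Omega=0$, which is exactly \eqref{solution1}. This part is immediate and requires no real work.

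For \eqref{E:resnance1}, I would first clear denominators. Multiplying through by $-9(\lambda+2\mu)$ (which is nonzero by \eqref{mu}(ii), since $\lambda+2\mu=\tilde\mu+\sqrt{\tilde\mu}\sqrt{\tilde\lambda+2\tilde\mu}/(\cdots)$ is positive under the rescaling \eqref{E:transform}, or more directly $3\lambda+2\mu>0$ and $\mu>0$ give $\lambda+2\mu>0$) turns \eqref{E:resnance1} into $4\mu\, c(\Omega)+3\delta\tau^2=0$, i.e. $c(\Omega)=-3\delta\tau^2/(4\mu)$. Substituting $c(\Omega)=1+\mathrm{i}\gamma/\Omega$ gives $1+\mathrm{i}\gamma/\Omega=-3\delta\tau^2/(4\mu)$, hence $\mathrm{i}\gamma/\Omega=-(4\mu+3\delta\tau^2)/(4\mu)$, and therefore
\begin{align*}
\Omega=\frac{4\mu\,\mathrm{i}\gamma}{-(4\mu+3\delta\tau^2)}=-\mathrm{i}\,\frac{4\mu\gamma}{4\mu+3\delta\tau^2}.
\end{align*}
Writing $\Omega=\Omega'_0+\mathrm{i}\Omega''_0$ and reading off real and imaginary parts yields $\Omega'_0=0$ and $\Omega''_0=-4\mu\gamma/(4\mu+3\delta\tau^2)$, which is \eqref{E:Omega0} and \eqref{solution2}.

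To conclude I would verify the sign claim $\Omega''_0<0$: this follows because $\mu>0$, $\gamma>0$ (the damping coefficient is assumed positive), $\delta=\rho_b/\rho_e>0$, and $\tau>0$, so both numerator and denominator of $4\mu\gamma/(4\mu+3\delta\tau^2)$ are strictly positive, giving $\Omega''_0<0$. I should also note that $\Omega=0$ is not a solution of \eqref{E:resnance1} (since $c(\Omega)$ is singular there, or equivalently $4\mu+3\delta\tau^2\neq0$), so the division by $\Omega$ performed above is legitimate and the solution is unique. I do not anticipate any genuine obstacle here; the only point requiring a line of care is confirming $\lambda+2\mu>0$ (equivalently that the rescaled Lamé parameter in \eqref{E:transform} is positive) so that clearing the denominator in \eqref{E:resnance1} is valid, but this is a direct consequence of the standing assumption \eqref{mu}.
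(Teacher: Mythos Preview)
Your proposal is correct and matches the paper's approach exactly: the paper's proof is the single sentence ``Formulas \eqref{solution1}--\eqref{E:Omega0} follow from \eqref{E:resnance}, \eqref{E:resnance1} and \eqref{c:omega},'' and you have simply written out that direct computation in detail. The only minor remark is that your justification of $\lambda+2\mu>0$ is slightly garbled; the cleanest route is to note from \eqref{E:transform} that $\lambda+2\mu=(\tilde\lambda+2\tilde\mu)/(\sqrt{\tilde\mu}+\sqrt{\tilde\lambda+2\tilde\mu})^2$, and $\tilde\lambda+2\tilde\mu>0$ follows from \eqref{mu} via $\tilde\lambda>-\tfrac{2}{3}\tilde\mu$ so $\tilde\lambda+2\tilde\mu>\tfrac{4}{3}\tilde\mu>0$.
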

\begin{proof}
Formulas \eqref{solution1}--\eqref{E:Omega0}  follow from \eqref{E:resnance}, \eqref{E:resnance1} and \eqref{c:omega}.
\end{proof}
\begin{rema}
Thanks to the expression \eqref{E:SCA}, we have that $\tilde{\boldsymbol x}\in\mathbb{R}^3\backslash \overline{D}$,
\begin{align*}
\breve{\mathbf{u}}^{\mathrm{sca}}(\tilde{\boldsymbol x})
=&\frac{1}{\epsilon}\frac{1}{\lambda_0(k)}\langle \breve{\mathrm{F}},\breve{Y^0_0}\rangle_{L^2(\partial D)}\mathbf{S}^{\tilde{k}\tau}_{\partial D}[\breve{Y}^{0}_{0}\boldsymbol\nu](\tilde{\boldsymbol x})+\frac{1}{\epsilon}\sum_{n=1}^{N}\sum_{m=-n}^{n}\frac{1}{\lambda_{n}(k)}\langle\breve{\mathrm{F}},\breve{Y}^{m}_{n}\rangle_{L^{2}(\partial D)}\mathbf{S}^{\tilde{k}\tau}_{\partial D}[\breve{Y}^{m}_{n}\boldsymbol\nu](\tilde{\boldsymbol x}).
\end{align*}
Notice that
\begin{align*}
&\lambda^{-1}_0(k)\langle \breve{\mathrm{F}},\breve{Y^0_0}\rangle_{L^2(\partial D)}\\
=&\frac{\epsilon \langle \mathrm{F},Y^0_0\rangle_{L^2(\partial B)}}{k^2(-\frac{4\mu c(\omega)}{9(\lambda+2\mu)}-\frac{\delta\tau^2}{3(\lambda+2\mu)})+\mathcal{O}(k^3)}\\
{=}&\frac{\epsilon \langle -{\delta\tau^2} k^2\boldsymbol\nu\cdot \mathbf{u}^{\mathrm{in}}-\big(-\frac{1}{2}\mathcal{I}+\mathscr K^{k_1,*}_{\partial B}\big)(\mathscr S^{k_1}_{\partial B})^{-1}[\boldsymbol\nu\cdot\frac{\partial}{\partial\boldsymbol\nu}\mathbf{u}^{\mathrm{in}}],Y^0_0\rangle_{L^2(\partial B)}}{k^2(-\frac{4\mu c(\omega)}{9(\lambda+2\mu)}-\frac{\delta\tau^2}{3(\lambda+2\mu)})+\mathcal{O}(k^3)}\\
{=}&\frac{-\epsilon\delta\tau^2 k^2 \langle\boldsymbol\nu\cdot \mathbf{u}^{\mathrm{in}},Y^0_0\rangle_{L^2(\partial B)}-\epsilon\langle\big(-\frac{1}{2}\mathcal{I}+\mathscr K^{k_1,*}_{\partial B}\big)(\mathscr S^{k_1}_{\partial B})^{-1}[\boldsymbol\nu\cdot\frac{\partial}{\partial\boldsymbol\nu}\mathbf{u}^{\mathrm{in}}],Y^0_0\rangle_{L^2(\partial B)}}{k^2(-\frac{4\mu c(\omega)}{9(\lambda+2\mu)}-\frac{\delta\tau^2}{3(\lambda+2\mu)})+\mathcal{O}(k^3)}\\
{=}&\frac{-\epsilon\delta\tau^2 k^2 \langle\boldsymbol\nu\cdot \mathbf{u}^{\mathrm{in}},Y^0_0\rangle_{L^2(\partial B)}}{k^2(-\frac{4\mu c(\omega)}{9(\lambda+2\mu)}-\frac{\delta\tau^2}{3(\lambda+2\mu)})+\mathcal{O}(k^3)}+\frac{\epsilon k^2(\frac{1}{3}c(\omega)+\frac{17}{90}k^2(c(\omega))^2+\mathcal{O}(k^3))\langle\boldsymbol\nu\cdot\frac{\partial}{\partial\boldsymbol\nu}\mathbf{u}^{\mathrm{in}},Y^0_0\rangle_{L^2(\partial B)}}{k^2(-\frac{4\mu c(\omega)}{9(\lambda+2\mu)}-\frac{\delta\tau^2}{3(\lambda+2\mu)})+\mathcal{O}(k^3)}
\end{align*}
by means of \eqref{F:incident}.

As a result, $\Omega_{00}$ is a removable singularity and  $\Omega_{0}$ is a simple pole.
\end{rema}
\begin{defi}%\label{def:radus}
Define the resonance radius as
\begin{align*}
\mathscr{R}:=\max\left\{|\Omega'_{0}|,|\Omega''_{0}|\right\}.
\end{align*}
\end{defi}
\begin{rema}%\label{def:radus}
In this paper, the resonance radius is taken as
\begin{align*}
\mathscr{R}=\frac{4\mu\gamma}{4\mu+3\delta\tau^2}+1.
\end{align*}
\end{rema}

%----------------------------------------------------------------------------------------------
%-----------------------------------------------------------------------------------------------

\section{Time domain approximation of the scattered field}\label{sec:5}

This section aims at establishing  a resonance expansion for the low-frequency part of the acoustic-elastic wave scattering from a bubble-elastic structure  in the time domain.

\subsection{Resonance expansion of the scattered field}

Recall that  $f:\omega\rightarrow f(\omega)$ is the Fourier transform of $\hat{f}$. Let  $\rho=\mathscr{R}>0$.  We suppose that most of the energy of the excitation is concentrated in the low frequencies, that is, for $0<\gamma_1\ll1$,
\begin{align*}%\label{E:f}
\int_{\mathbb{R}\setminus{ [-\rho,\rho]}}\omega^4|f(\omega)|^2\mathrm{d}{\omega}\leq{\gamma_1}.
\end{align*}
Moreover, for $0<\gamma_2\ll1$,
\begin{align*}
\rho\epsilon c^{-1}_b\leq \gamma_2.
\end{align*}

Because of \eqref{Fundamental2}, \eqref{c:omega} and \eqref{E:incident}, the incident field in time domain is given by
\begin{align*}
\hat{\breve{\mathbf{u}}}^{\mathrm{in}}(\tilde{\boldsymbol x},t)=&\int_{\mathbb{R}}-\omega^2f(\omega)\boldsymbol\Gamma^{\tilde{k}\tau}(\tilde{\boldsymbol x},\mathbf{s})\mathbf{p}e^{-\mathrm{i}\omega t}\mathrm{d}\omega\\
=&\int_{\mathbb{R}}\omega^2 f(\omega)\frac{e^{-\mathrm{i}\omega (t-\frac{\tau}{c_bc_s}|\tilde{\boldsymbol x}-\mathbf{s}|)}}{4\pi\mu|\tilde{\boldsymbol x}-\mathbf{s}|}\mathrm{d}\omega\mathbf{p}
+\nabla\nabla\big(\int_{\mathbb{R}}f(\omega)\frac{e^{-{\mathrm{i}\omega(t-\frac{\tau}{c_bc_s}|\tilde{\boldsymbol x}-\mathbf{s}|)}}}{4\pi c^{-2}_b\tau^2|\tilde{\boldsymbol x}-\mathbf{s}|}\mathrm{d}\omega\big)\mathbf{p}\\
&-\nabla\nabla\big(\int_{\mathbb{R}}f(\omega)\frac{e^{-\mathrm{i}\omega(t-\frac{\tau}{c_bc_p}|\tilde{\boldsymbol x}-\mathbf{s}|)}}{4\pi c^{-2}_b\tau^2|\tilde{\boldsymbol x}-\mathbf{s}|}\mathrm{d}\omega\big)\mathbf{p}\\
=&-\frac{\hat{f}''(t-\frac{\tau}{c_bc_s}|\tilde{\boldsymbol x}-\mathbf{s}|)}{4\pi\mu|\tilde{\boldsymbol x}-\mathbf{s}|}\mathbf{p}+\nabla\nabla\frac{\hat{f}(t-\frac{\tau}{c_bc_s}|\tilde{\boldsymbol x}-\mathbf{s}|)}{4\pi c^{-2}_b\tau^2|\tilde{\boldsymbol x}-\mathbf{s}|}\mathbf{p}-\nabla\nabla\frac{\hat{f}(t-\frac{\tau}{c_bc_p}|\tilde{\boldsymbol x}-\mathbf{s}|)}{4\pi c^{-2}_b\tau^2|\tilde{\boldsymbol x}-\mathbf{s}|}\mathbf{p},
\end{align*}
where $\boldsymbol\Gamma^{\tilde{k}\tau}=(\Gamma^{\tilde{k}\tau}_{ij})^{3}_{i,j=1}$ is given by \eqref{Fundamental2}.
Due to \eqref{E:speed}, we deduce
\begin{align}\label{relation}
c_p=\sqrt{(\lambda+\mu)+\mu}=\sqrt{\frac{1}{3}\cdot3(\lambda+\mu)+\mu}\stackrel{\eqref{mu}}{\geq}\sqrt{\frac{1}{3}\mu+\mu}\geq c_s.
\end{align}
Recall $\hat{f}:t\mapsto \hat{f}(t)\in C^{\infty}_0([0,C_1])$. Observe that $\hat{f}$ and $\hat{f}''$ vanish for negative arguments. From the position of physics,  the direct signal has not reached the observation point yet if $t<\frac{\tau}{c_bc_p}|\tilde{\boldsymbol x}-\mathbf{s}|$.

In addition, we define
$\boldsymbol{\mathcal{D}}^{\tilde{k}\tau}(\boldsymbol z,\mathbf{s})=(\mathcal{D}^{\tilde{k}\tau}_{ij})^3_{i,j=1}$ as a $3\times3$ matrix with entries:
\begin{align*}
\mathcal{D}^{\tilde{k}\tau}_{ij}=\partial_{j}\Gamma^{\tilde{k}\tau}_{i1}(\boldsymbol z,\mathbf{s})p_1+\partial_{j}\Gamma^{\tilde{k}\tau}_{i2}(\boldsymbol z,\mathbf{s})p_2+\partial_{j}\Gamma^{\tilde{k}\tau}_{i3}(\boldsymbol z,\mathbf{s})p_3,
\end{align*}
where  $p_j$ is the $j$-th component of $\mathbf{p}$.

In the next lemma we establish  the following asymptotic expansion for the incident field  $\mathrm{F}$ given by \eqref{F:incident}.
\begin{lemm}%\label{le:Ftaylor1}
One has that for $0<\epsilon\leq\gamma_2/\rho$,
\begin{align*}%\label{E:Taylor2}
\mathrm{F}=&\delta\tau^2k^2\omega^2f(\omega)\boldsymbol\nu\cdot(\boldsymbol{\Gamma}^{\tilde{k}\tau}(\boldsymbol z,\mathbf{s})\mathbf{p})+\epsilon\delta\tau^2k^2\omega^2f(\omega)\boldsymbol\nu\cdot(\sum_{j=1}^3x_j \partial_{j}\boldsymbol\Gamma^{\tilde{k}\tau}(\boldsymbol z,\mathbf{s})\mathbf{p})\\
&+\epsilon\lambda\omega^2f(\omega)\big(-\frac{1}{2}\mathcal{I}+\mathscr{K}^{k_1,*}_{\partial B}\big)(\mathscr{S}^{k_1}_{\partial B})^{-1}[\sum^3_{i=1}\sum^3_{j=1}\partial_{{i}}\Gamma^{\tilde{k}\tau}_{ij}(\boldsymbol z,\mathbf{s})p_{j}]\\
&+\epsilon\mu\omega^2f(\omega)\big(-\frac{1}{2}\mathcal{I}+\mathscr{K}^{k_1,*}_{\partial B}\big)(\mathscr{S}^{k_1}_{\partial B})^{-1}[\boldsymbol\nu\cdot\big(\big( \boldsymbol{\mathcal{D}}^{\tilde{k}\tau}(\boldsymbol z,\mathbf{s})+\boldsymbol{\mathcal{D}}^{\tilde{k}\tau}(\boldsymbol z,\mathbf{s})^\top\big)\boldsymbol\nu\big)]+\mathcal{O}(\epsilon^2).
\end{align*}
\end{lemm}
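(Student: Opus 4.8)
The plan is to insert the explicit point source into the definition \eqref{F:incident} of $\mathrm{F}$ and then Taylor expand in the small parameter $\epsilon$. Recall that $\mathrm{F}=-\delta\tau^2\epsilon^2\mathrm{F}_1+\big(-\tfrac12\mathcal{I}+\mathscr{K}^{k_1,*}_{\partial B}\big)(\mathscr{S}^{k_1}_{\partial B})^{-1}[\boldsymbol\nu\cdot\mathbf{F}_2]$, with $\mathrm{F}_1=\tilde{k}^2\boldsymbol\nu\cdot\mathbf{u}^{\mathrm{in}}$ and $\mathbf{F}_2=-\tfrac{\partial}{\partial\boldsymbol\nu}\mathbf{u}^{\mathrm{in}}$, and that by \eqref{E:incident} together with $\mathbf{u}^{\mathrm{in}}(\boldsymbol x)=\breve{\mathbf{u}}^{\mathrm{in}}(\boldsymbol z+\epsilon\boldsymbol x)$ one has $\mathbf{u}^{\mathrm{in}}(\boldsymbol x)=-\omega^2 f(\omega)\boldsymbol\Gamma^{\tilde{k}\tau}(\boldsymbol z+\epsilon\boldsymbol x,\mathbf{s})\mathbf{p}$. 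Since the source $\mathbf{s}$ lies at a fixed positive distance from the bubble, the map $\tilde{\boldsymbol y}\mapsto\boldsymbol\Gamma^{\tilde{k}\tau}(\tilde{\boldsymbol y},\mathbf{s})$ is $C^{\infty}$ on a fixed neighbourhood of $\boldsymbol z$, with derivatives bounded uniformly in $\omega$ over the low-frequency band, so for $0<\epsilon\le\gamma_2/\rho$ and $\boldsymbol x\in\partial B$,
\begin{align*}
\boldsymbol\Gamma^{\tilde{k}\tau}(\boldsymbol z+\epsilon\boldsymbol x,\mathbf{s})\mathbf{p}&=\boldsymbol\Gamma^{\tilde{k}\tau}(\boldsymbol z,\mathbf{s})\mathbf{p}+\epsilon\sum_{j=1}^{3}x_j\,\partial_j\boldsymbol\Gamma^{\tilde{k}\tau}(\boldsymbol z,\mathbf{s})\mathbf{p}+\mathcal{O}(\epsilon^2),\\
(\nabla\breve{\mathbf{u}}^{\mathrm{in}})(\boldsymbol z+\epsilon\boldsymbol x)&=-\omega^2 f(\omega)\boldsymbol{\mathcal{D}}^{\tilde{k}\tau}(\boldsymbol z,\mathbf{s})+\mathcal{O}(\epsilon),
\end{align*}
the remainders being governed by the second derivatives of $\boldsymbol\Gamma^{\tilde{k}\tau}(\cdot,\mathbf{s})$ near $\boldsymbol z$. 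I also use $k=\tilde k\epsilon$ from \eqref{E:KKK}, so that $\epsilon^2\tilde k^2=k^2$.

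For the first summand of $\mathrm{F}$ I would write $-\delta\tau^2\epsilon^2\mathrm{F}_1=-\delta\tau^2 k^2\,\boldsymbol\nu\cdot\mathbf{u}^{\mathrm{in}}=\delta\tau^2 k^2\omega^2 f(\omega)\,\boldsymbol\nu\cdot\big(\boldsymbol\Gamma^{\tilde{k}\tau}(\boldsymbol z+\epsilon\boldsymbol x,\mathbf{s})\mathbf{p}\big)$ and substitute the first expansion above; this produces exactly the first two terms of the asserted expansion, up to an error $k^2\mathcal{O}(\epsilon^2)=\mathcal{O}(\epsilon^2)$.

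For the second summand, the chain rule applied to $\mathbf{u}^{\mathrm{in}}(\boldsymbol x)=\breve{\mathbf{u}}^{\mathrm{in}}(\boldsymbol z+\epsilon\boldsymbol x)$ brings out one extra power of $\epsilon$, and since $\partial B$ and $\partial D$ share the same outward normal and the conormal derivative in the rescaled system uses the rescaled constants $\lambda,\mu$ of \eqref{E:transform}, one has $\mathbf{F}_2(\boldsymbol x)=-\epsilon\big(\lambda(\nabla\cdot\breve{\mathbf{u}}^{\mathrm{in}})(\boldsymbol z+\epsilon\boldsymbol x)\boldsymbol\nu+2\mu(\nabla^{s}\breve{\mathbf{u}}^{\mathrm{in}})(\boldsymbol z+\epsilon\boldsymbol x)\boldsymbol\nu\big)$. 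Evaluating at $\boldsymbol z$ and using $(\nabla\breve{\mathbf{u}}^{\mathrm{in}})(\boldsymbol z)=-\omega^2 f(\omega)\boldsymbol{\mathcal{D}}^{\tilde{k}\tau}(\boldsymbol z,\mathbf{s})$, hence $(\nabla\cdot\breve{\mathbf{u}}^{\mathrm{in}})(\boldsymbol z)=-\omega^2 f(\omega)\sum_{i,j}\partial_i\Gamma^{\tilde{k}\tau}_{ij}(\boldsymbol z,\mathbf{s})p_j$ and $(\nabla^{s}\breve{\mathbf{u}}^{\mathrm{in}})(\boldsymbol z)=-\tfrac12\omega^2 f(\omega)\big(\boldsymbol{\mathcal{D}}^{\tilde{k}\tau}(\boldsymbol z,\mathbf{s})+\boldsymbol{\mathcal{D}}^{\tilde{k}\tau}(\boldsymbol z,\mathbf{s})^{\top}\big)$, one obtains
\begin{align*}
\boldsymbol\nu\cdot\mathbf{F}_2(\boldsymbol x)=&\epsilon\lambda\omega^2 f(\omega)\sum_{i=1}^{3}\sum_{j=1}^{3}\partial_i\Gamma^{\tilde{k}\tau}_{ij}(\boldsymbol z,\mathbf{s})p_j\\
&+\epsilon\mu\omega^2 f(\omega)\,\boldsymbol\nu\cdot\big((\boldsymbol{\mathcal{D}}^{\tilde{k}\tau}(\boldsymbol z,\mathbf{s})+\boldsymbol{\mathcal{D}}^{\tilde{k}\tau}(\boldsymbol z,\mathbf{s})^{\top})\boldsymbol\nu\big)+\mathcal{O}(\epsilon^2).
\end{align*}
Applying $\big(-\tfrac12\mathcal{I}+\mathscr{K}^{k_1,*}_{\partial B}\big)(\mathscr{S}^{k_1}_{\partial B})^{-1}$ to this identity yields the third and fourth terms, and adding the two contributions gives the stated expansion.

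The substantive part is the uniform control of the $\mathcal{O}(\epsilon^2)$ remainders. One must check they are uniform in $\boldsymbol x\in\partial B$ and in $\omega$ over the low-frequency band, which reduces to uniform bounds on the second derivatives of $\boldsymbol\Gamma^{\tilde{k}\tau}(\cdot,\mathbf{s})$ on a fixed neighbourhood of $\boldsymbol z$ bounded away from $\mathbf{s}$ (available since $\boldsymbol z\ne\mathbf{s}$ and $\tilde k$ ranges over a bounded set), and one must verify that $\big(-\tfrac12\mathcal{I}+\mathscr{K}^{k_1,*}_{\partial B}\big)(\mathscr{S}^{k_1}_{\partial B})^{-1}$ transports these remainders: by the spectral formulas of Lemma~\ref{le:SK} and \eqref{E:estimate} this operator is bounded from $H^{s+1}(\partial B)$ to $H^{s}(\partial B)$ uniformly as $k_1=k\sqrt{c(\omega)}\to0$ (with $(\mathscr{S}^{k_1}_{\partial B})^{-1}$ well defined by Lemma~\ref{le:inverse}), while $\boldsymbol\nu\cdot\mathbf{F}_2$ and its truncation error are smooth on $\partial B$ with all Sobolev norms of order $\epsilon$ and $\epsilon^2$ respectively, so the error persists in every Sobolev norm. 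A final routine point is to keep track correctly of the powers of $\epsilon$ generated by the rescaling $\tilde{\boldsymbol x}=\boldsymbol z+\epsilon\boldsymbol x$ inside the conormal derivative, and of the fact that $(\mathscr{S}^{k_1}_{\partial B})^{-1}$ and $\mathscr{K}^{k_1,*}_{\partial B}$ act on the scalar coordinate functions appearing above.
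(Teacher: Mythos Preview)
Your proof is correct and follows essentially the same route as the paper: insert the point-source form of $\mathbf{u}^{\mathrm{in}}$ into the definition \eqref{F:incident}, Taylor expand $\boldsymbol\Gamma^{\tilde{k}\tau}(\boldsymbol z+\epsilon\boldsymbol x,\mathbf{s})\mathbf{p}$ and its conormal derivative about $\boldsymbol z$, and read off the four terms. The paper's proof is terser and does not discuss the uniform control of the $\mathcal{O}(\epsilon^2)$ remainders or the boundedness of $\big(-\tfrac12\mathcal{I}+\mathscr{K}^{k_1,*}_{\partial B}\big)(\mathscr{S}^{k_1}_{\partial B})^{-1}$ that you (rightly) flag, but the substance is the same.
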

\begin{proof}
Because of  \eqref{E:incident}, \eqref{E:KKK} and \eqref{F:incident}, we have
\begin{align*}
\mathrm{F}=\omega^2f(\omega)\delta\tau^2k^2\boldsymbol\nu\cdot(\boldsymbol{\Gamma}^{\tilde{k}\tau}(\boldsymbol z+\epsilon \boldsymbol x,\mathbf{s})\mathbf{p})+\omega^2f(\omega)\big(-\frac{1}{2}\mathcal{I}+\mathscr{K}^{k_1,*}_{\partial B}\big)(\mathscr{S}^{k_1}_{\partial B})^{-1}\boldsymbol\nu\cdot(\frac{\partial}{\partial\boldsymbol \nu}\boldsymbol{\Gamma}^{\tilde{k}\tau}(\boldsymbol z+\epsilon \boldsymbol x,\mathbf{s})\mathbf{p}).
\end{align*}
It follows from Taylor's expansion that
\begin{align*}
\boldsymbol\Gamma^{\tilde{k}\tau}(\boldsymbol z+\epsilon \boldsymbol x,\mathbf{s})\mathbf{p}\mid_{\boldsymbol x\in\partial B}&=\boldsymbol\Gamma^{\tilde{k}\tau}(\boldsymbol z,\mathbf{s})\mathbf{p}+\epsilon \sum_{j=1}^3x_j \partial_{j}\boldsymbol\Gamma^{\tilde{k}\tau}(\boldsymbol z,\mathbf{s})\mathbf{p}+\mathcal{O}(\epsilon^2),\\
\frac{\partial}{\partial\boldsymbol\nu_{\boldsymbol x}}\boldsymbol\Gamma^{\tilde{k}\tau}(\boldsymbol z+\epsilon \boldsymbol x,\mathbf{s})\mathbf{p}
&=\epsilon\lambda\sum^3_{i=1}\sum^3_{j=1}\partial_{{i}}\Gamma^{\tilde{k}\tau}_{ij}(\boldsymbol z,\mathbf{s})p_{j}\boldsymbol\nu_{\boldsymbol x}+\epsilon\mu\big( \boldsymbol{\mathcal{D}}^{\tilde{k}\tau}(\boldsymbol z,\mathbf{s})+\boldsymbol{\mathcal{D}}^{\tilde{k}\tau}(\boldsymbol z,\mathbf{s})^\top\big)\boldsymbol\nu_{\boldsymbol x}+\mathcal{O}(\epsilon^2).
\end{align*}
Hence we get the conclusion of the lemma.

This ends the proof.
\end{proof}

Moreover, if $g\in L^2(\partial B)$, then for $0\leq n\leq N,|m|\leq n$,
\begin{align*}
\langle \big(-\frac{1}{2}\mathcal{I}+\mathscr K^{k_1,*}_{\partial B}\big)(\mathscr S^{k_1}_{\partial B})^{-1}[g],Y^m_n\rangle_{L^2(\partial B)}Y^m_n&=(-\frac{1}{2}+\zeta_{n}(k_1))\xi^{-1}_n(k_1)\langle g,Y^m_n\rangle_{L^2(\partial B)}Y^m_n\\
&=(n-\frac{k^2c(\omega)}{2n+3}+\mathcal{O}(k^3))\langle g,Y^m_n\rangle_{L^2(\partial B)}Y^m_n.
\end{align*}
Under the assumption \eqref{hy} on $\mathrm{F}$, we consider that for $0<\epsilon\leq\gamma_2/\rho$,
\begin{align*}
\mathrm{F}
=&\delta\tau^2k^2\omega^2f(\omega)\sum_{n=0}^{N}\sum_{m=-n}^{n}\langle\boldsymbol\nu\cdot(\boldsymbol\Gamma^{\tilde{k}\tau}(\boldsymbol z,\mathbf{s})\mathbf{p}),Y^m_n\rangle_{L^2(\partial B)}Y^m_n\\
&+\epsilon\delta\tau^2k^2\omega^2f(\omega)\sum_{n=0}^{N}\sum_{m=-n}^{n}\langle\boldsymbol\nu\cdot(\sum^{3}_{j=1} x_j \partial_{j}\boldsymbol\Gamma^{\tilde{k}\tau}(\boldsymbol z,\mathbf{s})\mathbf{p}),Y^m_n\rangle_{L^2(\partial B)}Y^m_n\\
&+\epsilon \lambda\omega^2 f(\omega)\sum_{n=0}^{N}\sum_{m=-n}^{n}(n-\frac{k^2c(\omega)}{2n+3})\langle\sum^3_{i=1}\sum^3_{j=1}\partial_{{i}}\Gamma^{\tilde{k}\tau}_{ij}(\boldsymbol z,\mathbf{s})p_{j},Y^m_n\rangle_{L^2(\partial B)}Y^m_n\\
&+\epsilon \mu \omega^2f(\omega)\sum_{n=0}^{N}\sum_{m=-n}^{n}(n-\frac{k^2c(\omega)}{2n+3})\langle\boldsymbol\nu\cdot\big(\big( \boldsymbol{\mathcal{D}}^{\tilde{k}\tau}(\boldsymbol z,\mathbf{s})+\boldsymbol{\mathcal{D}}^{\tilde{k}\tau}(\boldsymbol z,\mathbf{s})^\top\big)\boldsymbol\nu\big),Y^m_n\rangle_{L^2(\partial B)}Y^m_n,
\end{align*}
namely,
\begin{align}\label{F:truncation}
\breve{\mathrm{F}}=&\delta\tau^2k^2\omega^2f(\omega)\sum_{n=0}^{N}\sum_{m=-n}^{n}\langle\boldsymbol\nu\cdot(\boldsymbol\Gamma^{\tilde{k}\tau}(\boldsymbol z,\mathbf{s})\mathbf{p}),\breve{Y}^m_n\rangle_{L^2(\partial D)}\breve{Y}^m_n\nonumber\\
&+\epsilon\delta\tau^2k^2\omega^2f(\omega)\sum_{n=0}^{N}\sum_{m=-n}^{n}\langle\boldsymbol\nu\cdot( \sum^3_{j=1}\frac{\tilde{x}_j-z_j}{\epsilon}\partial_{j}\boldsymbol\Gamma^{\tilde{k}\tau}(\boldsymbol z,\mathbf{s})\mathbf{p}),\breve{Y}^m_n\rangle_{L^2(\partial D)}\breve{Y}^m_n\nonumber\\
&+\epsilon \lambda \omega^2f(\omega)\sum_{n=0}^{N}\sum_{m=-n}^{n}(n-\frac{k^2c(\omega)}{2n+3})\langle\sum^3_{i=1}\sum^3_{j=1}\partial_{{i}}\Gamma^{\tilde{k}\tau}_{ij}(\boldsymbol z,\mathbf{s})p_{j},\breve{Y}^m_n\rangle_{L^2(\partial D)}\breve{Y}^m_n\nonumber\\
&+\epsilon \mu\omega^2 f(\omega)\sum_{n=0}^{N}\sum_{m=-n}^{n}(n-\frac{k^2c(\omega)}{2n+3})\langle\boldsymbol\nu\cdot\big(\big( \boldsymbol{\mathcal{D}}^{\tilde{k}\tau}(\boldsymbol z,\mathbf{s})+\boldsymbol{\mathcal{D}}^{\tilde{k}\tau}(\boldsymbol z,\mathbf{s})^\top\big)\boldsymbol\nu\big),\breve{Y}^m_n\rangle_{L^2(\partial D)}\breve{Y}^m_n,
\end{align}
where $\breve{\mathrm{F}}$ is as seen in \eqref{D:F}.

Recall that $\boldsymbol z$ is the center of the resonator, $\epsilon $ is its radius and $\mathbf{s}$ is the source location. Let us define
\begin{align*}
t^{-}_0:=&\frac{c^{-1}_b\tau|\boldsymbol z-\mathbf{s}|}{c_p}+\frac{c^{-1}_b\tau|\tilde{\boldsymbol x}-\boldsymbol z|}{c_p}-\frac{c^{-1}_b\tau\epsilon}{c_s}
-C_1,\\
t^{+}_0:=&\frac{c^{-1}_b\tau|\boldsymbol z-\mathbf{s}|}{c_s}+\frac{c^{-1}_b\tau|\tilde{\boldsymbol x}-\boldsymbol z|}{c_s}+\frac{c^{-1}_b\tau\epsilon}{c_s}
+C_1,
\end{align*}
where $\frac{c^{-1}_b\tau|\boldsymbol z-\mathbf{s}|+c^{-1}_b\tau|\tilde{\boldsymbol x}-\boldsymbol z|}{c_i},i=s,p$ signify the time it takes the wide-band signal to reach first the scatterer and then to the observation point $\tilde{\boldsymbol x}$ and $-\frac{c^{-1}_b\tau\epsilon}{c_s},\frac{c^{-1}_b\tau\epsilon}{c_s}$ stand for the maximal timespan spent inside the bubble. In addition, the truncated inverse Fourier transform of the scattered field $\breve{\mathbf{u}}^{\mathrm{sca}}$ is defined  by
\begin{align*}
\mathbf{P}_{\rho}[\breve{\mathbf{u}}^{\mathrm{sca}}](\tilde{\boldsymbol x},t):=\int^{\rho}_{-\rho}\breve{\mathbf{u}}^{\mathrm{sca}}(\tilde{\boldsymbol x},\omega)e^{-\mathrm{i}\omega t}\mathrm{d}\omega.
\end{align*}

The following theorem corresponds to the expression of the truncated scattered field in the time domain.

\begin{theo}\label{th:scattered}
Suppose that the incident wave $\breve{\mathrm{F}}$ has the form as \eqref{F:truncation}. Let $\Omega''_{0}$ be as seen in \eqref{E:Omega0}. For $0<\epsilon\leq\gamma_2/\rho$, there exists an integer $M\geq1$ such that if we set
\begin{align*}
\mathbf{e}_{0}(\tilde{\boldsymbol x}):=\mathbf{S}^{\mathrm{i}\Omega''_0 c^{-1}_b\tau}_{\partial D}[\breve{Y}^{0}_{0}\boldsymbol\nu](\tilde{\boldsymbol x}),
\end{align*}
then
the truncated scattered field has the following form in the time domain:

{\rm(i)} One has that for $\tilde{\boldsymbol x}\in\mathbb{R}^3\backslash \overline{D},t\leq t^{-}_0$,
\begin{align}\label{E:scattered}
\mathbf{P}_{\rho}[\breve{\mathbf{u}}^{\mathrm{sca}}](\tilde{\boldsymbol x},t)=
&\mathcal{O}\left(\epsilon ^3\rho^{-M}\right).
\end{align}

{\rm(ii)} One has that for $\tilde{\boldsymbol x}\in\mathbb{R}^3\backslash \overline{D},t\geq t^{+}_0$,
\begin{align}\label{EE:scattered}
\mathbf{P}_{\rho}[\breve{\mathbf{u}}^{\mathrm{sca}}](\tilde{\boldsymbol x},t)
=&\frac{1152\pi(\mu\gamma)^3(\lambda+2\mu)}{\epsilon(4\mu+3\delta\tau^2)^4}\Big(\delta\tau^2f(\mathrm{i}\Omega''_0)\langle\boldsymbol\nu\cdot(\boldsymbol\Gamma^{\mathrm{i}\Omega''_0 c^{-1}_b\tau}(\boldsymbol z,\mathbf{s})\mathbf{p}),\breve{Y}^0_0\rangle_{L^2(\partial D)}\nonumber\\
&+\epsilon\delta\tau^2f(\mathrm{i}\Omega''_0)\langle\boldsymbol\nu\cdot( \sum^3_{j=1}\frac{\tilde{x}_j-z_j}{\epsilon} \partial_{j}\boldsymbol\Gamma^{\mathrm{i}\Omega''_0 c^{-1}_b\tau}(\boldsymbol z,\mathbf{s})\mathbf{p}),\breve{Y}^0_0\rangle_{L^2(\partial D)}\nonumber\\
&-\frac{1}{3}\epsilon \lambda  f(\mathrm{i}\Omega''_0)c(\mathrm{i}\Omega''_0)\langle\sum^3_{i=1}\sum^3_{j=1}\partial_{{i}}\Gamma^{\mathrm{i}\Omega''_0 c^{-1}_b\tau}_{ij}(\boldsymbol z,\mathbf{s})p_{j},\breve{Y}^0_0\rangle_{L^2(\partial D)}\nonumber\\
&-\frac{1}{3}\epsilon \mu   f(\mathrm{i}\Omega''_0)c(\mathrm{i}\Omega''_0)\langle\boldsymbol\nu\cdot\big(\big( \boldsymbol{\mathcal{D}}^{\mathrm{i}\Omega''_0 c^{-1}_b\tau}(\boldsymbol z,\mathbf{s})+\boldsymbol{\mathcal{D}}^{\mathrm{i}\Omega''_0 c^{-1}_b\tau}(\boldsymbol z,\mathbf{s})^\top\big)\boldsymbol\nu\big),\breve{Y}^0_0\rangle_{L^2(\partial D)}\Big)\nonumber\\
&\times\mathbf{e}_{0}(\tilde{\boldsymbol x})e^{\Omega''_{0} t}+\mathcal{O}\left({\epsilon^3\rho^{-M}}/{t}\right).
\end{align}
\end{theo}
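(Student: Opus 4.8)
The plan is to regard $\omega\mapsto\breve{\mathbf{u}}^{\mathrm{sca}}(\tilde{\boldsymbol x},\omega)$ as a meromorphic function of the frequency and to evaluate $\mathbf{P}_{\rho}[\breve{\mathbf{u}}^{\mathrm{sca}}]$ by deforming the contour $[-\rho,\rho]$ into the complex $\omega$-plane and collecting residues, in the spirit of \cite{Ammari2022Asympototic,Baldassari2021Modal,CGL}. For $\breve{\mathrm{F}}$ of the form \eqref{F:truncation}, $\breve{\mathbf{u}}^{\mathrm{sca}}(\tilde{\boldsymbol x},\cdot)$ is the finite sum \eqref{E:SCA}, and every factor there extends holomorphically in $\omega$ on a neighbourhood of $\overline{B(0,\rho)}$, $\rho=\mathscr{R}$: the Kupradze matrix $\boldsymbol\Gamma^{\tilde k\tau}$ and the operators $\mathbf{S}^{\tilde k\tau}_{\partial D},\mathscr{S}^{k_1}_{\partial B},\mathscr{K}^{k_1,*}_{\partial B}$ (hence $\breve{\mathrm{F}}$) are entire in the wavenumber through $\mathscr{G}^{\tilde k}$ and $j_n,h_n$, while $f$ is entire and, by the Paley--Wiener theorem applied to $\hat f\in C^\infty_0([0,C_1])$, satisfies $|f(\omega)|\le C_m e^{C_1|\operatorname{Im}\omega|}(1+|\omega|)^{-m}$ for every $m$. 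The only possible poles are the zeros of $\omega\mapsto\lambda_n(k(\omega))$ with $k=\omega\epsilon/c_b$; by the Remark containing \eqref{E:resnance1} the constants $\lambda_n>0$, $1\le n\le N$, are of size one, so for $\epsilon$ small $\lambda_n(k(\omega))$ stays bounded away from zero on $\overline{B(0,\rho)}$, whereas by \eqref{lan:ex} one has $\lambda_0(k(\omega))=\tfrac{\omega^2\epsilon^2}{c_b^2}g_\epsilon(\omega)$ with $g_\epsilon(\omega)=-\tfrac{4\mu c(\omega)}{9(\lambda+2\mu)}-\tfrac{\delta\tau^2}{3(\lambda+2\mu)}+\mathcal{O}(\epsilon)$. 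A Rouch\'e argument against the explicitly computable leading part shows that $\omega^2g_\epsilon(\omega)$ has in $B(0,\rho)$ exactly two simple zeros: $\omega=0$, where the numerator of \eqref{E:SCA} vanishes to higher order (the removable singularity $\Omega_{00}$), and $\Omega^\epsilon_0=\Omega_0+\mathcal{O}(\epsilon)$ near the first-order corrected resonance $\Omega_0=\mathrm{i}\Omega''_0$ from \eqref{E:resnance1}, which lies in the open lower half-plane since $\Omega''_0<0$ and $|\Omega''_0|<\mathscr{R}=\rho$ by \eqref{E:Omega0}. Thus $\breve{\mathbf{u}}^{\mathrm{sca}}(\tilde{\boldsymbol x},\cdot)$ has in $B(0,\rho)$ a single, simple pole, at $\Omega^\epsilon_0$, carried entirely by the $n=m=0$ summand.

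Next I would handle the two time regimes. Collecting the exponential factors in \eqref{E:SCA}--\eqref{F:truncation}, the $n=0$ part of $\breve{\mathbf{u}}^{\mathrm{sca}}(\tilde{\boldsymbol x},\omega)e^{-\mathrm{i}\omega t}$ carries a phase $e^{\mathrm{i}\omega(T-t)}$ in which the ``travel time'' $T$ — coming from $\boldsymbol\Gamma^{\tilde k\tau}(\boldsymbol z,\mathbf{s})$, from the layer kernel $\boldsymbol\Gamma^{\tilde k\tau}(\tilde{\boldsymbol x},\tilde{\boldsymbol y})$ with $\tilde{\boldsymbol y}\in\partial D$, and from the support of $\hat f$ — ranges over an interval with endpoints $t^{-}_0+C_1$ and $t^{+}_0$, using $c_p\ge c_s$ from \eqref{relation}. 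For $t\le t^{-}_0$ one has $t<T$ throughout, hence $|e^{\mathrm{i}\omega(T-t)}|\le1$ on the upper semicircle $C^+_\rho$; Cauchy's theorem moves $\int_{-\rho}^{\rho}$ onto $C^+_\rho$, which encloses no pole, and a Jordan-type estimate on $C^+_\rho$ — using $|f(\omega)|\lesssim\rho^{-m}$ and the lower bound $|\lambda_0(k(\omega))|\ge c\epsilon^2$ on $|\omega|=\rho$ (where $\operatorname{dist}(\omega,\Omega^\epsilon_0)\ge1$), together with the bookkeeping of $\epsilon$-powers in \eqref{E:SCA}--\eqref{F:truncation} — yields \eqref{E:scattered}. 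For $t\ge t^{+}_0$ one has $t>T$ throughout, hence $|e^{\mathrm{i}\omega(T-t)}|\le1$ on the lower semicircle $C^-_\rho$; deforming $\int_{-\rho}^{\rho}$ onto $C^-_\rho$ now picks up $-2\pi\mathrm{i}$ times the residue of the integrand at $\Omega^\epsilon_0$, and the remaining arc integral contributes $\mathcal{O}(\epsilon^3\rho^{-M}/t)$, the factor $1/t$ coming from Jordan's lemma on $C^-_\rho$.

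It then remains to evaluate the residue, which is the source of the explicit constant in \eqref{EE:scattered}. From $\lambda_0(k(\omega))=\tfrac{\omega^2\epsilon^2}{c_b^2}g_\epsilon(\omega)$ and $g_\epsilon(\Omega^\epsilon_0)=0$ one gets $\operatorname{Res}_{\omega=\Omega^\epsilon_0}\lambda_0(k(\omega))^{-1}=\tfrac{c_b^2}{(\Omega^\epsilon_0)^2\epsilon^2\,g'_\epsilon(\Omega^\epsilon_0)}$, and to leading order $g'_\epsilon(\Omega_0)=-\tfrac{4\mu}{9(\lambda+2\mu)}c'(\Omega_0)+\mathcal{O}(\epsilon)$ with $c'(\Omega_0)=\mathrm{i}\gamma(\Omega''_0)^{-2}$ since $\Omega_0=\mathrm{i}\Omega''_0$, so $(\Omega_0)^2g'_\epsilon(\Omega_0)=\tfrac{4\mu\mathrm{i}\gamma}{9(\lambda+2\mu)}+\mathcal{O}(\epsilon)$. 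Multiplying $\tfrac1\epsilon$ times this residue by the common factor $k^2\omega^2f(\omega)\big|_{\Omega_0}=\tfrac{(\Omega''_0)^4\epsilon^2}{c_b^2}f(\mathrm{i}\Omega''_0)$ of \eqref{F:truncation}, by the $n=m=0$ Fourier coefficients of its four pieces — where $n-\tfrac{k^2c(\omega)}{2n+3}\big|_{n=0}=-\tfrac{k^2c(\omega)}{3}$ supplies the factor $-\tfrac13$ in the last two terms — by $\mathbf{S}^{\tilde k\tau}_{\partial D}\big|_{\omega=\Omega_0}=\mathbf{S}^{\mathrm{i}\Omega''_0c^{-1}_b\tau}_{\partial D}=\mathbf{e}_0$ and $e^{-\mathrm{i}\Omega_0t}=e^{\Omega''_0t}$, then by $-2\pi\mathrm{i}$, and finally using $c(\mathrm{i}\Omega''_0)=-\tfrac{3\delta\tau^2}{4\mu}$ and $(\Omega''_0)^4=\tfrac{256\mu^4\gamma^4}{(4\mu+3\delta\tau^2)^4}$, produces the prefactor $\tfrac{1152\pi(\mu\gamma)^3(\lambda+2\mu)}{\epsilon(4\mu+3\delta\tau^2)^4}$ multiplying $\mathbf{e}_0(\tilde{\boldsymbol x})e^{\Omega''_0t}$ and the bracket in \eqref{EE:scattered}. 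The $\mathcal{O}(\epsilon)$ gaps between $\Omega^\epsilon_0$ and $\Omega_0$ and between $\lambda_0(k)$ and its first-order expansion perturb the coefficient and the exponent only at order $\epsilon$ and are absorbed into the $\mathcal{O}(\epsilon^3\rho^{-M}/t)$ remainder.

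The main obstacle is this last computation together with the uniform-in-$\epsilon$ spectral claim: showing that the pole is simple and carried solely by the $0$-th mode requires the full strength of the asymptotic expansions of $\lambda_n(k)$ established in Section~\ref{sec:4}, and then the residue bookkeeping must be carried out carefully enough to recover both the exact constant and the error orders $\epsilon^3\rho^{-M}$ and $\epsilon^3\rho^{-M}/t$. The causality argument in the second step is comparatively routine once the phase $e^{\mathrm{i}\omega(T-t)}$ and the times $t^{\pm}_0$ have been assembled.
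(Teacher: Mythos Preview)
Your proposal is correct and follows essentially the same approach as the paper: contour deformation of $\int_{-\rho}^{\rho}$ into the upper/lower half-plane according to the sign of $T-t$, residue theorem picking up only the simple pole of the $n=m=0$ summand at $\Omega_0=\mathrm{i}\Omega''_0$, Paley--Wiener bounds on $f$, and a Jordan-type estimate on the semicircular arc. The paper differs only cosmetically: it makes the phase decomposition fully explicit by splitting the Kupradze kernel into its $s$- and $p$-wave pieces (Lemma~\ref{le:gamma}), producing four integrals $\boldsymbol\Phi^m_{1,n},\dots,\boldsymbol\Phi^m_{4,n}$ with the four travel times $\tfrac{c_b^{-1}\tau|\boldsymbol z-\mathbf s|}{c_\alpha}+\tfrac{c_b^{-1}\tau|\tilde{\boldsymbol x}-\tilde{\boldsymbol y}|}{c_\beta}$, $\alpha,\beta\in\{s,p\}$, and then bounds each arc integral separately; your single phase $e^{\mathrm{i}\omega(T-t)}$ with $T$ ranging over $[t_0^-+C_1,\,t_0^+]$ packages exactly this, and your Rouch\'e remark on $\Omega^\epsilon_0=\Omega_0+\mathcal{O}(\epsilon)$ is a refinement the paper omits (it works directly at $\Omega_0$).
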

\begin{rema}
As shown  in  formula \eqref{EE:scattered}, the $0$-th mode  is only needed to reconstruct the information of the truncated scattered field of the bubble-elastic structure in the time domain.  The proof of Theorem \ref{th:scattered} will be given in the sequel.
\end{rema}
\begin{rema}
Theorem \ref{th:scattered} only presents an approximation for  the low-frequency part of the scattered field in time domain. Nevertheless, as shown in the numerical section 6.4.5 introduced by \cite{Baldassari2021Modal}, the low-frequency part of the scattered field is actually a good  approximation for the scattered field. Up to now, there is no mathematical justification for that.
\end{rema}

Thanks to Lemma \ref{le:gamma}, it is clear to derive that
\begin{align*}%\label{Re:gamma}
\boldsymbol\Gamma^{\tilde{k}\tau}(\tilde{\boldsymbol x},\tilde{\boldsymbol y}) =&-e^{\mathrm{i}\frac{\tilde{k}\tau}{c_s}|\tilde{\boldsymbol x}-\tilde{\boldsymbol y}|}\frac{\boldsymbol{\mathcal{A}}(\tilde{\boldsymbol x},\tilde{\boldsymbol y},\frac{\tilde{k}\tau}{c_s})}{4\pi(\tilde{k}\tau)^2|\tilde{\boldsymbol x}-\tilde{\boldsymbol y}|}
-e^{\mathrm{i}\frac{\tilde{k}\tau}{c_p}|\tilde{\boldsymbol x}-\tilde{\boldsymbol y}|}\frac{\boldsymbol{\mathcal{B}}(\tilde{\boldsymbol x},\tilde{\boldsymbol y},\frac{\tilde{k}\tau}{c_p})}{4\pi(\tilde{k}\tau)^2|\tilde{\boldsymbol x}-\tilde{\boldsymbol y}|}.
\end{align*}
Therefore,
\begin{align*}
\mathbf{e}_{0}(\tilde{\boldsymbol x})=&\int_{\partial D}e^{\frac{-\Omega''_{0}c^{-1}_b\tau}{c_s}|\tilde{\boldsymbol x}-\tilde{\boldsymbol y}|}\frac{\boldsymbol{\mathcal{A}}(\tilde{\boldsymbol x},\tilde{\boldsymbol y},\frac{\mathrm{i}\Omega''_{0}c^{-1}_b\tau}{c_s})}{4\pi(\Omega''_{0}c^{-1}_b\tau)^2|\tilde{\boldsymbol x}-\tilde{\boldsymbol y}|}\breve{Y}^{0}_{0}(\tilde{\boldsymbol y})\boldsymbol\nu_{\tilde{\boldsymbol y}}\mathrm{d}\sigma(\tilde{\boldsymbol y})\\
&+\int_{\partial D}e^{\frac{-\Omega''_{0}c^{-1}_b\tau}{c_p}|\tilde{\boldsymbol x}-\tilde{\boldsymbol y}|}\frac{\boldsymbol{\mathcal{B}}(\tilde{\boldsymbol x},\tilde{\boldsymbol y},\frac{\mathrm{i}\Omega''_{0}c^{-1}_b\tau}{c_p})}{4\pi(\Omega''_{0}c^{-1}_b\tau)^2|\tilde{\boldsymbol x}-\tilde{\boldsymbol y}|}\breve{Y}^{0}_{0}(\tilde{\boldsymbol y})\boldsymbol\nu_{\tilde{\boldsymbol y}}\mathrm{d}\sigma(\tilde{\boldsymbol y}).
\end{align*}
By  the fact $\Omega''_{0}<0$,  it is straightforward to see that $\mathbf{e}_{0}(\tilde{\boldsymbol x})$ may tend to infinity as $|\tilde{\boldsymbol x}|$ tends to infinity. Even so, we will show that no terms  in \eqref{EE:scattered} diverge in next theorem.

\begin{theo}
Suppose that the incident wave $\breve{\mathrm{F}}$ has the form provided by \eqref{F:truncation}. Let $\Omega''_{0}$ be as seen in \eqref{E:Omega0}. If we define
\begin{align*}
\mathbf{E}_{0}(\tilde{\boldsymbol x}):=\mathbf{e}_{0}(\tilde{\boldsymbol x})e^{\frac{\Omega''_{0}c^{-1}_b\tau|\tilde{\boldsymbol x}-\boldsymbol z|}{c_s}},
\end{align*}
then there exists an integer $M\geq1$ such that for $0<\epsilon\leq\gamma_2/\rho$,
the truncated scattered field has the following form in the time domain:

{\rm(i)} One has that for $\tilde{\boldsymbol x}\in\mathbb{R}^3\backslash \overline{D},t\leq t^{-}_0$,
\begin{align*}
\mathbf{P}_{\rho}[\breve{\mathbf{u}}^{\mathrm{sca}}](\tilde{\boldsymbol x},t)=\mathcal{O}\left(\epsilon^3\rho^{-M}\right).
\end{align*}

{\rm(ii)} One has that for $\tilde{\boldsymbol x}\in\mathbb{R}^3\backslash \overline{D},t\geq t^{+}_0$,
\begin{align*}
\mathbf{P}_{\rho}[\breve{\mathbf{u}}^{\mathrm{sca}}](\tilde{\boldsymbol x},t)=&\mathscr{C}_{0}(\breve{\mathbf{u}}^{\mathbf{in}},\epsilon)\mathbf{E}_{0}(\tilde{\boldsymbol x})e^{\Omega''_{0} (t-t_0^+)}+\mathcal{O}\left({\epsilon^3\rho^{-M}}/{t}\right)
\end{align*}
with
\begin{align*}
\mathscr{C}_{0}(\breve{\mathbf{u}}^{\mathbf{in}},\epsilon)=&\frac{1152\pi(\mu\gamma)^3(\lambda+2\mu)C_{\epsilon}}{\epsilon(4\mu+3\delta\tau^2)^4}\Big(\delta\tau^2f(\mathrm{i}\Omega''_0)\langle\boldsymbol\nu\cdot(\boldsymbol\Gamma^{\mathrm{i}\Omega''_0 c^{-1}_b\tau}(\boldsymbol z,\mathbf{s})\mathbf{p}),\breve{Y}^0_0\rangle_{L^2(\partial D)}\nonumber\\
&+\epsilon\delta\tau^2f(\mathrm{i}\Omega''_0)\langle\boldsymbol\nu\cdot( \sum^3_{j=1}\frac{\tilde{x}_j-z_j}{\epsilon} \partial_{j}\boldsymbol\Gamma^{\mathrm{i}\Omega''_0 c^{-1}_b\tau}(\boldsymbol z,\mathbf{s})\mathbf{p}),\breve{Y}^0_0\rangle_{L^2(\partial D)}\nonumber\\
&-\frac{1}{3}\epsilon \lambda  f(\mathrm{i}\Omega''_0)c(\mathrm{i}\Omega''_0)\langle\sum^3_{i=1}\sum^3_{j=1}\partial_{{i}}\Gamma^{\mathrm{i}\Omega''_0 c^{-1}_b\tau}_{ij}(\boldsymbol z,\mathbf{s})p_{j},\breve{Y}^0_0\rangle_{L^2(\partial D)}\nonumber\\
&-\frac{1}{3}\epsilon \mu   f(\mathrm{i}\Omega''_0)c(\mathrm{i}\Omega''_0)\langle\boldsymbol\nu\cdot\big(\big( \boldsymbol{\mathcal{D}}^{\mathrm{i}\Omega''_0 c^{-1}_b\tau}(\boldsymbol z,\mathbf{s})+\boldsymbol{\mathcal{D}}^{\mathrm{i}\Omega''_0 c^{-1}_b\tau}(\boldsymbol z,\mathbf{s})^\top\big)\boldsymbol\nu\big),\breve{Y}^0_0\rangle_{L^2(\partial D)}\Big),
\end{align*}
where  $C_{\epsilon}=e^{\Omega''_{0} (\frac{c^{-1}_b\tau|\boldsymbol z-\mathbf{s}|}{c_s}+\frac{c^{-1}_b\tau\epsilon}{c_s}
+C_1)}$.
\end{theo}
\begin{proof}
It follows from \eqref{relation} that the term $\mathbf{E}_{0}(\tilde{\boldsymbol x})$ does not diverge. Moreover,
\begin{align*}
\mathbf{e}_{0}(\tilde{\boldsymbol x})e^{\Omega''_{0} t}&=\mathbf{e}_{0}(\tilde{\boldsymbol x})e^{\Omega''_{0} t^{+}_0}e^{\Omega''_{0} (t-t_0^+)}\\
&=\mathbf{e}_{0}(\tilde{\boldsymbol x})e^{\Omega''_{0} (\frac{c^{-1}_b\tau|\boldsymbol z-\mathbf{s}|}{c_s}+\frac{c^{-1}_b\tau|\tilde{\boldsymbol x}-\boldsymbol z|}{c_s}+\frac{c_b^{-1}\tau\epsilon}{c_s}
+C_1)}e^{\Omega''_{0} (t-t_0^+)}\\
&=C_{\epsilon}\mathbf{e}_{0}(\tilde{\boldsymbol x})e^{\frac{\Omega''_{0}c^{-1}_b\tau|\tilde{\boldsymbol x}-\boldsymbol z|}{c_s}}e^{\Omega''_{0} (t-t_0^+)}\\
&=C_{\epsilon}\mathbf{E}_{0}(\tilde{\boldsymbol x})e^{\Omega''_{0} (t-t_0^+)}.
\end{align*}
Combining this with Theorem \ref{th:scattered}, we can derive the conclusion of the theorem.

The proof is complete.
\end{proof}

\subsection{Proof of Theorem \ref{th:scattered}}
The rest of this section is to complete the proof of of Theorem \ref{th:scattered}.

\begin{proof}[Proof of Theorem \ref{th:scattered}]
Thanks to Proposition \ref{prop:approximation}, we recall the following spectral decomposition in the frequency domain
\begin{align*}
\breve{\mathbf{u}}^{\mathrm{sca}}(\tilde{\boldsymbol x},\omega)=\sum_{n=0}^{N}\sum_{m=-n}^{n}\boldsymbol{\Xi}^{m}_{n}(\tilde{\boldsymbol x},\omega),\quad \boldsymbol x\in\mathbb{R}^3\backslash \overline{D},
\end{align*}
where
\begin{align*}
\boldsymbol{\Xi}^{m}_{n}(\tilde{\boldsymbol x},\omega)=&\frac{1}{\epsilon}\frac{1}{\lambda_n(k)}\Big(\delta\tau^2k^2\omega^2f(\omega)\langle\boldsymbol\nu\cdot(\boldsymbol\Gamma^{\tilde{k}\tau}(\boldsymbol z,\mathbf{s})\mathbf{p}),\breve{Y}^m_n\rangle_{L^2(\partial D)}\nonumber\\
&+\epsilon\delta\tau^2k^2\omega^2f(\omega)\langle\boldsymbol\nu\cdot( \sum^3_{j=1}\frac{\tilde{x}_j-z_j}{\epsilon} \partial_{j}\boldsymbol\Gamma^{\tilde{k}\tau}(\boldsymbol z,\mathbf{s})\mathbf{p}),\breve{Y}^m_n\rangle_{L^2(\partial D)}\nonumber\\
&+\epsilon \lambda \omega^2f(\omega)(n-\frac{k^2c(\omega)}{2n+3})\langle\sum^3_{i=1}\sum^3_{j=1}\partial_{{i}}\Gamma^{\tilde{k}\tau}_{ij}(\boldsymbol z,\mathbf{s})p_{j},\breve{Y}^m_n\rangle_{L^2(\partial D)}\nonumber\\
&+\epsilon \mu \omega^2f(\omega)(n-\frac{k^2c(\omega)}{2n+3})\langle\boldsymbol\nu\cdot\big(\big( \boldsymbol{\mathcal{D}}^{\tilde{k}\tau}(\boldsymbol z,\mathbf{s})+\boldsymbol{\mathcal{D}}^{\tilde{k}\tau}(\boldsymbol z,\mathbf{s})^\top\big)\boldsymbol\nu\big),\breve{Y}^m_n\rangle_{L^2(\partial D)}\Big)\mathbf{S}^{\tilde{k}\tau }_{\partial D}[\breve{Y}^m_n\boldsymbol\nu](\tilde{\boldsymbol x}).
\end{align*}
It is straightforward to verify that
\begin{align*}
\mathbf{P}_{\rho}[\breve{\mathbf{u}}^{\mathrm{sca}}](\tilde{\boldsymbol x},t)=\int^{\rho}_{-\rho}\breve{\mathbf{u}}^{\mathrm{sca}}(\tilde{\boldsymbol x},\omega)e^{-\mathrm{i}\omega t}\mathrm{d}\omega
=&\sum_{n=0}^{N}\sum_{m=-n}^{n}\int^{\rho}_{-\rho}\boldsymbol{\Xi}^{m}_{n}(\tilde{\boldsymbol x},\omega)e^{-\mathrm{i}\omega t}\mathrm{d}\omega.
\end{align*}

Denote by the integration contour $\mathcal{C}^{\pm}_{\rho}$ a semicircular arc of radius $\rho$ in the upper $(+)$ or lower $(-)$ half-plane. Let $\mathcal{C}^{\pm}$ be the closed contour $\mathcal{C}^{\pm}_{\rho}\cup[-\rho,\rho]$.

Obviously, one obtains
\begin{align*}
\int^{\rho}_{-\rho}\boldsymbol{\Xi}^{m}_{n}(\tilde{\boldsymbol x},\omega)e^{-\mathrm{i}\omega t}\mathrm{d}\omega=\oint_{\mathcal{C}^\pm}\boldsymbol{\Xi}^{m}_{n}(\tilde{\boldsymbol x},\Omega)e^{-\mathrm{i}\Omega t}\mathrm{d}\Omega-\int_{\mathcal{C}^\pm_\rho}\boldsymbol{\Xi}^{m}_{n}(\tilde{\boldsymbol x},\Omega)e^{-\mathrm{i}\Omega t}\mathrm{d}\Omega.
\end{align*}
Since $\Omega_0=\Omega'_0+\mathrm{i}\Omega''_{0}$ with $\Omega''_{0}<0$, by the residue theorem, we have
\begin{align*}
\oint_{\mathcal{C}^+}\boldsymbol{\Xi}^{m}_{n}(\tilde{\boldsymbol x},\Omega)e^{-\mathrm{i}\Omega t}\mathrm{d}\Omega&=0,&&n\geq0,|m|\leq n,\\
\oint_{\mathcal{C}^-}\boldsymbol{\Xi}^{m}_{n}(\tilde{\boldsymbol x},\Omega)e^{-\mathrm{i}\Omega t}\mathrm{d}\Omega&=0,&&n\geq1,|m|\leq n,\\
\oint_{\mathcal{C}^-}\boldsymbol{\Xi}^{0}_{0}(\tilde{\boldsymbol x},\Omega)e^{-\mathrm{i}\Omega t}\mathrm{d}\Omega&=2\pi\mathrm{i}\mathrm{Res}(\boldsymbol{\Xi}^{0}_{0}(\tilde{\boldsymbol x},\Omega)e^{-\mathrm{i}\Omega t},\Omega_{0}),&&n=0,m=0.
\end{align*}
As $\Omega_{0}$ is a simple pole, we can derive that
\begin{align*}
&\oint_{\mathcal{C}^-}\boldsymbol{\Xi}^{0}_{0}(\tilde{\boldsymbol x},\Omega)e^{-\mathrm{i}\Omega t}\mathrm{d}\Omega\\
=&2\pi\mathrm{i}\mathrm{Res}(\boldsymbol{\Xi}^{0}_{0}(\tilde{\boldsymbol x},\Omega),\Omega_{0})e^{-\mathrm{i}\Omega_{0} t}\\
=&2\pi\mathrm{i}\Big(\frac{1}{\epsilon}\frac{1}{\lambda_0(\Omega\epsilon c^{-1}_b)}\Big(\delta\tau^2(\Omega\epsilon c^{-1}_b)^2\Omega^2f(\Omega)\langle\boldsymbol\nu\cdot(\boldsymbol\Gamma^{\Omega c^{-1}_b\tau}(\boldsymbol z,\mathbf{s})\mathbf{p}),\breve{Y}^0_0\rangle_{L^2(\partial D)}\nonumber\\
&+\epsilon\delta\tau^2(\Omega\epsilon c^{-1}_b)^2\Omega^2f(\Omega)\langle\boldsymbol\nu\cdot( \sum^3_{j=1}\frac{\tilde{x}_j-z_j}{\epsilon} \partial_{j}\boldsymbol\Gamma^{\Omega c^{-1}_b\tau}(\boldsymbol z,\mathbf{s})\mathbf{p}),\breve{Y}^0_0\rangle_{L^2(\partial D)}\nonumber\\
&-\frac{1}{3}\epsilon \lambda (\Omega\epsilon c^{-1}_b)^2\Omega^2 f(\Omega)c(\Omega)\langle\sum^3_{i=1}\sum^3_{j=1}\partial_{{i}}\Gamma^{\Omega c^{-1}_b\tau}_{ij}(\boldsymbol z,\mathbf{s})p_{j},\breve{Y}^0_0\rangle_{L^2(\partial D)}\nonumber\\
&-\frac{1}{3}\epsilon \mu  (\Omega\epsilon c^{-1}_b)^2\Omega^2 f(\Omega)c(\Omega)\langle\boldsymbol\nu\cdot\big(\big( \boldsymbol{\mathcal{D}}^{\Omega c^{-1}_b\tau}(\boldsymbol z,\mathbf{s})+\boldsymbol{\mathcal{D}}^{\Omega c^{-1}_b\tau}(\boldsymbol z,\mathbf{s})^\top\big)\boldsymbol\nu\big),\breve{Y}^0_0\rangle_{L^2(\partial D)}\Big)\nonumber\\
&\times\mathbf{S}^{\Omega c^{-1}_b\tau}_{\partial D}[\breve{Y}^0_0\boldsymbol\nu](\tilde{\boldsymbol x})(\Omega-\Omega_0)\Big)\Big|_{\Omega=\Omega_0}e^{-\mathrm{i}\Omega_{0} t}\\
=&\frac{1152\pi(\mu\gamma)^3(\lambda+2\mu)}{\epsilon(4\mu+3\delta\tau^2)^4}\Big(\delta\tau^2f(\mathrm{i}\Omega''_0)\langle\boldsymbol\nu\cdot(\boldsymbol\Gamma^{\mathrm{i}\Omega''_0 c^{-1}_b\tau}(\boldsymbol z,\mathbf{s})\mathbf{p}),\breve{Y}^0_0\rangle_{L^2(\partial D)}\nonumber\\
&+\epsilon\delta\tau^2f(\mathrm{i}\Omega''_0)\langle\boldsymbol\nu\cdot( \sum^3_{j=1}\frac{\tilde{x}_j-z_j}{\epsilon} \partial_{j}\boldsymbol\Gamma^{\mathrm{i}\Omega''_0 c^{-1}_b\tau}(\boldsymbol z,\mathbf{s})\mathbf{p}),\breve{Y}^0_0\rangle_{L^2(\partial D)}\nonumber\\
&-\frac{1}{3}\epsilon \lambda  f(\mathrm{i}\Omega''_0)c(\mathrm{i}\Omega''_0)\langle\sum^3_{i=1}\sum^3_{j=1}\partial_{{i}}\Gamma^{\mathrm{i}\Omega''_0 c^{-1}_b\tau}_{ij}(\boldsymbol z,\mathbf{s})p_{j},\breve{Y}^0_0\rangle_{L^2(\partial D)}\nonumber\\
&-\frac{1}{3}\epsilon \mu   f(\mathrm{i}\Omega''_0)c(\mathrm{i}\Omega''_0)\langle\boldsymbol\nu\cdot\big(\big( \boldsymbol{\mathcal{D}}^{\mathrm{i}\Omega''_0 c^{-1}_b\tau}(\boldsymbol z,\mathbf{s})+\boldsymbol{\mathcal{D}}^{\mathrm{i}\Omega''_0 c^{-1}_b\tau}(\boldsymbol z,\mathbf{s})^\top\big)\boldsymbol\nu\big),\breve{Y}^0_0\rangle_{L^2(\partial D)}\Big)\nonumber\\
&\times\mathbf{S}^{\mathrm{i}\Omega''_0 c^{-1}_b\tau}_{\partial D}[\breve{Y}^0_0\boldsymbol\nu](\tilde{\boldsymbol x})e^{\Omega''_{0} t}.
\end{align*}

It remains to estimate $\int_{\mathcal{C}^\pm_\rho}\boldsymbol{\Xi}^{m}_{n}(\tilde{\boldsymbol x},\Omega)e^{-\mathrm{i}\Omega t}\mathrm{d}\Omega$. From the expression of $\boldsymbol{\Xi}^{m}_{n}(\tilde{\boldsymbol x},\Omega)$, it follows that
\begin{align*}
&\int_{\mathcal{C}^{\pm}_{\rho}}\boldsymbol{\Xi}^{m}_{n}(\tilde{\boldsymbol x},\Omega)e^{-\mathrm{i}\Omega t}\mathrm{d}\Omega
\\
=&\int_{\mathcal{C}^\pm_\rho}\frac{1}{\epsilon}\frac{1}{\lambda_n(\Omega\epsilon c^{-1}_b)}\Big(\delta\tau^2(\Omega\epsilon c^{-1}_b)^2\Omega^2f(\Omega)\langle\boldsymbol\nu\cdot(\boldsymbol\Gamma^{\Omega c^{-1}_b\tau}(\boldsymbol z,\mathbf{s})\mathbf{p}),\breve{Y}^m_n\rangle_{L^2(\partial D)}\nonumber\\
&+\epsilon\delta\tau^2(\Omega\epsilon c^{-1}_b)^2\Omega^2f(\Omega)\langle\boldsymbol\nu\cdot( \sum^3_{j=1}\frac{\tilde{x}_j-z_j}{\epsilon} \partial_{j}\boldsymbol\Gamma^{\Omega c^{-1}_b\tau}(\boldsymbol z,\mathbf{s})\mathbf{p}),\breve{Y}^m_n\rangle_{L^2(\partial D)}\nonumber\\
&+\epsilon \lambda\Omega^2 f(\Omega)(n-\frac{(\Omega\epsilon c^{-1}_b)^2c(\Omega)}{2n+3})\langle\sum^3_{i=1}\sum^3_{j=1}\partial_{{i}}\Gamma^{\Omega c^{-1}_b\tau}_{ij}(\boldsymbol z,\mathbf{s})p_{j},\breve{Y}^m_n\rangle_{L^2(\partial D)}\nonumber\\
&+\epsilon \mu\Omega^2 f(\Omega)(n-\frac{(\Omega\epsilon c^{-1}_b)^2c(\Omega)}{2n+3})\langle\boldsymbol\nu\cdot\big(\big( \boldsymbol{\mathcal{D}}^{\Omega c^{-1}_b\tau}(\boldsymbol z,\mathbf{s})+\boldsymbol{\mathcal{D}}^{\Omega c^{-1}_b\tau}(\boldsymbol z,\mathbf{s})^\top\big)\boldsymbol\nu\big),\breve{Y}^m_n\rangle_{L^2(\partial D)}\Big)\\
&\times\mathbf{S}^{\Omega c^{-1}_b\tau }_{\partial D}[\breve{Y}^m_n\boldsymbol\nu](\tilde{\boldsymbol x})e^{-\mathrm{i}\Omega t}\mathrm{d}\Omega.
\end{align*}
It follows from  Lemma \ref{le:gamma} that
\begin{align*}
\boldsymbol\Gamma^{\Omega c^{-1}_b\tau}(\boldsymbol z,\mathbf{s})\mathbf{p}=&-e^{\mathrm{i}\frac{\Omega c^{-1}_b\tau}{c_s}|\boldsymbol z-\mathbf{s}|}\frac{\boldsymbol{\mathcal{A}}(\boldsymbol z,\mathbf{s},\frac{\Omega c^{-1}_b\tau}{c_s})\mathbf{p}}{4\pi(\Omega c^{-1}_b\tau)^2|\boldsymbol z-\mathbf{s}|}
-e^{\mathrm{i}\frac{\Omega c^{-1}_b\tau}{c_p}|\boldsymbol z-\mathbf{s}|}\frac{\boldsymbol{\mathcal{B}}(\boldsymbol z,\mathbf{s},\frac{\Omega c^{-1}_b\tau}{c_p})\mathbf{p}}{4\pi(\Omega c^{-1}_b\tau)^2|\boldsymbol z-\mathbf{s}|},\\
\sum^3_{j=1}\frac{\tilde{x}_j-z_j}{\epsilon} \partial_{j}\boldsymbol\Gamma^{\Omega c^{-1}_b\tau}(\boldsymbol z,\mathbf{s})\mathbf{p}=&-e^{\mathrm{i}\frac{\Omega c^{-1}_b\tau}{c_s}|\boldsymbol z-\mathbf{s}|}\frac{\sum^3_{j=1}\frac{\tilde{x}_j-z_j}{\epsilon} \boldsymbol{\mathscr{A}}^{(j)}(\boldsymbol z,\mathbf{s},\frac{\Omega c^{-1}_b\tau}{c_s})\mathbf{p}}{4\pi(\Omega c^{-1}_b\tau)^2|\boldsymbol z-\mathbf{s}|}\\
&-e^{\mathrm{i}\frac{\Omega c^{-1}_b\tau}{c_p}|\boldsymbol z-\mathbf{s}|}\frac{\sum^3_{j=1}\frac{\tilde{x}_j-z_j}{\epsilon} \boldsymbol{\mathscr{B}}^{(j)}(\boldsymbol z,\mathbf{s},\frac{\Omega c^{-1}_b\tau}{c_p})\mathbf{p}}{4\pi(\Omega c^{-1}_b\tau)^2|\boldsymbol z-\mathbf{s}|}.
\end{align*}
In addition,
\begin{align*}
\sum^3_{i,j=1}\partial_{{i}}\Gamma^{\Omega c^{-1}_b\tau}_{ij}(\boldsymbol z,\mathbf{s})p_{j}=&-e^{\mathrm{i}\frac{\Omega c^{-1}_b\tau}{c_s}|\boldsymbol z-\mathbf{s}|}
\frac{\mathscr{P}(\boldsymbol z,\mathbf{s},\frac{\Omega c^{-1}_b\tau}{c_s})}{4\pi(\Omega c^{-1}_b\tau)^2|\boldsymbol z-\mathbf{s}|}-
e^{\mathrm{i}\frac{\Omega c^{-1}_b\tau}{c_p}|\boldsymbol z-\mathbf{s}|}\frac{\mathscr{Q}(\boldsymbol z,\mathbf{s},\frac{\Omega c^{-1}_b\tau}{c_p})}{{4\pi(\Omega c^{-1}_b\tau)^2|\boldsymbol z-\mathbf{s}|}},\\
\big( \boldsymbol{\mathcal{D}}^{\Omega c^{-1}_b\tau}(\boldsymbol z,\mathbf{s})+\boldsymbol{\mathcal{D}}^{\Omega c^{-1}_b\tau}(\boldsymbol z,\mathbf{s})^\top\big)\boldsymbol\nu =&-e^{\mathrm{i}\frac{\Omega c^{-1}_b\tau}{c_s}|\boldsymbol z-\mathbf{s}|}\frac{\boldsymbol{\mathscr{M}}(\boldsymbol z,\mathbf{s},\frac{\Omega c^{-1}_b\tau}{c_s})\boldsymbol\nu}{4\pi(\Omega c^{-1}_b\tau)^2|\boldsymbol z-\mathbf{s}|}-e^{\mathrm{i}\frac{\Omega c^{-1}_b\tau}{c_p}|\boldsymbol z-\mathbf{s}|}\frac{\boldsymbol{\mathscr{N}}(\boldsymbol z,\mathbf{s},\frac{\Omega c^{-1}_b\tau}{c_p})\boldsymbol\nu}{4\pi(\Omega c^{-1}_b\tau)^2|\boldsymbol z-\mathbf{s}|},
\end{align*}
where $\mathscr{P}(\boldsymbol z,\mathbf{s},\frac{\Omega c^{-1}_b\tau}{c_s})=\sum^3_{i,j=1}\mathscr{P}_{ij}p_{j},\mathscr{Q}(\boldsymbol z,\mathbf{s},\frac{\Omega c^{-1}_b\tau}{c_p})= \sum^3_{i,j=1}\mathscr{Q}_{ij}p_{j},$
with
\begin{align*}
\mathscr{P}_{ii}=\mathscr{A}^{(\ell)}_{\ell\ell}|_{\ell=i,\tilde{\boldsymbol x}=\boldsymbol z},\quad\mathscr{P}_{ij}=\mathscr{A}^{(\ell)}_{\ell j}|_{\ell=i, \tilde{\boldsymbol x}=\boldsymbol z},i\neq j,\\
\mathscr{Q}_{ii}=\mathscr{B}^{(\ell)}_{\ell\ell}|_{\ell=i,\tilde{\boldsymbol x}=\boldsymbol z},\quad \mathscr{Q}_{ij}=\mathscr{B}^{(\ell)}_{\ell j}|_{\ell=i, \tilde{\boldsymbol x}=\boldsymbol z},i\neq j,
\end{align*}
and $\boldsymbol{\mathscr{M}}(\boldsymbol z,\mathbf{s},\frac{\Omega c^{-1}_b\tau}{c_s})=(\mathscr{M}_{ij})^3_{i,j=1},\boldsymbol{\mathscr{N}}(\boldsymbol z,\mathbf{s},\frac{\Omega c^{-1}_b\tau}{c_p})=(\mathscr{N}_{ij})^3_{i,j=1}$ are $3\times3$ matrix whose entries are composed of $\mathscr{A}^{(\ell)}_{ij}|_{\tilde{\boldsymbol x}=\boldsymbol z},\mathscr{B}^{(\ell)}_{ij}|_{\tilde{\boldsymbol x}=\boldsymbol z}$ and $p_j$, $i,j,\ell=1,2,3$.

Observe that
\begin{align*}
\mathbf{S}^{\Omega c^{-1}_b\tau }_{\partial D}[\breve{Y}^m_n\boldsymbol\nu](\tilde{\boldsymbol x})=&\int_{\partial D}-e^{\mathrm{i}\frac{\Omega c^{-1}_b\tau}{c_s}|\tilde{\boldsymbol x}-\tilde{\boldsymbol y}|}\frac{\boldsymbol{\mathcal{A}}(\tilde{\boldsymbol x},\tilde{\boldsymbol y},\frac{\Omega c^{-1}_b\tau}{c_s})}{4\pi(\Omega c^{-1}_b\tau)^2|\tilde{\boldsymbol x}-\tilde{\boldsymbol y}|}\breve{Y}^{m}_{n}(\tilde{\boldsymbol y})\boldsymbol\nu_{\tilde{\boldsymbol y}}\mathrm{d}\sigma(\tilde{\boldsymbol y})\\
&+\int_{\partial D}-e^{\mathrm{i}\frac{\Omega c^{-1}_b\tau}{c_p}|\tilde{\boldsymbol x}-\tilde{\boldsymbol y}|}\frac{\boldsymbol{\mathcal{B}}(\tilde{\boldsymbol x},\tilde{\boldsymbol y},\frac{\Omega c^{-1}_b\tau}{c_p})}{4\pi(\Omega c^{-1}_b\tau)^2|\tilde{\boldsymbol x}-\tilde{\boldsymbol y}|}\breve{Y}^{m}_{n}(\tilde{\boldsymbol y})\boldsymbol\nu_{\tilde{\boldsymbol y}}\mathrm{d}\sigma(\tilde{\boldsymbol y}).
\end{align*}

Set $\tilde{\mathbf v}=(\tilde{v}_j)_{j=1}^3\in\partial D$ and define
\begin{align*}
&\boldsymbol{\mathscr{P}}^m_{n}(\tilde{\mathbf{v}},\tilde{\boldsymbol y},\Omega)\\
=&\frac{\delta\tau^2(\Omega\epsilon c^{-1}_b)^2\Omega^2}{\epsilon\lambda_{n}(\Omega\epsilon c^{-1}_b)}\frac{\boldsymbol\nu_{\tilde{\mathbf{v}}}\cdot(\boldsymbol{\mathcal{A}}(\boldsymbol z,\mathbf{s},\frac{\Omega c^{-1}_b\tau}{c_s})\mathbf{p})\overline{\breve{Y}^{m}_{n}(\tilde{\mathbf{v}})}
\boldsymbol{\mathcal{A}}(\tilde{\boldsymbol x},\tilde{\boldsymbol y},\frac{\Omega c^{-1}_b\tau}{c_s})\breve{Y}^{m}_{n}(\tilde{\boldsymbol y})\boldsymbol\nu_{\tilde{\boldsymbol{y}}}}{16\pi^2(\Omega c^{-1}_b\tau)^4|\boldsymbol z-\mathbf{s}||\tilde{\boldsymbol x}-\tilde{\boldsymbol y}|}\\
&+\frac{\delta\tau^2(\Omega\epsilon c^{-1}_b)^2\Omega^2}{\lambda_{n}(\Omega\epsilon c^{-1}_b)}\frac{\boldsymbol\nu_{\tilde{\mathbf{v}}}\cdot(\sum^3_{j=1}\frac{\tilde{v}_j-z_j}{\epsilon} \boldsymbol{\mathscr{A}}^{(j)}(\boldsymbol z,\mathbf{s},\frac{\Omega c^{-1}_b\tau}{c_s})\mathbf{p})\overline{\breve{Y}^{m}_{n}(\tilde{\mathbf{v}})}
\boldsymbol{\mathcal{A}}(\tilde{\boldsymbol x},\tilde{\boldsymbol y},\frac{\Omega c^{-1}_b\tau}{c_s})\breve{Y}^{m}_{n}(\tilde{\boldsymbol y})\boldsymbol\nu_{\tilde{\boldsymbol{y}}}}{16\pi^2(\Omega c^{-1}_b\tau)^4|\boldsymbol z-\mathbf{s}||\tilde{\boldsymbol x}-\tilde{\boldsymbol y}|}\\
&+\frac{\lambda\Omega^2(n-\frac{(\Omega\epsilon c^{-1}_b)^2c(\Omega)}{2n+3})}{\lambda_n(\Omega\epsilon c^{-1}_b)}\frac{\mathscr{P}(\boldsymbol z,\mathbf{s},\frac{\Omega c^{-1}_b\tau}{c_s})\overline{\breve{Y}^{m}_{n}(\tilde{\mathbf{v}})}
\boldsymbol{\mathcal{A}}(\tilde{\boldsymbol x},\tilde{\boldsymbol y},\frac{\Omega c^{-1}_b\tau}{c_s})\breve{Y}^{m}_{n}(\tilde{\boldsymbol y})\boldsymbol\nu_{\tilde{\boldsymbol{y}}}}{16\pi^2(\Omega c^{-1}_b\tau)^4|\boldsymbol z-\mathbf{s}||\tilde{\boldsymbol x}-\tilde{\boldsymbol y}|}\\
&+\frac{ \mu\Omega^2(n-\frac{(\Omega\epsilon c^{-1}_b)^2c(\Omega)}{2n+3})}{\lambda_{n}(\Omega\epsilon c^{-1}_b)}\frac{\boldsymbol\nu_{\tilde{\mathbf{v}}}\cdot(\boldsymbol{\mathscr{M}}(\boldsymbol z,\mathbf{s},\frac{\Omega c^{-1}_b\tau}{c_s})\boldsymbol\nu_{\tilde{\mathbf{v}}})\overline{\breve{Y}^{m}_{n}(\tilde{\mathbf{v}})}
\boldsymbol{\mathcal{A}}(\tilde{\boldsymbol x},\tilde{\boldsymbol y},\frac{\Omega c^{-1}_b\tau}{c_s})\breve{Y}^{m}_{n}(\tilde{\boldsymbol y})\boldsymbol\nu_{\tilde{\boldsymbol y}}}{16\pi^2(\Omega c^{-1}_b\tau)^4|\boldsymbol z-\mathbf{s}||\tilde{\boldsymbol x}-\tilde{\boldsymbol y}|},\\
&\boldsymbol{\mathscr{Q}}^m_{n}(\tilde{\mathbf{v}},\tilde{\boldsymbol y},\Omega)\\
=&\frac{\delta\tau^2(\Omega\epsilon c^{-1}_b)^2\Omega^2}{\epsilon\lambda_{n}(\Omega\epsilon c^{-1}_b)}\frac{\boldsymbol\nu_{\tilde{\mathbf{v}}}\cdot(\boldsymbol{\mathcal{A}}(\boldsymbol z,\mathbf{s},\frac{\Omega c^{-1}_b\tau}{c_s})\mathbf{p})\overline{\breve{Y}^{m}_{n}(\tilde{\mathbf{v}})}
\boldsymbol{\mathcal{B}}(\tilde{\boldsymbol x},\tilde{\boldsymbol y},\frac{\Omega c^{-1}_b\tau}{c_p})\breve{Y}^{m}_{n}(\tilde{\boldsymbol y})\boldsymbol\nu_{\tilde{\boldsymbol{y}}}}{16\pi^2(\Omega c^{-1}_b\tau)^4|\boldsymbol z-\mathbf{s}||\tilde{\boldsymbol x}-\tilde{\boldsymbol y}|}\\
&+\frac{\delta\tau^2(\Omega\epsilon c^{-1}_b)^2\Omega^2}{\lambda_{n}(\Omega\epsilon c^{-1}_b)}\frac{\boldsymbol\nu_{\tilde{\mathbf{v}}}\cdot(\sum^3_{j=1}\frac{\tilde{v}_j-z_j}{\epsilon} \boldsymbol{\mathscr{A}}^{(j)}(\boldsymbol z,\mathbf{s},\frac{\Omega c^{-1}_b\tau}{c_s})\mathbf{p})\overline{\breve{Y}^{m}_{n}(\tilde{\mathbf{v}})}
\boldsymbol{\mathcal{B}}(\tilde{\boldsymbol x},\tilde{\boldsymbol y},\frac{\Omega c^{-1}_b\tau}{c_p})\breve{Y}^{m}_{n}(\tilde{\boldsymbol y})\boldsymbol\nu_{\tilde{\boldsymbol{y}}}}{16\pi^2(\Omega c^{-1}_b\tau)^4|\boldsymbol z-\mathbf{s}||\tilde{\boldsymbol x}-\tilde{\boldsymbol y}|}\\
&+\frac{\lambda\Omega^2(n-\frac{(\Omega\epsilon c^{-1}_b)^2c(\Omega)}{2n+3})}{\lambda_n(\Omega \epsilon c^{-1}_b)}\frac{\mathscr{P}(\boldsymbol z,\mathbf{s},\frac{\Omega c^{-1}_b\tau}{c_s})\overline{\breve{Y}^{m}_{n}(\tilde{\mathbf{v}})}
\boldsymbol{\mathcal{B}}(\tilde{\boldsymbol x},\tilde{\boldsymbol y},\frac{\Omega c^{-1}_b\tau}{c_p})\breve{Y}^{m}_{n}(\tilde{\boldsymbol y})\boldsymbol\nu_{\tilde{\boldsymbol{y}}}}{16\pi^2(\Omega c^{-1}_b\tau)^4|\boldsymbol z-\mathbf{s}||\tilde{\boldsymbol x}-\tilde{\boldsymbol y}|}\\
&+\frac{ \mu\Omega^2(n-\frac{(\Omega\epsilon c^{-1}_b)^2c(\Omega)}{2n+3})}{\lambda_n(\Omega \epsilon c^{-1}_b)}\frac{\boldsymbol\nu_{\tilde{\mathbf{v}}}\cdot(\boldsymbol{\mathscr{M}}(\boldsymbol z,\mathbf{s},\frac{\Omega c^{-1}_b\tau}{c_s})\boldsymbol\nu_{\tilde{\mathbf{v}}})\overline{\breve{Y}^{m}_{n}(\tilde{\mathbf{v}})}
\boldsymbol{\mathcal{B}}(\tilde{\boldsymbol x},\tilde{\boldsymbol y},\frac{\Omega c^{-1}_b\tau}{c_p})\breve{Y}^{m}_{n}(\tilde{\boldsymbol y})\boldsymbol\nu_{\tilde{\boldsymbol y}}}{16\pi^2(\Omega c^{-1}_b\tau)^4|\boldsymbol z-\mathbf{s}||\tilde{\boldsymbol x}-\tilde{\boldsymbol y}|},\\
&\boldsymbol{\mathscr{R}}^m_{n}(\tilde{\mathbf{v}},\tilde{\boldsymbol y},\Omega)\\
=&\frac{\delta\tau^2(\Omega\epsilon c^{-1}_b)^2\Omega^2}{\epsilon\lambda_{n}(\Omega\epsilon c^{-1}_b)}\frac{\boldsymbol\nu_{\tilde{\mathbf{v}}}\cdot(\boldsymbol{\mathcal{B}}(\boldsymbol z,\mathbf{s},\frac{\Omega c^{-1}_b\tau}{c_p})\mathbf{p})\overline{\breve{Y}^{m}_{n}(\tilde{\mathbf{v}})}
\boldsymbol{\mathcal{A}}(\tilde{\boldsymbol x},\tilde{\boldsymbol y},\frac{\Omega c^{-1}_b\tau}{c_s})\breve{Y}^{m}_{n}(\tilde{\boldsymbol y})\boldsymbol\nu_{\tilde{\boldsymbol{y}}}}{16\pi^2(\Omega c^{-1}_b\tau)^4|\boldsymbol z-\mathbf{s}||\tilde{\boldsymbol x}-\tilde{\boldsymbol y}|}\\
&+\frac{\delta\tau^2(\Omega\epsilon c^{-1}_b)^2\Omega^2}{\lambda_{n}(\Omega\epsilon c^{-1}_b)}\frac{\boldsymbol\nu_{\tilde{\mathbf{v}}}\cdot(\sum^3_{j=1}\frac{\tilde{v}_j-z_j}{\epsilon} \boldsymbol{\mathscr{B}}^{(j)}(\boldsymbol z,\mathbf{s},\frac{\Omega c^{-1}_b\tau}{c_p})\mathbf{p})\overline{\breve{Y}^{m}_{n}(\tilde{\mathbf{v}})}
\boldsymbol{\mathcal{A}}(\tilde{\boldsymbol x},\tilde{\boldsymbol y},\frac{\Omega c^{-1}_b\tau}{c_s})\breve{Y}^{m}_{n}(\tilde{\boldsymbol y})\boldsymbol\nu_{\tilde{\boldsymbol{y}}}}{16\pi^2(\Omega c^{-1}_b\tau)^4|\boldsymbol z-\mathbf{s}||\tilde{\boldsymbol x}-\tilde{\boldsymbol y}|}\\
&+\frac{ \lambda\Omega^2(n-\frac{(\Omega\epsilon c^{-1}_b)^2c(\Omega)}{2n+3})}{\lambda_n(\Omega\epsilon c^{-1}_b)}\frac{\mathscr{Q}(\boldsymbol z,\mathbf{s},\frac{\Omega c^{-1}_b\tau}{c_p})\overline{\breve{Y}^{m}_{n}(\tilde{\mathbf{v}})}
\boldsymbol{\mathcal{A}}(\tilde{\boldsymbol x},\tilde{\boldsymbol y},\frac{\Omega c^{-1}_b\tau}{c_s})\breve{Y}^{m}_{n}(\tilde{\boldsymbol y})\boldsymbol\nu_{\tilde{\boldsymbol{y}}}}{16\pi^2(\Omega c^{-1}_b\tau)^4|\boldsymbol z-\mathbf{s}||\tilde{\boldsymbol x}-\tilde{\boldsymbol y}|}\\
&+\frac{\mu\Omega^2(n-\frac{(\Omega\epsilon c^{-1}_b)^2c(\Omega)}{2n+3})}{\lambda_{n}(\Omega\epsilon c^{-1}_b)}\frac{\boldsymbol\nu_{\tilde{\mathbf{v}}}\cdot(\boldsymbol{\mathscr{N}}(\boldsymbol z,\mathbf{s},\frac{\Omega c^{-1}_b\tau}{c_p})\boldsymbol\nu_{\tilde{\mathbf{v}}})\overline{\breve{Y}^{m}_{n}(\tilde{\mathbf{v}})}
\boldsymbol{\mathcal{A}}(\tilde{\boldsymbol x},\tilde{\boldsymbol y},\frac{\Omega c^{-1}_b\tau}{c_s})\breve{Y}^{m}_{n}(\tilde{\boldsymbol y})\boldsymbol\nu_{\tilde{\boldsymbol y}}}{16\pi^2(\Omega c^{-1}_b\tau)^4|\boldsymbol z-\mathbf{s}||\tilde{\boldsymbol x}-\tilde{\boldsymbol y}|},
\end{align*}
and
\begin{align*}
&\boldsymbol{\mathscr{S}}^m_{n}(\tilde{\mathbf{v}},\tilde{\boldsymbol y},\Omega)\\
=&\frac{\delta\tau^2(\Omega\epsilon c^{-1}_b)^2\Omega^2}{\epsilon\lambda_{n}(\Omega\epsilon c^{-1}_b)}\frac{\boldsymbol\nu_{\tilde{\mathbf{v}}}\cdot(\boldsymbol{\mathcal{B}}(\boldsymbol z,\mathbf{s},\frac{\Omega c^{-1}_b\tau}{c_p})\mathbf{p})\overline{\breve{Y}^{m}_{n}(\tilde{\mathbf{v}})}
\boldsymbol{\mathcal{B}}(\tilde{\boldsymbol x},\tilde{\boldsymbol y},\frac{\Omega c^{-1}_b\tau}{c_p})\breve{Y}^{m}_{n}(\tilde{\boldsymbol y})\boldsymbol\nu_{\tilde{\boldsymbol{y}}}}{16\pi^2(\Omega c^{-1}_b\tau)^4|\boldsymbol z-\mathbf{s}||\tilde{\boldsymbol x}-\tilde{\boldsymbol y}|}\\
&+\frac{\delta\tau^2(\Omega\epsilon c^{-1}_b)^2\Omega^2}{\lambda_{n}(\Omega\epsilon c^{-1}_b)}\frac{\boldsymbol\nu_{\tilde{\mathbf{v}}}\cdot(\sum^3_{j=1}\frac{\tilde{v}_j-z_j}{\epsilon} \boldsymbol{\mathscr{B}}^{(j)}(\boldsymbol z,\mathbf{s},\frac{\Omega c^{-1}_b\tau}{c_p})\mathbf{p})\overline{\breve{Y}^{m}_{n}(\tilde{\mathbf{v}})}
\boldsymbol{\mathcal{B}}(\tilde{\boldsymbol x},\tilde{\boldsymbol y},\frac{\Omega c^{-1}_b\tau}{c_p})\breve{Y}^{m}_{n}(\tilde{\boldsymbol y})\boldsymbol\nu_{\tilde{\boldsymbol{y}}}}{16\pi^2(\Omega c^{-1}_b\tau)^4|\boldsymbol z-\mathbf{s}||\tilde{\boldsymbol x}-\tilde{\boldsymbol y}|}\\
&+\frac{\lambda\Omega^2(n-\frac{(\Omega\epsilon c^{-1}_b)^2c(\Omega)}{2n+3})}{\lambda_n(\Omega\epsilon c^{-1}_b)}\frac{\mathscr{Q}(\boldsymbol z,\mathbf{s},\frac{\Omega c^{-1}_b\tau}{c_p})\overline{\breve{Y}^{m}_{n}(\tilde{\mathbf{v}})}
\boldsymbol{\mathcal{B}}(\tilde{\boldsymbol x},\tilde{\boldsymbol y},\frac{\Omega c^{-1}_b\tau}{c_p})\breve{Y}^{m}_{n}(\tilde{\boldsymbol y})\boldsymbol\nu_{\tilde{\boldsymbol{y}}}}{16\pi^2(\Omega c^{-1}_b\tau)^4|\boldsymbol z-\mathbf{s}||\tilde{\boldsymbol x}-\tilde{\boldsymbol y}|}\\
&+\frac{\mu\Omega^2(n-\frac{(\Omega\epsilon c^{-1}_b)^2c(\Omega)}{2n+3})}{\lambda_n(\Omega\epsilon c^{-1}_b)}\frac{\boldsymbol\nu_{\tilde{\mathbf{v}}}\cdot(\boldsymbol{\mathscr{N}}(\boldsymbol z,\mathbf{s},\frac{\Omega c^{-1}_b\tau}{c_p})\boldsymbol\nu_{\tilde{\mathbf{v}}})\overline{\breve{Y}^{m}_{n}(\tilde{\mathbf{v}})}
\boldsymbol{\mathcal{B}}(\tilde{\boldsymbol x},\tilde{\boldsymbol y},\frac{\Omega c^{-1}_b\tau}{c_p})\breve{Y}^{m}_{n}(\tilde{\boldsymbol y})\boldsymbol\nu_{\tilde{\boldsymbol y}}}{16\pi^2(\Omega c^{-1}_b\tau)^4|\boldsymbol z-\mathbf{s}||\tilde{\boldsymbol x}-\tilde{\boldsymbol y}|},
 \end{align*}
where $\boldsymbol{\mathscr{P}}^m_{n}(\cdot,\cdot,\Omega),\boldsymbol{\mathscr{Q}}^m_{n}(\cdot,\cdot,\Omega),\boldsymbol{\mathscr{R}}^m_{n}(\cdot,\cdot,\Omega) ,\boldsymbol{\mathscr{S}}^m_{n}(\cdot,\cdot,\Omega)$ behave like a polynomial in $\Omega$ when $|\Omega|\rightarrow+\infty$. As a consequence,
\begin{align*}
\int_{\mathcal{C}^{\pm}_{\rho}}\boldsymbol{\Xi}^{m}_{n}(\tilde{\boldsymbol x},\Omega)e^{-\mathrm{i}\Omega t}\mathrm{d}\Omega=&\boldsymbol{\Phi}^m_{1,n}+\boldsymbol{\Phi}^m_{2,n}+\boldsymbol{\Phi}^m_{3,n}+\boldsymbol{\Phi}^m_{4,n},
\end{align*}
where
\begin{align*}
\boldsymbol{\Phi}^m_{1,n}&=\int_{\mathcal{C}^{\pm}_{\rho}}f(\Omega)\int_{\partial D\times\partial D}\boldsymbol{\mathscr{P}}^m_{n}(\tilde{\mathbf{v}},\tilde{\boldsymbol y},\Omega)e^{\mathrm{i}\Omega
(\frac{c^{-1}_b\tau|\boldsymbol z-\mathbf{s}|}{c_s}+\frac{c^{-1}_b\tau|\tilde{\boldsymbol x}-\tilde{\boldsymbol y}|}{c_s}-t)}\mathrm{d}\sigma(\tilde{\mathbf{v}})\mathrm{d}\sigma(\tilde{\boldsymbol y})\mathrm{d}\Omega,\\
\boldsymbol{\Phi}^m_{2,n}&=\int_{\mathcal{C}^{\pm}_{\rho}}f(\Omega)\int_{\partial D\times\partial D}\boldsymbol{\mathscr{Q}}^m_{n}(\tilde{\mathbf{v}},\tilde{\boldsymbol y},\Omega)e^{\mathrm{i}\Omega
(\frac{c^{-1}_b\tau|\boldsymbol z-\mathbf{s}|}{c_s}+\frac{c^{-1}_b\tau|\tilde{\boldsymbol x}-\tilde{\boldsymbol y}|}{c_p}-t)}\mathrm{d}\sigma(\tilde{\mathbf{v}})\mathrm{d}\sigma(\tilde{\boldsymbol y})\mathrm{d}\Omega,\\
\boldsymbol{\Phi}^m_{3,n}&=\int_{\mathcal{C}^{\pm}_{\rho}}f(\Omega)\int_{\partial D\times\partial D}\boldsymbol{\mathscr{R}}^m_{n}(\tilde{\mathbf{v}},\tilde{\boldsymbol y},\Omega)e^{\mathrm{i}\Omega
(\frac{c^{-1}_b\tau|\boldsymbol z-\mathbf{s}|}{c_p}+\frac{c^{-1}_b\tau|\tilde{\boldsymbol x}-\tilde{\boldsymbol y}|}{c_s}-t)}\mathrm{d}\sigma(\tilde{\mathbf{v}})\mathrm{d}\sigma(\tilde{\boldsymbol y})\mathrm{d}\Omega,\\
\boldsymbol{\Phi}^m_{4,n}&=\int_{\mathcal{C}^{\pm}_{\rho}}f(\Omega)\int_{\partial D\times\partial D}\boldsymbol{\mathscr{S}}^m_{n}(\tilde{\mathbf{v}},\tilde{\boldsymbol y},\Omega)e^{\mathrm{i}\Omega
(\frac{c^{-1}_b\tau|\boldsymbol z-\mathbf{s}|}{c_p}+\frac{c^{-1}_b\tau|\tilde{\boldsymbol x}-\tilde{\boldsymbol y}|}{c_p}-t)}\mathrm{d}\sigma(\tilde{\mathbf{v}})\mathrm{d}\sigma(\tilde{\boldsymbol y})\mathrm{d}\Omega.
\end{align*}

Furthermore, since $\hat{f}:t\mapsto \hat{f}(t)\in C^{\infty}_0([0,C_1])$, by the Paley-Wiener theorem, we have the decay property of its Fourier transform at infinity. For any integer $M>0$, there exists some constant $C_{M}>0$ such that for all $\Omega\in\mathbb{C}$,
\begin{align*}
|f(\Omega)|\leq C_{M}(1+|\Omega|)^{-M}e^{C_1|\Im(\Omega)|}.
\end{align*}

The remainder of the discussion is divided into the following two cases.

Case {\rm(i)}: $t<t^{-}_0$. The upper-half integration contour $\mathcal{C}^+$ is considered. By the polar coordinate transform
\begin{align*}
\Omega=\rho e^{\mathrm{i}\theta},\quad \theta\in[0,\pi],
\end{align*}
one has
\begin{align*}
\boldsymbol\Phi^m_{1,n}&=\int_{0}^{\pi}\mathrm{i}\rho e^{\mathrm{i}\theta}f(\rho e^{\mathrm{i}\theta})\int_{\partial D\times\partial D}\boldsymbol{\mathscr{P}}^m_{n}(\tilde{\mathbf{v}},\tilde{\boldsymbol y},\rho e^{\mathrm{i}\theta})e^{\mathrm{i}\rho e^{\mathrm{i}\theta}
(\frac{c^{-1}_b\tau|\boldsymbol z-\mathbf{s}|}{c_s}+\frac{c^{-1}_b\tau|\tilde{\boldsymbol x}-\tilde{\boldsymbol y}|}{c_s}-t)}\mathrm{d}\sigma(\tilde{\mathbf{v}})\mathrm{d}\sigma(\tilde{\boldsymbol y})\mathrm{d}\theta,\\
\boldsymbol\Phi^m_{2,n}&=\int_{0}^{\pi}\mathrm{i}\rho e^{\mathrm{i}\theta}f(\rho e^{\mathrm{i}\theta})\int_{\partial D\times\partial D}\boldsymbol{\mathscr{Q}}^m_{n}(\tilde{\mathbf{v}},\tilde{\boldsymbol y},\rho e^{\mathrm{i}\theta})e^{\mathrm{i}\rho e^{\mathrm{i}\theta}
(\frac{c^{-1}_b\tau|\boldsymbol z-\mathbf{s}|}{c_s}+\frac{c^{-1}_b\tau|\tilde{\boldsymbol x}-\tilde{\boldsymbol y}|}{c_p}-t)}\mathrm{d}\sigma(\tilde{\mathbf{v}})\mathrm{d}\sigma(\tilde{\boldsymbol y})\mathrm{d}\theta,\\
\boldsymbol\Phi^m_{3,n}&=\int_{0}^{\pi}\mathrm{i}\rho e^{\mathrm{i}\theta}f(\rho e^{\mathrm{i}\theta})\int_{\partial D\times\partial D}\boldsymbol{\mathscr{R}}^m_{n}(\tilde{\mathbf{v}},\tilde{\boldsymbol y},\rho e^{\mathrm{i}\theta})e^{\mathrm{i}\rho e^{\mathrm{i}\theta}
(\frac{c^{-1}_b\tau|\boldsymbol z-\mathbf{s}|}{c_p}+\frac{c^{-1}_b\tau|\tilde{\boldsymbol x}-\tilde{\boldsymbol y}|}{c_s}-t)}\mathrm{d}\sigma(\tilde{\mathbf{v}})\mathrm{d}\sigma(\tilde{\boldsymbol y})\mathrm{d}\theta,\\
\boldsymbol\Phi^m_{4,n}&=\int_{0}^{\pi}\mathrm{i}\rho e^{\mathrm{i}\theta}f(\rho e^{\mathrm{i}\theta})\int_{\partial D\times\partial D}\boldsymbol{\mathscr{S}}^m_{n}(\tilde{\mathbf{v}},\tilde{\boldsymbol y},\rho e^{\mathrm{i}\theta})e^{\mathrm{i}\rho e^{\mathrm{i}\theta}
(\frac{c^{-1}_b\tau|\boldsymbol z-\mathbf{s}|}{c_p}+\frac{c^{-1}_b\tau|\tilde{\boldsymbol x}-\tilde{\boldsymbol y}|}{c_p}-t)}\mathrm{d}\sigma(\tilde{\mathbf{v}})\mathrm{d}\sigma(\tilde{\boldsymbol y})\mathrm{d}\theta.
\end{align*}

Let us  estimate $\boldsymbol\Phi^m_{1,n}$. If $t<\frac{c^{-1}_b\tau|\boldsymbol z-\mathbf{s}|}{c_s}+\frac{c^{-1}_b\tau|\tilde{\boldsymbol x}-\boldsymbol z|}{c_s}-\frac{c^{-1}_b\tau\epsilon}{c_s}-C_1$, then for all $\tilde{\boldsymbol y}\in\partial D$,
\begin{align*}
|e^{\mathrm{i}\rho e^{\mathrm{i}\theta}
(\frac{c^{-1}_b\tau|\boldsymbol z-\mathbf{s}|}{c_s}+\frac{c^{-1}_b\tau|\tilde{\boldsymbol x}-\tilde{\boldsymbol y}|}{c_s}-C_1-t)}|\leq e^{-\rho\sin \theta(\frac{c^{-1}_b\tau|\boldsymbol z-\mathbf{s}|}{c_s}+\frac{c^{-1}_b\tau|\tilde{\boldsymbol x}-\boldsymbol z|}{c_s}-\frac{c^{-1}_b\tau\epsilon}{c_s}-C_1-t)}.
\end{align*}
Observe that $\sin\theta\geq\frac{2}{\pi}\theta,\forall \theta\in[0,\frac{\pi}{2}].$ Hence, for $t<\frac{c^{-1}_b\tau|\boldsymbol z-\mathbf{s}|}{c_s}+\frac{c^{-1}_b\tau|\tilde{\boldsymbol x}-\boldsymbol z|}{c_s}-\frac{c^{-1}_b\tau\epsilon}{c_s}-C_1$,
\begin{align*}
e^{-\rho\sin \theta(\frac{c^{-1}_b\tau|\boldsymbol z-\mathbf{s}|}{c_s}+\frac{c^{-1}_b\tau|\tilde{\boldsymbol x}-\boldsymbol z|}{c_s}-\frac{c^{-1}_b\tau\epsilon}{c_s}-C_1-t)}\leq e^{-\frac{2\rho}{\pi}\theta(\frac{c^{-1}_b\tau|\boldsymbol z-\mathbf{s}|}{c_s}+\frac{c^{-1}_b\tau|\tilde{\boldsymbol x}-\boldsymbol z|}{c_s}-\frac{c^{-1}_b\tau\epsilon}{c_s}-C_1-t)}.
\end{align*}
Consequently,
\begin{align*}
\int^{\frac{\pi}{2}}_0e^{-\rho\sin \theta(\frac{c^{-1}_b\tau|\boldsymbol z-\mathbf{s}|}{c_s}+\frac{c^{-1}_b\tau|\boldsymbol x-\boldsymbol z|}{c_s}-\frac{c^{-1}_b\tau\epsilon}{c_s}-C_1-t)}\mathrm{d}\theta\leq & \int^{\frac{\pi}{2}}_0e^{-\frac{2\rho}{\pi}\theta(\frac{c^{-1}_b\tau|\boldsymbol z-\mathbf{s}|}{c_s}+\frac{c^{-1}_b\tau|\boldsymbol x-\boldsymbol z|}{c_s}-\frac{c^{-1}_b\tau\epsilon}{c_s}-C_1-t)}\mathrm{d}\theta\\
=&\frac{\pi\big(1-e^{-\rho(\frac{c^{-1}_b\tau|\boldsymbol z-\mathbf{s}|}{c_s}+\frac{c^{-1}_b\tau|\tilde{\boldsymbol x}-\boldsymbol z|}{c_s}-\frac{c^{-1}_b\tau\epsilon}{c_s}-C_1-t)}\big)}{2\rho\big(\frac{c^{-1}_b\tau|\boldsymbol z-\mathbf{s}|}{c_s}+\frac{c^{-1}_b\tau|\tilde{\boldsymbol x}-\boldsymbol z|}{c_s}-\frac{c^{-1}_b\tau\epsilon}{c_s}-C_1-t\big)}.
\end{align*}
As a result,
\begin{align*}
|\boldsymbol\Phi^m_{1,n}|
\leq&C_M\epsilon^3\rho(1+\rho)^{-M}\max_{\theta\in[0,\pi]}\|\boldsymbol{\mathscr{P}}^m_{n}(\cdot,\cdot,\rho  e^{\mathrm{i}\theta})\|_{L^{\infty}(\partial D\times \partial D)}\\
&\times\int^{\pi}_{0}e^{-\rho\sin \theta(\frac{c^{-1}_b\tau|\boldsymbol z-\mathbf{s}|}{c_s}+\frac{c^{-1}_b\tau|\tilde{\boldsymbol x}-\boldsymbol z|}{c_s}-\frac{c^{-1}_b\tau\epsilon}{c_s}-C_1-t)}\mathrm{d}\theta\\
\leq&2 C_M\epsilon^3\rho(1+\rho)^{-M}\max_{\theta\in[0,\pi]}\|\boldsymbol{\mathscr{P}}^m_{n}(\cdot,\cdot,\rho  e^{\mathrm{i}\theta})\|_{L^{\infty}(\partial D\times \partial D)}\\
&\times\int^{\frac{\pi}{2}}_{0}e^{-\rho\sin \theta(\frac{c^{-1}_b\tau|\boldsymbol z-\mathbf{s}|}{c_s}+\frac{c^{-1}_b\tau|\tilde{\boldsymbol x}-\boldsymbol z|}{c_s}-\frac{c^{-1}_b\tau\epsilon}{c_s}-C_1-t)}\mathrm{d}\theta\\
\leq& C_M\epsilon^3(1+\rho)^{-M}\max_{\theta\in[0,\pi]}\|\boldsymbol{\mathscr{P}}^m_{n}(\cdot,\cdot,\rho  e^{\mathrm{i}\theta})\|_{L^{\infty}(\partial D\times \partial D)}\\
&\times\frac{\pi\big(1-e^{-\rho(\frac{c^{-1}_b\tau|\boldsymbol z-\mathbf{s}|}{c_s}+\frac{c^{-1}_b\tau|\tilde{\boldsymbol x}-\boldsymbol z|}{c_s}-\frac{c^{-1}_b\tau\epsilon}{c_s}-C_1-t)}\big)}{\frac{c^{-1}_b\tau|\boldsymbol z-\mathbf{s}|}{c_s}+\frac{c^{-1}_b\tau|\tilde{\boldsymbol x}-\boldsymbol z|}{c_s}-\frac{c^{-1}_b\tau\epsilon}{c_s}-C_1-t}.
\end{align*}
Therefore,
\begin{align*}
|\boldsymbol\Phi^m_{1,n}|=\mathcal{O}\big(\frac{\epsilon^3\rho^{-M}}{\frac{c^{-1}_b\tau|\boldsymbol z-\mathbf{s}|}{c_s}+\frac{c^{-1}_b\tau|\tilde{\boldsymbol x}-\boldsymbol z|}{c_s}-\frac{c^{-1}_b\tau\epsilon}{c_s}-C_1-t}\big).
\end{align*}
Proceeding the similar technique as above yields the following facts:

{\rm (i)} If $t<\frac{c^{-1}_b\tau|\boldsymbol z-\mathbf{s}|}{c_s}+\frac{c^{-1}_b\tau|\tilde{\boldsymbol x}-\boldsymbol z|}{c_p}-\frac{c^{-1}_b\tau\epsilon}{c_p}-C_1$, then
\begin{align*}
|\boldsymbol\Phi^m_{2,n}|=\mathcal{O}\big(\frac{\epsilon^3\rho^{-M}}{\frac{c^{-1}_b\tau|\boldsymbol z-\mathbf{s}|}{c_s}+\frac{c^{-1}_b\tau|\tilde{\boldsymbol x}-\boldsymbol z|}{c_p}-\frac{c^{-1}_b\tau\epsilon}{c_p}-C_1-t}\big).
\end{align*}

{\rm (ii)} If $t<\frac{c^{-1}_b\tau|\boldsymbol z-\mathbf{s}|}{c_p}+\frac{c^{-1}_b\tau|\tilde{\boldsymbol x}-\boldsymbol z|}{c_s}-\frac{c^{-1}_b\tau\epsilon}{c_s}-C_1$, then
\begin{align*}
&|\boldsymbol\Phi^m_{3,n}|=\mathcal{O}\big(\frac{\epsilon^3\rho^{-M}}{\frac{c^{-1}_b\tau|\boldsymbol z-\mathbf{s}|}{c_p}+\frac{c^{-1}_b\tau|\tilde{\boldsymbol x}-\boldsymbol z|}{c_s}-\frac{c^{-1}_b\tau\epsilon}{c_s}-C_1-t}\big).
\end{align*}

{\rm (iii)} If $t<\frac{c^{-1}_b\tau|\boldsymbol z-\mathbf{s}|}{c_p}+\frac{c^{-1}_b\tau|\tilde{\boldsymbol x}-\boldsymbol z|}{c_p}-\frac{c^{-1}_b\tau\epsilon}{c_p}-C_1$, then
\begin{align*}
|\boldsymbol\Phi^m_{4,n}|=\mathcal{O}\big(\frac{\epsilon^3\rho^{-M}}{\frac{c^{-1}_b\tau|\boldsymbol z-\mathbf{s}|}{c_p}+\frac{c^{-1}_b\tau|\tilde{\boldsymbol x}-\boldsymbol z|}{c_p}-\frac{c^{-1}_b\tau\epsilon}{c_p}-C_1-t}\big).
\end{align*}
Since $c_p\geq c_s$ by \eqref{relation}, formula \eqref{E:scattered} holds for all $t\leq t^{-}_0$.

Case {\rm(ii)}: $t>t^{+}_0$. Let us study the lower-half integration contour $\mathcal{C}^-$. Using the polar coordinate transform
\begin{align*}
\Omega=\rho e^{\mathrm{i}\theta},\quad \theta\in[\pi,2\pi]
\end{align*}
yields that
\begin{align*}
\boldsymbol\Phi^m_{1,n}=&\int_{\pi}^{2\pi}\mathrm{i}\rho e^{\mathrm{i}\theta}f(\rho e^{\mathrm{i}\theta})\int_{\partial D\times\partial D}\boldsymbol{\mathscr{P}}^m_{n}(\tilde{\mathbf{v}},\tilde{\boldsymbol y},\rho e^{\mathrm{i}\theta})e^{\mathrm{i}\rho e^{\mathrm{i}\theta}(\frac{c^{-1}_b\tau|\boldsymbol z-\mathbf{s}|}{c_s}+\frac{c^{-1}_b\tau|\tilde{\boldsymbol x}-\tilde{\boldsymbol y}|}{c_s}-t)}\mathrm{d}\sigma(\tilde{\mathbf{v}})\mathrm{d}\sigma(\tilde{\boldsymbol y})\mathrm{d}\theta,\\
\boldsymbol\Phi^m_{2,n}=&\int_{\pi}^{2\pi}\mathrm{i}\rho e^{\mathrm{i}\theta}f(\rho e^{\mathrm{i}\theta})\int_{\partial D\times\partial D}\boldsymbol{\mathscr{Q}}^m_{n}(\tilde{\mathbf{v}},\tilde{\boldsymbol y},\rho e^{\mathrm{i}\theta})e^{\mathrm{i}\rho e^{\mathrm{i}\theta}
(\frac{c^{-1}_b\tau|\boldsymbol z-\mathbf{s}|}{c_s}+\frac{c^{-1}_b\tau|\tilde{\boldsymbol x}-\tilde{\boldsymbol y}|}{c_p}-t)}\mathrm{d}\sigma(\tilde{\mathbf{v}})\mathrm{d}\sigma(\tilde{\boldsymbol y})\mathrm{d}\theta,\\
\boldsymbol\Phi^m_{3,n}=&\int_{\pi}^{2\pi}\mathrm{i}\rho e^{\mathrm{i}\theta}f(\rho e^{\mathrm{i}\theta})\int_{\partial D\times\partial D}\boldsymbol{\mathscr{R}}^m_{n}(\tilde{\mathbf{v}},\tilde{\boldsymbol y},\rho e^{\mathrm{i}\theta})e^{\mathrm{i}\rho e^{\mathrm{i}\theta}
(\frac{c^{-1}_b\tau|\boldsymbol z-\mathbf{s}|}{c_p}+\frac{c^{-1}_b\tau|\tilde{\boldsymbol x}-\tilde{\boldsymbol y}|}{c_s}-t)}\mathrm{d}\sigma(\tilde{\mathbf{v}})\mathrm{d}\sigma(\tilde{\boldsymbol y})\mathrm{d}\theta,\\
\boldsymbol\Phi^m_{4,n}=&\int_{\pi}^{2\pi}\mathrm{i}\rho e^{\mathrm{i}\theta}f(\rho e^{\mathrm{i}\theta})\int_{\partial D\times\partial D}\boldsymbol{\mathscr{S}}^m_{n}(\tilde{\mathbf{v}},\tilde{\boldsymbol y},\rho e^{\mathrm{i}\theta})e^{\mathrm{i}\rho e^{\mathrm{i}\theta}
(\frac{c^{-1}_b\tau|\boldsymbol z-\mathbf{s}|}{c_p}+\frac{c^{-1}_b\tau|\tilde{\boldsymbol x}-\tilde{\boldsymbol y}|}{c_p}-t)}\mathrm{d}\sigma(\tilde{\mathbf{v}})\mathrm{d}\sigma(\tilde{\boldsymbol y})\mathrm{d}\theta.
\end{align*}

Let us give the upper bound of $|\boldsymbol\Phi^m_{1,n}|$. If  $t>\frac{c^{-1}_b\tau|\boldsymbol z-\mathbf{s}|}{c_s}+\frac{c^{-1}_b\tau|\tilde{\boldsymbol x}-\boldsymbol z|}{c_s}+\frac{c^{-1}_b\tau\epsilon}{c_s}+C_1$, then for all $\tilde{\boldsymbol y}\in\partial D$,
\begin{align*}
|e^{\mathrm{i}\rho e^{\mathrm{i}\theta}
(\frac{c^{-1}_b\tau|\boldsymbol z-\mathbf{s}|}{c_s}+\frac{c^{-1}_b\tau|\tilde{\boldsymbol x}-\tilde{\boldsymbol y}|}{c_s}+C_1-t)}|
\leq &e^{\rho\sin \theta(t-(\frac{c^{-1}_b\tau|\boldsymbol z-\mathbf{s}|}{c_s}+\frac{c^{-1}_b\tau|\tilde{\boldsymbol x}-\tilde{\boldsymbol y}|}{c_s}+C_1))} \\
\leq& e^{\rho\sin \theta(t-(\frac{c^{-1}_b\tau|\boldsymbol z-\mathbf{s}|}{c_s}+\frac{c^{-1}_b\tau|\tilde{\boldsymbol x}-\boldsymbol z|}{c_s}+\frac{c^{-1}_b\tau\epsilon}{c_s}+C_1))}.
\end{align*}
Moreover, it is straightforward to see that $-\frac{2}{\pi}\theta+2\geq\sin\theta, \forall \theta\in[\pi,\frac{3\pi}{2}].$ One shows that for $t>\frac{c^{-1}_b\tau|\boldsymbol z-\mathbf{s}|}{c_s}+\frac{c^{-1}_b\tau|\tilde{\boldsymbol x}-\boldsymbol z|}{c_s}+\frac{c^{-1}_b\tau\epsilon}{c_s}+C_1$,
\begin{align*}
e^{\rho\sin \theta(t-(\frac{c^{-1}_b\tau|\boldsymbol z-\mathbf{s}|}{c_s}+\frac{c^{-1}_b\tau|\tilde{\boldsymbol x}-\boldsymbol z|}{c_s}+\frac{c^{-1}_b\tau\epsilon}{c_s}+C_1))}\leq e^{\rho(-\frac{2}{\pi}\theta+2)(t-(\frac{c^{-1}_b\tau|\boldsymbol z-\mathbf{s}|}{c_s}+\frac{c^{-1}_b\tau|\tilde{\boldsymbol x}-\boldsymbol z|}{c_s}+\frac{c^{-1}_b\tau\epsilon}{c_s}+C_1))},
\end{align*}
which leads to
\begin{align*}
\int^{\frac{3\pi}{2}}_{\pi}e^{\rho\sin \theta(t-(\frac{c^{-1}_b\tau|\boldsymbol z-\mathbf{s}|}{c_s}+\frac{c^{-1}_b\tau|\tilde{\boldsymbol x}-\boldsymbol z|}{c_s}+\frac{c^{-1}_b\tau\epsilon}{c_s}+C_1))}\mathrm{d}\theta\leq & \int^{\frac{3\pi}{2}}_{\pi}e^{\rho(-\frac{2}{\pi}\theta+2)(t-(\frac{c^{-1}_b\tau|\boldsymbol z-\mathbf{s}|}{c_s}+\frac{c^{-1}_b\tau|\tilde{\boldsymbol x}-\boldsymbol z|}{c_s}+\frac{c^{-1}_b\tau\epsilon}{c_s}+C_1))}\mathrm{d}\theta\\
=&\frac{\pi\big(1-e^{-\rho(t-(\frac{c^{-1}_b\tau|\boldsymbol z-\mathbf{s}|}{c_s}+\frac{c^{-1}_b\tau|\tilde{\boldsymbol x}-\boldsymbol z|}{c_s}+\frac{c^{-1}_b\tau\epsilon}{c_s}+C_1))}\big)}{2\rho\big(t-(\frac{c^{-1}_b\tau|\boldsymbol z-\mathbf{s}|}{c_s}+\frac{c^{-1}_b\tau|\tilde{\boldsymbol x}-\boldsymbol z|}{c_s}+\frac{c^{-1}_b\tau\epsilon}{c_s}+C_1)\big)}.
\end{align*}
We can derive that
\begin{align*}
|\boldsymbol\Phi^m_{1,n}|
\leq&C_M\epsilon^3\rho(1+\rho)^{-M}\max_{\theta\in[\pi,2\pi]}\|\boldsymbol{\mathscr{P}}^m_{n}(\cdot,\cdot,\rho  e^{\mathrm{i}\theta})\|_{L^{\infty}(\partial D\times \partial D)}\\
&\times\int^{2\pi}_{\pi}e^{\rho\sin \theta(t-(\frac{c^{-1}_b\tau|\boldsymbol z-\mathbf{s}|}{c_s}+\frac{c^{-1}_b\tau|\tilde{\boldsymbol x}-\boldsymbol z|}{c_s}+\frac{c^{-1}_b\tau\epsilon}{c_s}+C_1))}\mathrm{d}\theta\\
\leq&2 C_M\epsilon^3\rho(1+\rho)^{-M}\max_{\theta\in[\pi,2\pi]}\|\boldsymbol{\mathscr{P}}^m_{n}(\cdot,\cdot,\rho  e^{\mathrm{i}\theta})\|_{L^{\infty}(\partial D\times \partial D)}\\
&\times\int^{\frac{3\pi}{2}}_{\pi}e^{\rho\sin \theta(t-(\frac{c^{-1}_b\tau|\boldsymbol z-\mathbf{s}|}{c_s}+\frac{c^{-1}_b\tau|\tilde{\boldsymbol x}-\boldsymbol z|}{c_s}+\frac{c^{-1}_b\tau\epsilon}{c_s}+C_1))}\mathrm{d}\theta\\
\leq& C_M\epsilon^3(1+\rho)^{-M}\max_{\theta\in[\pi,2\pi]}\|\boldsymbol{\mathscr{P}}^m_{n}(\cdot,\cdot,\rho  e^{\mathrm{i}\theta})\|_{L^{\infty}(\partial D\times \partial D)}\\
&\times\frac{\pi\big(1-e^{-\rho(t-(\frac{c^{-1}_b\tau|\boldsymbol z-\mathbf{s}|}{c_s}+\frac{c^{-1}_b\tau|\tilde{\boldsymbol x}-\boldsymbol z|}{c_s}+\frac{c^{-1}_b\tau\epsilon}{c_s}+C_1))}\big)}{t-(\frac{c^{-1}_b\tau|\boldsymbol z-\mathbf{s}|}{c_s}+\frac{c^{-1}_b\tau|\tilde{\boldsymbol x}-\boldsymbol z|}{c_s}+\frac{c^{-1}_b\tau\epsilon}{c_s}+C_1)},
\end{align*}
which then gives
\begin{align*}
|\boldsymbol\Phi^m_{1,n}|=\mathcal{O}\big(\frac{\epsilon^3\rho^{-M}}{t-(\frac{c^{-1}_b\tau|\boldsymbol z-\mathbf{s}|}{c_s}+\frac{c^{-1}_b\tau|\tilde{\boldsymbol x}-\boldsymbol z|}{c_s}+\frac{c^{-1}_b\tau\epsilon}{c_s}+C_1)}\big).
\end{align*}
By the similar argument as above, the following facts hold:

{\rm (i)} If $t>\frac{c^{-1}_b\tau|\boldsymbol z-\mathbf{s}|}{c_s}+\frac{c^{-1}_b\tau|\tilde{\boldsymbol x}-\boldsymbol z|}{c_p}+\frac{c^{-1}_b\tau\epsilon}{c_p}+C_1$, then
\begin{align*}
|\boldsymbol\Phi^m_{2,n}|=\mathcal{O}\big(\frac{\epsilon^3\rho^{-M}}{t-(\frac{c^{-1}_b\tau|\boldsymbol z-\mathbf{s}|}{c_s}+\frac{c^{-1}_b\tau|\tilde{\boldsymbol x}-\boldsymbol z|}{c_p}+\frac{c^{-1}_b\tau\epsilon}{c_p}+C_1)}\big).
\end{align*}

{\rm (ii)} If $t>\frac{c^{-1}_b\tau|\boldsymbol z-\mathbf{s}|}{c_p}+\frac{c^{-1}_b\tau|\tilde{\boldsymbol x}-\boldsymbol z|}{c_s}+\frac{c^{-1}_b\tau\epsilon}{c_s}+C_1$, then
\begin{align*}
|\boldsymbol\Phi^m_{3,n}|=\mathcal{O}\big(\frac{\epsilon^3\rho^{-M}}{t-(\frac{c^{-1}_b\tau|\boldsymbol z-\mathbf{s}|}{c_p}+\frac{c^{-1}_b\tau|\tilde{\boldsymbol x}-\boldsymbol z|}{c_s}+\frac{c^{-1}_b\tau\epsilon}{c_s}+C_1)}\big).
\end{align*}

{\rm (iii)} If $t>\frac{c^{-1}_b\tau|\boldsymbol z-\mathbf{s}|}{c_p}+\frac{c^{-1}_b\tau|\tilde{\boldsymbol x}-\boldsymbol z|}{c_p}+\frac{c^{-1}_b\tau\epsilon}{c_p}+C_1$, then
\begin{align*}
|\boldsymbol\Phi^m_{4,n}|=\mathcal{O}\big(\frac{\epsilon^3\rho^{-M}}{t-(\frac{c^{-1}_b\tau|\boldsymbol z-\mathbf{s}|}{c_p}+\frac{c^{-1}_b\tau|\tilde{\boldsymbol x}-\boldsymbol z|}{c_p}+\frac{c^{-1}_b\tau\epsilon}{c_p}+C_1)}\big).
\end{align*}
Hence formula \eqref{EE:scattered} holds for all $t\geq t^{+}_0$.

%A similar procedure shows that \eqref{E:scattered1}--\eqref{E:scattered2} hold.

The proof of the theorem is now complete.
\end{proof}

\appendix

\section{Proofs of some lemmas}\label{App:A}

In the Appendix \ref{App:A}, we will supplement the proof of Lemma \ref{le:NUS}, \ref{le:NUK} and \ref{le:resonance} for completeness.

\subsection{Proof of Lemma \ref{le:NUS}}\label{App:A.1}
\begin{proof}
From the expressions of $\mathcal{I}^{m}_{n}$ and $\mathcal{N}^{m}_{n}$, it follows that
\begin{align*}
\mathcal{I}^{m}_{n-1}&=\nabla_{\partial B} Y^m_{n}+nY^m_n\boldsymbol \nu,\\
\mathcal{N}^{m}_{n+1}&=-\nabla_{\partial B} Y^m_{n}+(n+1)Y^m_n\boldsymbol \nu.
\end{align*}
In view of \eqref{E:SI} and \eqref{E:SN}, we have
\begin{align*}
&\mathbf{S}^{k}_{\partial B}[\nabla_{\partial B} Y^m_{n}+nY^m_n\boldsymbol \nu])\\
&=c_{1n}(k)(\nabla_{\partial B} Y^m_{n}+nY^m_n\boldsymbol \nu)+d_{1n}(k)(-\nabla_{\partial B} Y^m_{n}+(n+1)Y^m_n\boldsymbol \nu),\\
&\mathbf{S}^{k}_{\partial B}[-\nabla_{\partial B} Y^m_{n}+(n+1)Y^m_n\boldsymbol \nu]\\
&=c_{2n}(k)(\nabla_{\partial B} Y^m_{n}+nY^m_n\boldsymbol \nu)+d_{2n}(k)(-\nabla_{\partial B} Y^m_{n}+(n+1)Y^m_n\boldsymbol \nu).
\end{align*}
This leads to
\begin{align*}
\boldsymbol\nu \cdot\mathbf{S}^{k}_{\partial B}[\nabla_{\partial B} Y^m_{n}+nY^m_n\boldsymbol \nu]&=nc_{1n}(k)Y^m_n+(n+1)d_{1n}(k)Y^m_n,\\
\boldsymbol\nu \cdot\mathbf{S}^{k}_{\partial B}[-\nabla_{\partial B} Y^m_{n}+(n+1)Y^m_n\boldsymbol \nu]&=nc_{2n}(k)Y^m_n+(n+1)d_{2n}(k)Y^m_n.
\end{align*}
As a result,
\begin{align*}
\boldsymbol\nu \cdot\mathbf{S}^{k}_{\partial B}[Y^m_n\boldsymbol \nu]=\frac{n(c_{1n}(k)+c_{2n}(k))+(n+1)(d_{1n}(k)+d_{2n}(k))}{2n+1} Y^m_n,
\end{align*}
which then gives \eqref{E:NUS} and \eqref{G:etan}.

The remainder of the proof is to check \eqref{S:eta0} and \eqref{S:eta}. Using \eqref{E:bessel} and \eqref{E:hankel} shows that
\begin{align*}
\eta_0(k)&=\frac{n(c_{1n}(k)+c_{2n}(k))+(n+1)(d_{1n}(k)+d_{2n}(k))}{2n+1}\big|_{n=0}\\
&=-\mathrm{i}\frac{j_1(k_p)h_1(k_p)k_p}{\lambda+2\mu}\nonumber\\
&=-\frac{1}{3(\lambda+2\mu)}-\frac{2}{15(\lambda+2\mu)}k^2_p+\mathcal{O}(k^3).
\end{align*}
Hence \eqref{S:eta0} holds.

Moreover, by \eqref{E:recu1} and \eqref{E:recu2}, we derive that for $n\geq1$,
\begin{align*}
\eta_{n}(k)=&-\frac{\mathrm{i}}{2n+1}\bigg(\frac{n(n+1)(j_{n-1}(k_s)+j_{n+1}(k_s))(h_{n-1}(k_s)+h_{n+1}(k_s))k_s}{\mu(2n+1)}\\
&-
\frac{n(n+1)(j_{n-1}(k_p)+j_{n+1}(k_p))(h_{n-1}(k_p)+h_{n+1}(k_p))k_p}{(\lambda+2\mu)(2n+1)}\\
&+\frac{n(2n+1)j_{n-1}(k_p)h_{n-1}(k_p)k_p}{(\lambda+2\mu)(2n+1)}+\frac{(n+1)(2n+1)j_{n+1}(k_p)h_{n+1}(k_p)k_p}{(\lambda+2\mu)(2n+1)}\bigg)\\
=&-\mathrm{i}\bigg(\frac{n(n+1)j_n(k_s)h_n(k_s)}{\mu k_s}-\frac{n(n+1)j_n(k_p)h_n(k_p)}{(\lambda+2\mu)k_p}\nonumber\\
&+\frac{n j_{n-1}(k_p)h_{n-1}(k_p)k_p }{(\lambda+2\mu)(2n+1)}+\frac{(n+1)j_{n+1}(k_p)h_{n+1}(k_p)k_p}{(\lambda+2\mu)(2n+1)}\bigg).
\end{align*}
Thanks to \eqref{E:bessel}--\eqref{E:hankel} and \eqref{E:wave-ps}, one has that for fixed $n\geq1$,
\begin{align*}
\eta_n(k)=&-\bigg(\frac{n(n+1)}{\mu(2n+1)}\bigg(\frac{1}{k^2_s}+\frac{2}{(2n+3)(2n-1)}+\mathcal{O}(k)\bigg)\\
&-\frac{n(n+1)}{(\lambda+2\mu)(2n+1)}\bigg(\frac{1}{k^2_p}+\frac{2}{(2n+3)(2n-1)}+\mathcal{O}(k)\bigg)\\
&+\frac{n}{(\lambda+2\mu)(2n+1)(2n-1)}(1+\mathcal{O}(k))+\frac{n+1}{(\lambda+2\mu)(2n+3)(2n+1)}(1+\mathcal{O}(k))\bigg)\\
=&-\bigg(\frac{2(\lambda+\mu)n(n+1)}{\mu(\lambda+2\mu)(2n+3)(2n+1)(2n-1)}+\frac{\mu n(2n+3)}{\mu(\lambda+2\mu)(2n+3)(2n+1)(2n-1)}\\
&+\frac{\mu(n+1)(2n-1)}{\mu(\lambda+2\mu)(2n+3)(2n+1)(2n-1)}+\mathcal{O}(k)\bigg)\\
=&-\frac{2(\lambda+\mu)n(n+1)+\mu(4n^4+4n-1)}{\mu(\lambda+2\mu)(2n+3)(2n+1)(2n-1)}+\mathcal{O}(k).
\end{align*}
The proof is complete.
\end{proof}

%-----------------------------------------------------------------------------------------------
%-----------------------------------------------------------------------------------------------

\subsection{Proof of Lemma \ref{le:NUK}}\label{App:A.2}

\begin{proof}
From \eqref{Jump2}, \eqref{E:PSI} and \eqref{E:PSN}, it is easy to verify that
\begin{align*}
\big(\frac{1}{2}\mathbf{I}+\mathbf{K}^{k,*}_{\partial B}\big)[\mathcal{I}^{m}_{n-1}]&=\mathfrak{c}_{1n}(k)\mathcal{I}^{m}_{n-1}+\mathfrak{d}_{1n}(k)\mathcal{N}^m_{n+1},\\
\big(\frac{1}{2}\mathbf{I}+\mathbf{K}^{k,*}_{\partial B}\big)[\mathcal{N}^{m}_{n+1}]&=\mathfrak{c}_{2n}(k)\mathcal{I}^{m}_{n-1}+\mathfrak{d}_{2n}(k)\mathcal{N}^m_{n+1}.
\end{align*}
Because of the expressions of $\mathcal{I}^{m}_{n-1}$ and $\mathcal{N}^{m}_{n+1}$, one arrives at
\begin{align*}
\boldsymbol\nu \cdot\big(\frac{1}{2}\mathbf{I}+\mathbf{K}^{k,*}_{\partial B}\big)[Y^m_n\boldsymbol \nu]=\frac{n(\mathfrak{c}_{1n}(k)+\mathfrak{c}_{2n}(k))+(n+1)(\mathfrak{d}_{1n}(k)+\mathfrak{d}_{2n}(k))}{2n+1}Y^m_n.
\end{align*}
Thus we get  \eqref{E:NUK} and \eqref{G:rhon}.

Moreover, observe that
\begin{align*}
\rho_0(k)&=\frac{n(\mathfrak{c}_{1n}(k)+\mathfrak{c}_{2n}(k))+(n+1)(\mathfrak{d}_{1n}(k)+\mathfrak{d}_{2n}(k))}{2n+1}\big|_{n=0}\\
&=\mathrm{i}\left(\frac{4j_1(k_p)h_1(k_p)k_p\mu}{\lambda+2\mu}-j_1(k_p)h_0(k_p)k_p^2\right)\\
&=\frac{4\mu}{3(\lambda+2\mu)}-\frac{5\lambda+2\mu}{15(\lambda+2\mu)}k^2_p+\mathcal{O}(k^3).
\end{align*}
As a result, \eqref{S:rho0} holds.

Finally, let us  check  \eqref{S:rho}. It follows from \eqref{E:recu1} and \eqref{E:recu2} that
\begin{align*}
\frac{n(\mathfrak{c}_{1n}(k)+\mathfrak{c}_{2n}(k))}{2n+1}\big|_{n=1}&=\frac{\mathrm{i}}{9}\left(4j_0(k_s)h_1(k_s)k^2_s-j_0(k_p)h_1(k_p)k^2_p\right)\\
&=\frac{1}{9}\left(3+\frac{4}{3}k^2_s-\frac{1}{3}k^2_p+\mathcal{O}(k^3)\right),
\end{align*}
and that for fixed $n\geq2$,
\begin{align*}
\frac{n(\mathfrak{c}_{1n}(k)+\mathfrak{c}_{2n}(k))}{2n+1}=&\frac{\mathrm{i}n}{2n+1}\bigg(-2 (n^2-1)j_{n}(k_s)h_{n-1}(k_s)-\frac{2\mu n(n-1)j_{n}(k_p)h_{n-1}(k_p)}{\lambda+2\mu}\\
&\qquad\qquad+\frac{2\mu(n-1)j_{n+1}(k_p)h_{n-1}(k_p)k_p}{\lambda+2\mu}+\frac{2(n+1)j_{n-1}(k_s)h_{n}(k_s)k^2_s}{2n+1}\\
&\qquad\qquad-\frac{j_{n-1}(k_p)h_{n}(k_p)k^2_p}{2n+1}\bigg)\\
=&\frac{n}{2n+1}\bigg(\frac{-2(n^2-1)}{(2n+1)(2n-1)}\bigg(1+\frac{3}{(2n+3)(2n-3)}k^2_s+\mathcal{O}(k^3)\bigg)\\
&+\frac{-2\mu n(n-1)}{(\lambda+2\mu)(2n+1)(2n-1)}\bigg(1+\frac{3}{(2n+3)(2n-3)}k^2_p+\mathcal{O}(k^3)\bigg)\\
&+\frac{2\mu(n-1)}{(\lambda+2\mu)(2n+3)(2n+1)(2n-1)}(k^2_p+\mathcal{O}(k^3))\\
&+\frac{2(n+1)}{2n+1}\bigg(1+\frac{1}{(2n+1)(2n-1)}k^2_s+\mathcal{O}(k^3)\bigg)\\
&-\frac{1}{2n+1}\bigg(1+\frac{1}{(2n+1)(2n-1)}k^2_p+\mathcal{O}(k^3)\bigg)\bigg)\\
=&\frac{(\lambda+\mu )n(2n^2+1)+\mu n(2n+1)}{(\lambda+2\mu)(2n+1)^2(2n-1)}+\frac{-2n(n+1)(2n^2-3n+6)}{(2n+3)(2n+1)^3(2n-1)(2n-3)}k^2_s\\
&+\frac{-2\mu n(n-1)(2n^2+7n+3)-(\lambda+2\mu)n(2n+3)(2n-3)}{(\lambda+2\mu)(2n+3)(2n+1)^3(2n-1)(2n-3)}k^2_p+\mathcal{O}(k^3).
\end{align*}
It remains to calculate
\begin{align*}
\frac{(n+1)(\mathfrak{d}_{1n}(k)+\mathfrak{d}_{2n}(k))}{2n+1},\quad n\geq1.
\end{align*}
We can derive that for fixed $n\in\mathbb{N}$ and $0<|z|\ll 1$,
\begin{align*}
j_n(z)&=\frac{z^n}{(2n+1)!!}\left(1-\frac{z^2}{2(2n+3)}+\frac{z^4}{8(2n+3)(2n+5)}+\mathcal{O}(z^5)\right),
\end{align*}
and that for fixed $n\in\mathbb{N}$ and $0<|z|\ll 1$,
\begin{align}
h_0(z)&=\frac{1}{\mathrm{i} z}\left(1+\mathrm{i}z-\frac{z^2}{2}-\frac{\mathrm{i}z^3}{6}+\frac{z^4}{24}+\mathcal{O}(z^5)\right),\label{E:hzero}\\
h_1(z)&=\frac{1}{\mathrm{i} z^2}\left(1+\frac{z^2}{2}+\frac{\mathrm{i}z^3}{3}-\frac{z^4}{8}+\mathcal{O}(z^5)\right),\nonumber\\
h_n(z)&=\frac{(2n-1)!!}{\mathrm{i}z^{n+1}}\left(1-\frac{z^2}{2(-2n+1)}+\frac{z^4}{8(-2n+1)(-2n+3)}+\mathcal{O}(z^5)\right),\quad n\geq2.\nonumber
\end{align}
Therefore, for fixed $n\geq1$,
\begin{align*}
&\frac{(n+1)(\mathfrak{d}_{1n}(k)+\mathfrak{d}_{2n}(k))}{2n+1}\\
=&\frac{\mathrm{i}(n+1)}{2n+1}\bigg(2n(n+2)j_n(k_s)h_{n+1}(k_s)-\frac{2\mu n(n+2)j_n(k_p)h_{n+1}(k_p)}{\lambda+2\mu}\\
&+\frac{2\mu(n+2)j_{n+1}(k_p)h_{n+1}(k_p)k_p}{\lambda+2\mu}-nj_{n}(k_s)h_n(k_s)k_s+nj_{n}(k_p)h_n(k_p)k_p-j_{n+1}(k_p)h_{n}(k_p)k^2_p\bigg)\\
=&\frac{(n+1)}{2n+1}\bigg(2n(n+2)\bigg(\frac{1}{k^2_s}+\frac{1}{(2n+3)(2n+1)}+\frac{3}{(2n+5)(2n+3)(2n+1)(2n-1)}k^2_s+\mathcal{O}(k^3)\bigg)\\
&-\frac{2\mu n(n+2)}{\lambda+2\mu}\bigg(\frac{1}{k^2_p}+\frac{1}{(2n+3)(2n+1)}+\frac{3}{(2n+5)(2n+3)(2n+1)(2n-1)}k^2_p+\mathcal{O}(k^3)\bigg)\\
&+\frac{2\mu (n+2)}{(\lambda+2\mu)(2n+3)}\bigg(1+\frac{2}{(2n+5)(2n+1)}k^2_p+\mathcal{O}(k^3)\bigg)\\
&-\frac{n}{2n+1}\bigg(1+\frac{2}{(2n+3)(2n-1)}k^2_s+\mathcal{O}(k^3)\bigg)+\frac{n}{2n+1}\bigg(1+\frac{2}{(2n+3)(2n-1)}k^2_p+\mathcal{O}(k^3)\bigg)\\
&-\frac{1}{(2n+3)(2n+1)}\bigg(k^2_p+\mathcal{O}(k^3)\bigg)\bigg)\\
=&\frac{2(\lambda+\mu) (n+2)(n+1)n+2\mu(2n+1)(n+2)(n+1)}{(\lambda+2\mu)(2n+3)(2n+1)^2}+\frac{2n(n+1)^2}{(2n+5)(2n+3)(2n+1)^2(2n-1)}k^2_s\\
&+\frac{2\mu(n+2)(n+1)(n-2)+(\lambda+2\mu)(2n+5)(n+1)}{(\lambda+2\mu)(2n+5)(2n+3)(2n+1)^2(2n-1)}k^2_p+\mathcal{O}(k^3).
\end{align*}
Hence we obtain \eqref{S:rho}.

The proof is complete.
\end{proof}

%-----------------------------------------------------------------------------------------------
%-----------------------------------------------------------------------------------------------

\subsection{Proof of Lemma \ref{le:resonance}}\label{App:A.3}

\begin{proof}
Observe that for $n=0$,
\begin{align*}
j'_0(k)=-\frac{k}{3}+\frac{k^3}{30}+\mathcal{O}(k^4).
\end{align*}
By \eqref{E:hzero}, it is straightforward to compute that $0<k\ll1$,
\begin{align*}
\zeta_0(k)=\frac{1}{2}+k^2(\frac{1}{3}+\frac{\mathrm{i}k}{3}-\frac{k^2}{5}+\mathcal{O}(k^3)),
\end{align*}
which is a higher order expansion than the form given by \eqref{E:zeta0}.
 Combining this with \eqref{E:KK}, \eqref{E:xi0}, \eqref{S:eta0} and \eqref{S:rho0} yields that
\begin{align*}
\lambda_{0}(k)=&(-\frac{1}{2}+\zeta_0(k_1))\xi^{-1}_0(k_1)\rho_0(k\tau)+\delta\tau^2k^2\eta_0(k\tau)\\
=&k^2(\frac{1}{3}c(\omega)+\frac{\mathrm{i}k}{3}(c(\omega))^{\frac32}-\frac{k^2}{5}(c(\omega))^2+\mathcal{O}(k^3(c(\omega))^{\frac52}))\\
&\times(-1-\mathrm{i}k(c(\omega))^{\frac{1}{2}}+\frac{2}{3}k^2c(\omega)+\mathcal{O}(k^3(c(\omega))^{\frac32}))^{-1}\\
&\times(\frac{4\mu}{3(\lambda+2\mu)}-\frac{5\lambda+2\mu}{15(\lambda+2\mu)}\frac{k^2\tau^2}{\lambda+2\mu}+\mathcal{O}(k^3\tau^3))\\
&+\delta\tau^2k^2(-\frac{1}{3(\lambda+2\mu)}-\frac{2}{15(\lambda+2\mu)}\frac{k^2\tau^2}{\lambda+2\mu}+\mathcal{O}(k^3\tau^3))\\
%=&k^2(-\frac{1}{3}c(\omega)-\frac{17}{90}k^2(c(\omega))^2+\mathcal{O}(k^3))(\frac{4\mu}{3(\lambda+2\mu)}-\frac{5\lambda+2\mu}{15(\lambda+2\mu)}\frac{k^2\tau^2}{\lambda+2\mu}+\mathcal{O}(k^3))\\
%&+\delta\tau^2k^2(-\frac{1}{3(\lambda+2\mu)}-\frac{2}{15(\lambda+2\mu)}\frac{k^2\tau^2}{\lambda+2\mu}+\mathcal{O}(k^3))\\
=&k^2\Big(-\frac{4\mu c(\omega)}{9(\lambda+2\mu)}-\frac{\delta\tau^2}{3(\lambda+2\mu)}+\frac{(5\lambda+2\mu)\tau^2 k^2 c(\omega)}{45(\lambda+2\mu)^2}-\frac{34\mu k^2 (c(\omega))^2}{135(\lambda+2\mu)}\\
&-\frac{2\delta \tau^4 k^2}{15(\lambda+2\mu)^2}+\mathcal{O}(k^3)\Big).
\end{align*}

On the other hand, by using \eqref{E:xin}, \eqref{E:zetan}, \eqref{S:eta}  and \eqref{S:rho}, $\lambda_n(k)$ with $1\leq n\leq N$ has  the asymptotic expansion as follows:
\begin{align*}
\lambda_{n}(k)=&(-\frac{1}{2}+\zeta_n(k_1))\xi^{-1}_n(k_1)\rho_n(k\tau)+\delta\tau^2k^2\eta_n(k\tau)\\
%=&(-\frac{1}{2}+\frac{1}{2(2n+1)}-\frac{1}{(2n+3)(2n+1)(2n-1)}k^2 c(\omega)+\mathcal{O}(k^3(c(\omega))^{\frac{3}{2}}))\\
%&\times(-\frac{1}{2n+1}-\frac{2}{(2n+3)(2n+1)(2n-1)}k^2c(\omega)+\mathcal{O}(k^3(c(\omega))^{\frac{3}{2}}))^{-1}\\
%&\times(\frac{(\lambda+\mu)n(8n^3+16n^2+4n-1)+\mu(2n+1)(4n^3+12n^2+5n-4)}{(\lambda+2\mu)(2n+3)(2n+1)^2(2n-1)}\nonumber\\
%&+\frac{-2n(n+1)(4n^2+4n+33)}{(2n+5)(2n+3)(2n+1)^3(2n-1)(2n-3)}\frac{k^2\tau^2}{\mu}\nonumber\\
%&+\frac{-2\mu(2n+1)(10n^3+15n^2-19n-12)+(\lambda+2\mu)(2n+5)(2n-3)}{(\lambda+2\mu)(2n+5)(2n+3)(2n+1)^3(2n-1)(2n-3)}\frac{k^2\tau^2}{\lambda+2\mu}\\
%&+\mathcal{O}(k^3\tau^3))
%+\delta\tau^2k^2(-\frac{2(\lambda+\mu)n(n+1)+\mu(4n^4+4n-1)}{\mu(\lambda+2\mu)(2n+3)(2n+1)(2n-1)}+\mathcal{O}(k\tau))\\
=&\frac{(\lambda+\mu)n^2(8n^3+16n^2+4n-1)+\mu n(2n+1)(4n^3+12n^2+5n-4)}{(\lambda+2\mu)(2n+3)(2n+1)^2(2n-1)}\\
&+\frac{-2n^2(n+1)(4n^2+4n+33)}{(2n+5)(2n+3)(2n+1)^3(2n-1)(2n-3)}\frac{k^2\tau^2}{\mu}\nonumber\\
&+\frac{-2\mu n(2n+1)(10n^3+15n^2-19n-12)+(\lambda+2\mu)n(2n+5)(2n-3)}{(\lambda+2\mu)(2n+5)(2n+3)(2n+1)^3(2n-1)(2n-3)}\frac{k^2\tau^2}{\lambda+2\mu}\\
&-\frac{(\lambda+\mu)n(8n^3+16n^2+4n-1)+\mu(2n+1)(4n^3+12n^2+5n-4)}{(\lambda+2\mu)(2n+3)^2(2n+1)^2(2n-1)}k^2c(\omega)\\
&-\delta\frac{2(\lambda+\mu)n(n+1)+\mu(4n^4+4n-1)}{(\lambda+2\mu)(2n+3)(2n+1)(2n-1)}\frac{k^2\tau^2}{\mu}
+\mathcal{O}(k^3).
\end{align*}
The proof is now complete.
\end{proof}

\section{Auxiliary results}\label{App:B}

In the Appendix \ref{App:B}, we will supplement the auxiliary lemma.

\begin{lemm}\label{le:gamma}
One has
\begin{align*}%\label{E:Gamma}
\boldsymbol\Gamma^{\tilde{k}\tau}(\tilde{\boldsymbol x},\mathbf{s}) =&-e^{\mathrm{i}\frac{\tilde{k}\tau}{c_s}|\tilde{\boldsymbol x}-\mathbf{s}|}\frac{\boldsymbol{\mathcal{A}}(\tilde{\boldsymbol x},\mathbf{s},\frac{\tilde{k}\tau}{c_s})}{4\pi(\tilde{k}\tau)^2|\tilde{\boldsymbol x}-\mathbf{s}|}
-e^{\mathrm{i}\frac{\tilde{k}\tau}{c_p}|\tilde{\boldsymbol x}-\mathbf{s}|}\frac{\boldsymbol{\mathcal{B}}(\tilde{\boldsymbol x},\mathbf{s},\frac{\tilde{k}\tau}{c_p})}{4\pi(\tilde{k}\tau)^2|\tilde{\boldsymbol x}-\mathbf{s}|},
\end{align*}
where
$\boldsymbol{\mathcal{A}}(\tilde{\boldsymbol x},\mathbf{s},\frac{\tilde{k}\tau}{c_s})=(\mathcal{A}_{ij})^3_{i,j=1},\boldsymbol{\mathcal{B}}(\tilde{\boldsymbol x},\mathbf{s},\frac{\tilde{k}\tau}{c_p})=(\mathcal{B}_{ij})^3_{i,j=1}$ are two $3\times3$ matrices with entries
%, and behave like a polynomial in $\tilde{k}$, with entries
\begin{align*}
\mathcal{A}_{ii}=&\frac{1}{|\tilde{\boldsymbol x}-\mathbf{s}|^4}\big(3(\tilde{x}_i-s_i)^2-3\mathrm{i}\frac{\tilde{k}\tau}{c_s}(\tilde{x}_i-s_i)^2|\tilde{\boldsymbol x}-\mathbf{s}|-(\frac{\tilde{k}\tau}{c_s})^2(\tilde{x}_i-s_i)^2|\tilde{\boldsymbol x}-\mathbf{s}|^2\\
&-|\tilde{\boldsymbol x}-\mathbf{s}|^2+\mathrm{i}\frac{\tilde{k}\tau}{c_s}|\tilde{\boldsymbol x}-\mathbf{s}|^3
+(\frac{\tilde{k}\tau}{c_s})^2|\tilde{\boldsymbol x}-\mathbf{s}|^4\big),\\
\mathcal{A}_{ij}=&\frac{(\tilde{x}_i-s_i)(\tilde{x}_j-s_j)}{|\tilde{\boldsymbol x}-\mathbf{s}|^4}\big(3-3\mathrm{i}\frac{\tilde{k}\tau}{c_s}|\tilde{\boldsymbol x}-\mathbf{s}|
-(\frac{\tilde{k}\tau}{c_s})^2|\tilde{\boldsymbol x}-\mathbf{s}|^2\big),i\neq j,
\end{align*}
and
\begin{align*}
\mathcal{B}_{ii}=&-\frac{1}{|\tilde{\boldsymbol x}-\mathbf{s}|^4}\big(3(\tilde{x}_i-s_i)^2-3\mathrm{i}\frac{\tilde{k}\tau}{c_p}(\tilde{x}_i-s_i)^2|\tilde{\boldsymbol x}-\mathbf{s}|-(\frac{\tilde{k}\tau}{c_p})^2(\tilde{x}_i-s_i)^2|\tilde{\boldsymbol x}-\mathbf{s}|^2\\
&-|\tilde{\boldsymbol x}-\mathbf{s}|^2+\mathrm{i}\frac{\tilde{k}\tau}{c_p}|\tilde{\boldsymbol x}-\mathbf{s}|^3\big),\\
\mathcal{B}_{ij}=&-\frac{(\tilde{x}_i-s_i)(\tilde{x}_j-s_j)}{|\tilde{\boldsymbol x}-\mathbf{s}|^4}\big(3-3\mathrm{i}\frac{\tilde{k}\tau}{c_p}|\tilde{\boldsymbol x}-\mathbf{s}|
-(\frac{\tilde{k}\tau}{c_p})^2|\tilde{\boldsymbol x}-\mathbf{s}|^2\big),i\neq j.
\end{align*}

Moreover, one has that for $\ell=1,2,3$,
\begin{align*}%\label{E:P-Gamma}
\partial_{\ell}\boldsymbol\Gamma^{\tilde{k}\tau}(\tilde{\boldsymbol x},\mathbf{s})=-e^{\mathrm{i}\frac{\tilde{k}\tau}{c_s}|\tilde{\boldsymbol x}-\mathbf{s}|}\frac{\boldsymbol{\mathscr{A}}^{(\ell)}(\tilde{\boldsymbol x},\mathbf{s},\frac{\tilde{k}\tau}{c_s})}{4\pi(\tilde{k}\tau)^2|\tilde{\boldsymbol x}-\mathbf{s}|}
-e^{\mathrm{i}\frac{\tilde{k}\tau}{c_p}|\tilde{\boldsymbol x}-\mathbf{s}|}\frac{\boldsymbol{\mathscr{B}}^{(\ell)}(\tilde{\boldsymbol x},\mathbf{s},\frac{\tilde{k}\tau}{c_p})}{4\pi(\tilde{k}\tau)^2|\tilde{\boldsymbol x}-\mathbf{s}|},
\end{align*}
where $\boldsymbol{\mathscr{A}}^{(\ell)}(\tilde{\boldsymbol x},\mathbf{s},\frac{\tilde{k}\tau}{c_s})=(\mathscr{A}^{(\ell)}_{ij})^3_{i,j=1},\boldsymbol{\mathscr{B}}^{(\ell)}(\tilde{\boldsymbol x},\mathbf{s},\frac{\tilde{k}\tau}{c_p})=(\mathscr{B}^{(\ell)}_{ij})^3_{i,j=1},\ell=1,2,3$ are $3\times3$ matrices with entries
\begin{align*}
\mathscr{A}^{(\ell)}_{\ell\ell}=&\frac{\tilde{x}_\ell-s_\ell}{|\tilde{\boldsymbol x}-\mathbf{s}|^6}(-15(\tilde{x}_\ell-s_\ell)^2+15\mathrm{i}\frac{\tilde{k}\tau}{c_s}(\tilde{x}_\ell-s_\ell)^2|\tilde{\boldsymbol x}-\mathbf{s}|+6(\frac{\tilde{k}\tau}{c_s})^2(\tilde{x}_\ell-s_\ell)^2|\tilde{\boldsymbol x}-\mathbf{s}|^2+9|\tilde{\boldsymbol x}-\mathbf{s}|^2\\
&-\mathrm{i}(\frac{\tilde{k}\tau}{c_s})^3(\tilde{x}_\ell-s_\ell)^2|\tilde{\boldsymbol x}-\mathbf{s}|^3-9\mathrm{i}\frac{\tilde{k}\tau}{c_s}|\tilde{\boldsymbol x}-\mathbf{s}|^3
-4(\frac{\tilde{k}\tau}{c_s})^2|\tilde{\boldsymbol x}-\mathbf{s}|^4+\mathrm{i}(\frac{\tilde{k}\tau}{c_s})^3|\tilde{\boldsymbol x}-\mathbf{s}|^5),\\
\mathscr{A}^{(\ell)}_{ii}=&\frac{\tilde{x}_\ell-s_\ell}{|\tilde{\boldsymbol x}-\mathbf{s}|^6}(-15(\tilde{x}_\ell-s_\ell)^2+15\mathrm{i}\frac{\tilde{k}\tau}{c_s}(\tilde{x}_i-s_i)^2|\tilde{\boldsymbol x}-\mathbf{s}|+6(\frac{\tilde{k}\tau}{c_s})^2(\tilde{x}_i-s_i)^2|\tilde{\boldsymbol x}-\mathbf{s}|^2+3|\tilde{\boldsymbol x}-\mathbf{s}|^2\\
&-\mathrm{i}(\frac{\tilde{k}\tau}{c_s})^3(\tilde{x}_i-s_i)^2|\tilde{\boldsymbol x}-\mathbf{s}|^3-3\mathrm{i}\frac{\tilde{k}\tau}{c_s}|\tilde{\boldsymbol x}-\mathbf{s}|^3
-2(\frac{\tilde{k}\tau}{c_s})^2|\tilde{\boldsymbol x}-\mathbf{s}|^4+\mathrm{i}(\frac{\tilde{k}\tau}{c_s})^3|\tilde{\boldsymbol x}-\mathbf{s}|^5),i\neq \ell,\\
\mathscr{A}^{(\ell)}_{i\ell}=&\frac{\tilde{x}_i-s_i}{|\tilde{\boldsymbol x}-\mathbf{s}|^4}(3-3\mathrm{i}\frac{\tilde{k}\tau}{c_s}|\tilde{\boldsymbol x}-\mathbf{s}|
-(\frac{\tilde{k}\tau}{c_s})^2|\tilde{\boldsymbol x}-\mathbf{s}|^2)\\
&+\frac{(x_i-s_i)(x_\ell-s_\ell)^2}{|\tilde{\boldsymbol x}-\mathbf{s}|^6}(-15+15\mathrm{i}\frac{\tilde{k}\tau}{c_s}|\tilde{\boldsymbol x}-\mathbf{s}|
+6(\frac{\tilde{k}\tau}{c_s})^2|\tilde{\boldsymbol x}-\mathbf{s}|^2-\mathrm{i}(\frac{\tilde{k}\tau}{c_s})^3|\tilde{\boldsymbol x}-\mathbf{s}|^3),i\neq \ell,\\
\mathscr{A}^{(\ell)}_{\ell j}=&\frac{\tilde{x}_j-s_j}{|\tilde{\boldsymbol x}-\mathbf{s}|^4}(3-3\mathrm{i}\frac{\tilde{k}\tau}{c_s}|\tilde{\boldsymbol x}-\mathbf{s}|
-(\frac{\tilde{k}\tau}{c_s})^2|\tilde{\boldsymbol x}-\mathbf{s}|^2)\\
&+\frac{(\tilde{x}_j-s_j)(\tilde{x}_\ell-s_\ell)^2}{|\tilde{\boldsymbol x}-\mathbf{s}|^6}(-15+15\mathrm{i}\frac{\tilde{k}\tau}{c_s}|\tilde{\boldsymbol x}-\mathbf{s}|
+6(\frac{\tilde{k}\tau}{c_s})^2|\tilde{\boldsymbol x}-\mathbf{s}|^2-\mathrm{i}(\frac{\tilde{k}\tau}{c_s})^3|\tilde{\boldsymbol x}-\mathbf{s}|^3),j\neq \ell, \\
\mathscr{A}^{(\ell)}_{ij}=&\frac{(\tilde{x}_i-s_i)(\tilde{x}_j-s_j)(\tilde{x}_\ell-s_\ell)}{|\tilde{\boldsymbol x}-\mathbf{s}|^6}(-15+15\mathrm{i}\frac{\tilde{k}\tau}{c_s}|\tilde{\boldsymbol x}-\mathbf{s}|
+6(\frac{\tilde{k}\tau}{c_s})^2|\tilde{\boldsymbol x}-\mathbf{s}|^2-\mathrm{i}(\frac{\tilde{k}\tau}{c_s})^3|\tilde{\boldsymbol x}-\mathbf{s}|^3),\\
&i\neq j,i\neq \ell,j\neq \ell,
\end{align*}
and
\begin{align*}
\mathscr{B}^{(\ell)}_{\ell\ell}=&-\frac{\tilde{x}_\ell-s_\ell}{|\tilde{\boldsymbol x}-\mathbf{s}|^6}(-15(\tilde{x}_\ell-s_\ell)^2+15\mathrm{i}\frac{\tilde{k}\tau}{c_p}(\tilde{x}_\ell-s_\ell)^2|\tilde{\boldsymbol x}-\mathbf{s}|+6(\frac{\tilde{k}\tau}{c_p})^2(\tilde{x}_\ell-s_\ell)^2|\tilde{\boldsymbol x}-\mathbf{s}|^2+9|\tilde{\boldsymbol x}-\mathbf{s}|^2\\
&-\mathrm{i}(\frac{\tilde{k}\tau}{c_p})^3(\tilde{x}_\ell-s_\ell)^2|\tilde{\boldsymbol x}-\mathbf{s}|^3-9\mathrm{i}\frac{\tilde{k}\tau}{c_p}|\tilde{\boldsymbol x}-\mathbf{s}|^3
-3(\frac{\tilde{k}\tau}{c_p})^2|\tilde{\boldsymbol x}-\mathbf{s}|^4),\\
\mathscr{B}^{(\ell)}_{ii}=&-\frac{\tilde{x}_\ell-s_\ell}{|\tilde{\boldsymbol x}-\mathbf{s}|^6}(-15(\tilde{x}_\ell-s_\ell)^2+15\mathrm{i}\frac{\tilde{k}\tau}{c_p}(\tilde{x}_i-s_i)^2|\tilde{\boldsymbol x}-\mathbf{s}|+6(\frac{\tilde{k}\tau}{c_p})^2(\tilde{x}_i-s_i)^2|\tilde{\boldsymbol x}-\mathbf{s}|^2+3|\tilde{\boldsymbol x}-\mathbf{s}|^2\\
&-\mathrm{i}(\frac{\tilde{k}\tau}{c_p})^3(\tilde{x}_i-s_i)^2|\tilde{\boldsymbol x}-\mathbf{s}|^3-3\mathrm{i}\frac{\tilde{k}\tau}{c_p}|\tilde{\boldsymbol x}-\mathbf{s}|^3
-(\frac{\tilde{k}\tau}{c_p})^2|\tilde{\boldsymbol x}-\mathbf{s}|^4),i\neq \ell,\\
\mathscr{B}^{(\ell)}_{i\ell}=&-\frac{\tilde{x}_i-s_i}{|\tilde{\boldsymbol x}-\mathbf{s}|^4}(3-3\mathrm{i}\frac{\tilde{k}\tau}{c_p}|\tilde{\boldsymbol x}-\mathbf{s}|
-(\frac{\tilde{k}\tau}{c_p})^2|\tilde{\boldsymbol x}-\mathbf{s}|^2)\\
&-\frac{(\tilde{x}_i-s_i)(\tilde{x}_\ell-s_\ell)^2}{|\tilde{\boldsymbol x}-\mathbf{s}|^6}(-15+15\mathrm{i}\frac{\tilde{k}\tau}{c_p}|\tilde{\boldsymbol x}-\mathbf{s}|
+6(\frac{\tilde{k}\tau}{c_p})^2|\tilde{\boldsymbol x}-\mathbf{s}|^2-\mathrm{i}(\frac{\tilde{k}\tau}{c_p})^3|\tilde{\boldsymbol x}-\mathbf{s}|^3),i\neq \ell, \\
\mathscr{B}^{(\ell)}_{\ell j}=&-\frac{\tilde{x}_j-s_j}{|\tilde{\boldsymbol x}-\mathbf{s}|^4}(3-3\mathrm{i}\frac{\tilde{k}\tau}{c_p}|\tilde{\boldsymbol x}-\mathbf{s}|
-(\frac{\tilde{k}\tau}{c_p})^2|\tilde{\boldsymbol x}-\mathbf{s}|^2)\\
&-\frac{(\tilde{x}_j-s_j)(\tilde{x}_\ell-s_\ell)^2}{|\tilde{\boldsymbol x}-\mathbf{s}|^6}(-15+15\mathrm{i}\frac{\tilde{k}\tau}{c_p}|\tilde{\boldsymbol x}-\mathbf{s}|
+6(\frac{\tilde{k}\tau}{c_p})^2|\tilde{\boldsymbol x}-\mathbf{s}|^2-\mathrm{i}(\frac{\tilde{k}\tau}{c_p})^3|\tilde{\boldsymbol x}-\mathbf{s}|^3),j\neq \ell, \\
\mathscr{B}^{(\ell)}_{ij}=&-\frac{(\tilde{x}_i-s_i)(\tilde{x}_j-s_j)(\tilde{x}_\ell-s_\ell)}{|\tilde{\boldsymbol x}-\mathbf{s}|^6}(-15+15\mathrm{i}\frac{\tilde{k}\tau}{c_p}|\tilde{\boldsymbol x}-\mathbf{s}|
+6(\frac{\tilde{k}\tau}{c_p})^2|\tilde{\boldsymbol x}-\mathbf{s}|^2-\mathrm{i}(\frac{\tilde{k}\tau}{c_p})^3|\tilde{\boldsymbol x}-\mathbf{s}|^3),\\
&i\neq j,i\neq \ell,j\neq \ell.
\end{align*}
\end{lemm}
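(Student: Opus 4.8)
The plan is to establish both identities by differentiating the Kupradze matrix \eqref{Fundamental2} directly; no spectral input is needed. Set $r:=|\tilde{\boldsymbol x}-\mathbf{s}|$ and, for $\sharp\in\{s,p\}$, write $\alpha_\sharp:=\tilde{k}\tau/c_\sharp$, so that replacing $\tilde{k}$ by $\tilde{k}\tau$ in \eqref{Fundamental2} yields
\begin{align*}
\boldsymbol\Gamma^{\tilde{k}\tau}(\tilde{\boldsymbol x},\mathbf{s})=-\frac{e^{\mathrm{i}\alpha_s r}}{4\pi\mu r}\boldsymbol{\mathcal{I}}+\frac{1}{4\pi(\tilde{k}\tau)^2}\nabla\nabla\Big(\frac{e^{\mathrm{i}\alpha_p r}-e^{\mathrm{i}\alpha_s r}}{r}\Big),
\end{align*}
all derivatives being taken with respect to $\tilde{\boldsymbol x}$. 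First I would record the elementary chain-rule identities $\partial_j r=(\tilde{x}_j-s_j)/r$ and $\partial_i\partial_j r=\delta_{ij}/r-(\tilde{x}_i-s_i)(\tilde{x}_j-s_j)/r^3$, and then carry out the one genuinely nontrivial computation at this level, namely
\begin{align*}
\partial_i\partial_j\Big(\frac{e^{\mathrm{i}\alpha r}}{r}\Big)=\frac{e^{\mathrm{i}\alpha r}}{r^5}\Big(\big(3-3\mathrm{i}\alpha r-\alpha^2 r^2\big)(\tilde{x}_i-s_i)(\tilde{x}_j-s_j)-\big(1-\mathrm{i}\alpha r\big)\delta_{ij}r^2\Big),
\end{align*}
obtained by differentiating $e^{\mathrm{i}\alpha r}/r$ twice in $r$ and inserting the identities above.

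Next I would assemble the two stated contributions. The $e^{\mathrm{i}\alpha_p r}$ part of $\boldsymbol\Gamma^{\tilde{k}\tau}$ comes solely from the $e^{\mathrm{i}\alpha_p r}/r$ term of the Hessian, while the $e^{\mathrm{i}\alpha_s r}$ part collects the scalar term $-e^{\mathrm{i}\alpha_s r}/(4\pi\mu r)\boldsymbol{\mathcal{I}}$ together with the $-e^{\mathrm{i}\alpha_s r}/r$ term of the Hessian; using $\mu=c_s^2$, so that $(\tilde{k}\tau)^2/\mu=\alpha_s^2$, the scalar term supplies precisely the $\alpha_s^2 r^4$ summand in the diagonal entries (and nothing off the diagonal, which is why $\boldsymbol{\mathcal{B}}$ carries no analogous $\alpha_p^2 r^4$ term). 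Factoring $-e^{\mathrm{i}\alpha_\sharp r}/(4\pi(\tilde{k}\tau)^2 r)$ out of each group and distinguishing $i=j$ from $i\neq j$ then reads off $\boldsymbol{\mathcal{A}}$ and $\boldsymbol{\mathcal{B}}$ exactly as stated. For the gradient I would simply apply $\partial_\ell$ to the closed form just obtained, i.e. to a sum of products of $e^{\mathrm{i}\alpha_\sharp r}$, negative powers of $r$, and monomials in the components $\tilde{x}_i-s_i$; the same three elementary rules apply, and one splits the result according to the coincidence pattern among $i,j,\ell$ (the cases $i=j=\ell$, $i=j\neq\ell$, $\ell=i\neq j$, $\ell=j\neq i$, and $i,j,\ell$ pairwise distinct), noting that the new $\alpha_\sharp^3$ terms arise only from $\partial_\ell$ hitting the exponential in the $\alpha_\sharp^2$ terms. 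Collecting terms yields $\boldsymbol{\mathscr{A}}^{(\ell)}$ and $\boldsymbol{\mathscr{B}}^{(\ell)}$.

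I expect the main obstacle to be purely one of bookkeeping rather than of mathematical substance: the number of terms is large, the relative signs between the $s$- and $p$-contributions must be tracked carefully through the cancellation against the identity term, and the case analysis on which of $i,j,\ell$ coincide is what makes $\boldsymbol{\mathscr{A}}^{(\ell)},\boldsymbol{\mathscr{B}}^{(\ell)}$ so bulky. To keep this manageable I would first compute the scalar derivatives of $r\mapsto e^{\mathrm{i}\alpha r}/r$ up to third order as functions of $r$ alone, and only afterwards substitute $\partial_\ell r$ and expand everything into Cartesian components, so that the case distinctions enter only at the very last step.
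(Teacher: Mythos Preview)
Your proposal is correct and is precisely the approach the paper takes: its entire proof reads ``By formula \eqref{Fundamental2}, it is direct to obtain the conclusion of the lemma.'' Your plan simply makes this direct differentiation explicit, and the organizational device you suggest (compute the radial derivatives of $e^{\mathrm{i}\alpha r}/r$ first, then substitute $\partial_\ell r$ and sort by the coincidence pattern of $i,j,\ell$) is exactly the right way to keep the bookkeeping under control.
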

\begin{proof}
By formula \eqref{Fundamental2}, it is direct to obtain the conclusion of the lemma.
\end{proof}

%-----------------------------------------------------------------------------------------------
%-----------------------------------------------------------------------------------------------
%\bibliographystyle{abbrv}
%%\bibliographystyle{elsarticle-num}
%%
%%%\bibliographystyle{UNRT}
%%%\bibliographystyle{IEEEtran}
%%\bibliographystyle{IEEEabrv,REFS}
%%%\bibliography{spbasic}
%\bibliography{CGL}

\end{document}